\documentclass[11pt]{article}

\setlength{\topmargin}{10mm} \setlength{\headheight}{10mm}
\setlength{\headsep}{8mm} \setlength{\textheight}{210mm}
 \setlength{\textwidth}{155mm}
\setlength{\oddsidemargin}{5mm}
 \setlength{\evensidemargin}{5mm}
\setlength{\parskip}{1.ex plus0.5ex minus0.5ex}

\usepackage{amsmath,amsthm,amssymb,graphics}
\usepackage{mathrsfs}
\usepackage{fullpage}
\usepackage{amsfonts}
\usepackage{epsfig}
\usepackage{color}
\newtheorem{thm}{Theorem}[section]
\newtheorem{cor}[thm]{Corollary}
\newtheorem{prop}[thm]{Proposition}
\newtheorem{lem}[thm]{Lemma}
\theoremstyle{definition}
\newtheorem{defn}[thm]{Definition}

\newtheorem{rem}[thm]{Remark}
\theoremstyle{remark} \numberwithin{equation}{section}

\newcommand{\R}{\mathbb{R}}

\newcommand{\Z}{\mathbb{Z}}
\newcommand{\mathsym}[1]{{}}

\begin{document}
\setcounter{page}{1}

\title{\textbf{Existence and asymptotic properties for the solutions to nonlinear SFDEs driven by G-Brownian
motion with infinite delay}}
\author{\textbf{Faiz Faizullah$^{\footnote{E-mail:
faiz \b{} math@yahoo.com/faiz \b{}
math@ceme.nust.edu.pk/g.a.faizullah@swansea.ac.uk}}$}
 \vspace{0.1cm}\\
 {Department of Mathematics, Swansea University, Singleton Park SA2
8PP UK}
 \\{Department of BS and H, College of E and ME, National
University}\\{ of Sciences and Technology (NUST) Pakistan}}
\maketitle
\begin{abstract} The aim of this paper is to present the analysis for the
solutions of nonlinear stochastic functional differential equation
driven by G-Brownian motion with infinite delay (G-SFDEwID). Under
some useful assumptions, we have proved that the G-SFDEwID admits a
unique local solution. The mentioned theory has been further
generalized to show that G-SFDEwID admits a unique strong global
solution. The asymptotic properties, mean square boundedness and
convergence of solutions with different initial data have been
derived. We have assessed that the solution map $X_t$ is mean square
bounded and two solution maps from different initial data are
convergent. In addition, the exponential estimate for the solution
has been studied.

\textbf{Key words:~~~} Existence, uniqueness, local and global
solutions, boundedness, convergence, exponential estimate, solution
maps, stochastic functional differential equations, G-Brownian
motion.
\end{abstract}

\section{Introduction}
The stochastic dynamical systems, in which the future state of the
systems not only relies on the current state but also on its past
history, lead to stochastic functional differential equations with
delays. These equations have tremendous applications in diverse
areas of sciences and engineering such as population dynamics
\cite{bm,myz}, epidemiology \cite{bg}, gene expression \cite{mpbf},
financial assets \cite{cy,t,z} and neural networks \cite{lf}. There
is by now a rather comprehensive mathematical literature on
existence, uniqueness, stability, moment estimates and other related
results of solutions for stochastic functional differential
equations \cite{m,m1,mc,m2,ms}. In the framework of G-Brownian
motion, the existence and uniqueness theorem for solutions to
stochastic functional differential equations with infinite delay has
been given by Ren, Bi and Sakthivel. Under the linear growth and
Lipschitz conditions, they have used the Picard approximation
technique \cite{rbs} while Faizullah has used the Cauchy-Maruyama
approximation scheme \cite{f6} to develop the mentioned theory. The
idea has been extended to non-Lipschitz conditions by Faizullah to
prove the existence-uniqueness theorem \cite{f2} and $p$th moment
estimates for the solutions to these equations \cite{f3,f4}.
Recently, Faizullah et. al, \cite{fm,fb} has generalized the theory
to determine existence, stability and the $p$th moment estimates for
solutions to neutral stochastic functional differential equations in
the G-framework (G-SFDEs). However, to the best of our knowledge, no
literature can be found on stochastic functional differential
equations driven by G-Brownian motion with infinite delay
(G-SFDEwID) in the phase space $C_q((-\infty,0];\R^d)$ defined
below. This article will contribute to fill the mentioned gap. By
using the truncation method, the global strong solutions for
G-SFDEwID will be explored. Furthermore, this article will present a
systematic study of the asymptotic properties for the solutions as
well as solution maps for G-SFDEwID. The $L^2_G$ and exponential
estimates will also be investigated. Let $\R^d$ and $A^\tau$ denote
$d$-dimensional Euclidean space and transpose of a matrix or vector
$A$ respectively. Let $C((-\infty,0];\R^d)$ be the collection of
continuous functions from $(-\infty,0]$ to $\R^d$, then for a given
number $q>0$ we define the phase space with the fading memory
$C_q((-\infty,0];\R^d)$ by
\begin{equation*} C_q((-\infty,0];\R^d)=\{\psi\in C((-\infty,0];\R^d):\lim_{\alpha\rightarrow -\infty}e^{q\alpha}\psi(\alpha)\, \text{exists in}\, \R^d
\}.
\end{equation*}
The space $C_q((-\infty,0];\R^d)$ is complete with norm
$\|\psi\|_q=\sup_{-\infty<\alpha\leq
0}e^{q\alpha}|\psi(\alpha)|<\infty$. This is a Banach space of
continuous and bounded functions and for any $0< q_1\leq
q_2<\infty$, $C_{q_1}\subseteq C_{q_2}$ \cite{ks,wym}. Let
$\mathcal{B}(C_q)$ be the $\sigma$-algebra generated by $C_q$ and
$C_q^0=\{\psi\in C_q:
\lim_{\alpha\rightarrow-\infty}e^{q\alpha}\psi(\alpha)=0\}$. Denote
by $L^2(C_q)$ (resp. $L^2(C^0_q)$) the space of all
$\mathcal{F}$-measurable $C_q$-valued (resp. $C^0_q$-valued)
stochastic processes $\psi$ such that $E\|\psi\|_q^2<\infty$. Let
$(\Omega,\mathcal{F},\mathbb{P})$ be a complete probability space,
$B(t)$ be a $d$-dimensional G-Brownian motion and
$\mathcal{F}_t=\sigma\{B(s):0\leq s\leq t\}$ be the natural
filtration. Let the filtration $\{\mathcal{F};t\geq0\}$ satisfies
the usual conditions. Let $\mathcal{P}$ be the collection of all
probability measures on $(C_q,\mathcal{B}(C_q))$ and $L_b(C_q)$ be
the set of all bounded continuous functionals. Let $N_0$ be the set
of probability measures on $(-\infty,0]$ such that for any $\mu\in
N_0$, $\int_{-\infty}^0\mu(d\alpha)=1$. For any $m>0$ we define
$N_m$ by
\begin{equation*} N_m=\{\mu\in N_0:\mu^{(m)}=\int_{-\infty}^0 e^{-m\alpha}\mu(d\alpha)<\infty
\},
\end{equation*}
where for any $m\in(0,m_0)$, $N_{m_0}\subset N_{m}\subset N_{0}$
\cite{wym}. Let $g: C_q((-\infty,0];\R^d)\rightarrow\R^d$, $h:
C_q((-\infty,0];\R^{d})\rightarrow\R^{d\times m}$ and
$\gamma:C_q((-\infty,0];\R^d)\rightarrow\R^{d\times m}$ be Borel
measurable. Consider the following stochastic functional
differential equation driven by G-Brownian motion with infinite
delay
\begin{equation}\label{1} dX(t) = g ( X_{t}) dt + h ( X_{t}) d \langle B, B
\rangle (t) + \gamma ( X_{t}) dB (t),
\end{equation}
on $t\geq 0$ with the given initial data $X_0=\zeta\in
C_q((-\infty,0];\R^{d})$ and $ X_{t} = \{ X(t + \alpha) : -\infty <
\alpha \leq 0\}$.
\begin{defn} A continuous $\R^d$-valued and $\mathcal{F}_t$ adapted
process $X(t)$, $-\infty <t< \theta_e$ is called a local strong
solution of problem \eqref{1} with initial data $\zeta\in
C_q((-\infty,0];\R^{d})$ if $X(t)=\zeta(t)$ on $-\infty <t\leq 0$
and for all $t\geq 0$,
\begin{equation*} X(t) =\zeta(0)+ \int_0^{t\wedge\theta_m}g ( X_{s}) ds + \int_0^{t\wedge\theta_m}h ( X_{s}) d \langle B, B
\rangle (s) + \int_0^{t\wedge\theta_m}\gamma ( X_{s}) dB (s),
\end{equation*}
holds q.s. for each $m\geq1$, where $\{\theta_m; m\geq1\}$ is a
nondecreasing sequence of stopping times such that
 $\theta_m\rightarrow\theta_e$ quasi-surely as $m\rightarrow\infty$.
\end{defn}
In addition, if $\lim\sup_{t\rightarrow\theta_e}|X(t)|=\infty$ holds
q.s. when $\theta_e<\infty$ q.s., then $X(t)$, $-\infty <t<
\theta_e$ is called a maximal local strong solution and $\theta_e$
is called the explosion time. If $\theta_e=\infty$, then it is
called a global solution. A maximal local strong solution $X(t)$,
$-\infty <t< \theta_e$ is said to be unique if for any other maximal
local strong solution $Y(t)$, $-\infty <t< \hat{\theta}_e$, we have
$\theta_e=\hat{\theta}_e$ and $X(t)=Y(t)$ for $-\infty <t< \theta_e$
quasi-surely. The rest of the paper is organized as follows. Section
2 is devoted to some basic concepts required for the subsequent
sections of this paper. Section 3 introduces the existence and
uniqueness theory of local and global solutions for stochastic
functional differential equations driven by G-Brownian motion with
infinite delay. Section 4 describes that G-SFDEwID has a bounded
solution. Moreover, it shows that two solutions of G-SFDEwID with
distinct initial data converge. Section 5 studies the asymptotic
properties such as boundedness and convergence of the solutions map
$X_t$ of G-SFDEwID. The $L^2_G$ and exponential estimates are
included in section 6.
\section{Preliminaries}
Building on the concepts of G-Brownian motion theory, this section
includes the basic notions, results and definitions needed for the
further study of the subject. For more details on the concepts
briefly discussed, readers are refer to the papers
\cite{bl,hp,lp,lw,p1,p2,s,rf}. Let $\Omega$ be a given basic
non-empty set. Assume $\mathcal {H}$ be a space of real functions
defined on $\Omega$. Then $(\Omega,\mathcal {H},\hat{\mathbb{E}})$
is a sublinear expectation space, where $\hat{\mathbb{E}}$ is a
sub-expectation defined as the following.
\begin{defn} A functional $\hat{\mathbb{E}}:\mathcal {H}\rightarrow \R$ satisfying
the following four characteristics is known as a sub-expectation.
Let $X,Y\in \mathcal {H}$, then
\begin{itemize}
 \item[${\bf(1)}$] Monotonicity:
$\hat{\mathbb{E}}[X]\geq\hat{\mathbb{E}}[Y]$ if  $X\geq Y$.
\item[${\bf(2)}$] Constant preservation:  $\hat{\mathbb{E}}[K]=K$, for all $K\in\R$.
\item[${\bf(3)}$] Positive homogeneity:
$\hat{\mathbb{E}}[\alpha X]=\alpha \hat{\mathbb{E}}[X]$, for all
$\alpha\in\R^+$.
\item[${\bf(4)}$] Sub-additivity:  $
\hat{\mathbb{E}}[X]+\hat{\mathbb{E}}[Y]\geq \hat{\mathbb{E}}[X+Y]$.
\end{itemize}
\end{defn}
Let $\hat{\mathbb{E}}[Y]=\hat{\mathbb{E}}[-Y]=0$, $K\in\R$ and
$\alpha\in\R^+$ then $\hat{\mathbb{E}}[K+\alpha
Y+X]=K+\hat{\mathbb{E}}[X]$. Furthermore, assume that $\Omega$ be
the space of all $\R^d$-valued continuous paths $(w(t))_{t\geq 0}$
starting from zero equipped with the norm
\begin{equation*}\rho(w^1,w^2)=\sum_{i=1}^{\infty}\frac{1}{2^i}\Big(\max_{t\in[0,i]}|w^1(t)-w^2(t)|\wedge1\Big),\end{equation*}
then for any fixed $T\in[0,\infty)$,
\begin{equation*}L^0_{ip}(\Omega_T)=\Big\{\phi(B(t_1),B(t_2),...,B(t_d)):d\geq1,t_{1},t_{2},...,t_{d}\in[0,T],\phi\in
C_{b.Lip}(\R^{d\times n}))\Big\},\end{equation*} where
$C_{b.Lip}(\R^{d})$ is a space of bounded Lipschitz functions, for
$w\in\Omega$, $t\geq 0$, $B(t)=B(t,w)=w(t)$ is the canonical
process, $L^0_{ip}(\Omega_t)\subseteq L^0_{ip}(\Omega_T)$ for $t\leq
 T$ and
$L^0_{ip}(\Omega)=\cup_{n=1}^{\infty}L^0_{ip}(\Omega_n)$. The
completion of $L^0_{ip}(\Omega)$ under the
 Banach norm $\hat{\mathbb{E}}[|.|^p]^{\frac{1}{p}}$, $p\geq 1$ is denoted by
 $L^p_{G}(\Omega)$, where
$L_{G}^p(\Omega_t)\subseteq L_{G}^p(\Omega_T)\subseteq
L_{G}^p(\Omega)$ for $0\leq t\leq T <\infty.$ Generated by the
canonical process $\{B(t)\}_{t\geq 0}$, the filtration is given by
$\mathcal {F}_t=\sigma\{B(s), 0\leq s \leq t\}$, $\mathcal
{F}=\{\mathcal {F}_t\}_{t\geq 0}$. Let $\pi_T=\{t_0,t_1,...,t_N\}$,
$0\leq t_0\leq t_1\leq...\leq t_N\leq\infty$ be a partition of
$[0,T].$ Choose $p\geq 1,$ let $M^{p,0}_G(0,T)$ denotes a collection
of the following type processes
\begin{equation}\label{p1}\eta_t(w)=\sum_{i=0}^{N-1}\xi_i(w)I_{[t_i,t_{i+1}]}(t),\end{equation}
where $\xi_i\in L_G^p(\Omega_{t_{i}})$, $i=0,1,...,N-1$. Moreover,
the completion of $M_{G}^{p,0}(0,T)$ with the norm given below is
denoted by $M_{G}^{p}(0,T),$ $p\geq 1$
\begin{equation*}\|\eta\|=\Big\{\int_0^T\hat{\mathbb{E}}[|\eta_s|^p]ds\Big\}^{1/p}.\end{equation*}
\begin{defn}
A d-dimensional stochastic process $\{B(t)\}_{t\geq 0}$ satisfying
the following features is called a G-Brownian motion
\begin{itemize}
\item[${\bf(1)}$] $B(0) = 0.$
\item[${\bf(2)}$] The increment $B(t+s) - B(t)$ is
$N(0,[s\underline{\sigma}^2,s\bar{\sigma}^2])$-distributed.
\item[${\bf(3)}$] The increment $B({t+s}) - B(t)$ is independent of
$B({t_1}) ,B({t_2}), ........B({t_d}),$  for every $d \in \Z^+$
 and $0\leq t_1\leq t_2\leq ,...,
\leq t_d \leq t.$
\end{itemize}
\end{defn}
\begin{defn} Let $\eta_t\in
M_G^{2,0}(0,T)$ be given by \eqref{p1}. Then the G-It\^{o}'s
integral $I(\eta)$ is defined by
\begin{align*}
I(\eta)=\int_0^T\eta(s)dB^a(s)=\sum_{i=0}^{N-1}\xi_i\Big(B^a({t_{i+1}})-B^a({t_i})\Big).\end{align*}
A mapping $I:M^{2,0}_{G}(0,T)\mapsto L^2_G(\mathcal{F}_T)$  can be
continuously extended to $I:M^2_G(0,T)\mapsto L^2_G(\mathcal{F}_T)$
and for $\eta\in M^2_G(0,T)$ the G-It\^o integral is still defined
by
\begin{align*}
\int_0^T\eta(s)dB^a(s)=I(\eta).\end{align*}
\end{defn}
\begin{defn} The G-quadratic variation process $\{\langle
B^a\rangle(t)\}_{t\geq0}$ of G-Brownian motion is defined by
\begin{align*}\begin{split}&
\langle
B^a\rangle(t)=\lim_{N\rightarrow\infty}\sum_{i=0}^{N-1}\Big(B^a({t_{i+1}^N})-B^a({t_{i}^N})\Big)^2={B^a(t)}^2-2\int_0^tB^a(s)dB^a(s),\\&
\end{split}\end{align*}
which is an increasing process with $\langle B^a\rangle(0)=0$ and
for any $0\leq s\leq t$,
\begin{align*} \langle B^a\rangle(t)-\langle B^a\rangle(s)\leq
\sigma_{aa^\tau} (t-s).
\end{align*}
\end{defn}
Assume that $a, \hat{a} \in\R^d$ be two given vectors. Then the
mutual variation process of $B^a$ and $B^{\hat{a}}$ is defined by
$\langle B^a,B^{\hat{a}}\rangle=\frac{1}{4}[\langle
B^a+B^{\hat{a}}\rangle(t)-\langle B^a-B^{\hat{a}}\rangle(t)]$. A
mapping $H_{0,T}:M^{0,1}_{G}(0,T)\mapsto L^2_G(\mathcal{F}_T)$  is
defined by
\begin{align*} H_{0,T}(\eta)=\int_0^T\eta(s)d\langle B^a\rangle(s)= \sum_{i=0}^{N-1}\xi_i\Big(\langle
B^a\rangle_({t_{i+1}})-\langle B^a\rangle({t_{i}})\Big),
\end{align*}
which can be continuously extended to $M^1_G(0,T)$ and for $\eta\in
M^1_G(0,T)$ this is still denoted by
\begin{align*}
\int_0^T\eta(s)d\langle B^a\rangle(s)=H_{0,T}(\eta).
\end{align*}
The G-It\^{o} integral and its quadratic variation process satisfies
the following properties \cite{p3,wzl}.
\begin{prop}
\begin{itemize}
\item[${\bf(1)}$] $\hat{\mathbb{E}}[\int_0^T\eta(s)dB(s)]=0,\,\,\, \textit{for all}\,\,\, \eta\in M^p_G(0,T)$.
\item[${\bf(2)}$]
$\hat{\mathbb{E}}[(\int_0^T\eta(s)dB(s))^2]=\hat{\mathbb{E}}[\int_0^T\eta^2(s)\langle
B, B\rangle(t)]\leq \bar{\sigma}^2
\mathbb{E}[\int_0^T\eta^2(s)dt],\,\,\, \textit{for all}\,\,\,
\eta\in M^2_G(0,T)$.
\item[${\bf(3)}$] $\hat{\mathbb{E}}[\int_0^T|\eta(s)|^pdt]\leq \int_0^T\hat{\mathbb{E}}|\eta(s)|^pdt,\,\,\,
\textit{for all}\,\,\, \eta\in M^p_G(0,T)$.
\end{itemize}
\end{prop}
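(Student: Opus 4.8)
The three assertions are the standard zero-mean, isometry, and Fubini-type estimates for the G-It\^o integral, and the natural route is to verify them first on the dense class of elementary processes and then pass to the limit. I would fix a step process $\eta_t=\sum_{i=0}^{N-1}\xi_i I_{[t_i,t_{i+1}]}(t)$ with $\xi_i\in L^p_G(\Omega_{t_i})$, write $\Delta B_i:=B(t_{i+1})-B(t_i)$, and base every computation on the single structural fact that each $\Delta B_i$ is independent of $\mathcal{F}_{t_i}$ and, by the $N(0,[s\underline{\sigma}^2,s\bar{\sigma}^2])$-law in the definition of G-Brownian motion, satisfies $\hat{\mathbb{E}}[\Delta B_i]=\hat{\mathbb{E}}[-\Delta B_i]=0$. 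The engine of the argument is the \emph{absorption identity} $\hat{\mathbb{E}}[X+\xi\,\Delta B_i]=\hat{\mathbb{E}}[X]$ valid for $\mathcal{F}_{t_i}$-measurable $X,\xi$, which isolates the centred independent increment; the final extension from $M^{p,0}_G(0,T)$ to $M^p_G(0,T)$ in each case will be routine, using the continuity of the extended integral map $I$ together with the contraction $|\hat{\mathbb{E}}[X]-\hat{\mathbb{E}}[Y]|\le\hat{\mathbb{E}}[|X-Y|]$.

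For (1) I would argue by backward induction on $N$: writing $I(\eta)=S+\xi_{N-1}\Delta B_{N-1}$ with $S$ being $\mathcal{F}_{t_{N-1}}$-measurable, the absorption identity gives $\hat{\mathbb{E}}[I(\eta)]=\hat{\mathbb{E}}[S]$, and iterating peels off every increment to leave $\hat{\mathbb{E}}[I(\eta)]=0$. For (2) I would expand $I(\eta)^2=\sum_i\xi_i^2(\Delta B_i)^2+2\sum_{i<j}\xi_i\xi_j\,\Delta B_i\,\Delta B_j$. Conditioning on the larger index and reusing the absorption identity kills all off-diagonal terms under $\hat{\mathbb{E}}$. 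For the diagonal terms I would replace $(\Delta B_i)^2$ by the quadratic-variation increment $\Delta\langle B\rangle_i=\langle B\rangle(t_{i+1})-\langle B\rangle(t_i)$, exactly as in the definition of the G-quadratic variation process; this rewrites $\hat{\mathbb{E}}[I(\eta)^2]$ as $\hat{\mathbb{E}}[\int_0^T\eta^2(s)\,d\langle B\rangle(s)]$, yielding the claimed equality, after which the bound $\langle B\rangle(t)-\langle B\rangle(s)\le\bar{\sigma}^2(t-s)$ recorded in that same definition produces the inequality $\le\bar{\sigma}^2\hat{\mathbb{E}}[\int_0^T\eta^2(s)\,ds]$.

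For (3), which uses no probabilistic structure at all, I would rely only on the sublinearity of $\hat{\mathbb{E}}$: approximating $\int_0^T|\eta(s)|^p\,ds$ by Riemann sums $\sum_i|\eta(s_i)|^p\Delta s_i$ and applying sub-additivity and positive homogeneity (properties (4) and (3) of the sub-expectation) gives $\hat{\mathbb{E}}[\sum_i|\eta(s_i)|^p\Delta s_i]\le\sum_i\hat{\mathbb{E}}[|\eta(s_i)|^p]\Delta s_i$, and letting the mesh tend to zero yields the stated Fubini-type inequality.

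The step I expect to be the main obstacle is the rigorous justification of the absorption identity under the sublinear expectation. The elementary version recorded just after the definition of the sub-expectation, namely $\hat{\mathbb{E}}[K+\alpha Y+X]=K+\hat{\mathbb{E}}[X]$, only covers a \emph{constant} coefficient $\alpha\in\R^+$, whereas here the coefficients $\xi_i$ are genuinely random and $\hat{\mathbb{E}}$ is nonlinear, so the increment cannot simply be pulled out by taking its mean inside. Establishing $\hat{\mathbb{E}}[X+\xi\,\Delta B_i]=\hat{\mathbb{E}}[X]$ therefore requires the conditional G-expectation and the independence property of the definition of G-Brownian motion, and this same delicate point must be invoked once for a single increment in (1) and again for products of increments in (2); once it is secured, the remaining density and limit arguments are standard.
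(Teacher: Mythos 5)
The paper contains no proof of this proposition to compare against: the three properties are simply quoted from the cited references \cite{p3,wzl}, so your proposal must be judged against the standard argument in those sources, and it reconstructs that argument essentially correctly. The plan --- verify the claims on step processes in $M^{p,0}_G(0,T)$, then extend by density using the contraction $|\hat{\mathbb{E}}[X]-\hat{\mathbb{E}}[Y]|\le\hat{\mathbb{E}}[|X-Y|]$ and the continuity of $I$ --- is exactly Peng's route, and you have correctly located the crux: the absorption identity $\hat{\mathbb{E}}[X+\xi\,\Delta B_i]=\hat{\mathbb{E}}[X]$ does not follow from the constant-coefficient identity recorded after the definition of the sub-expectation, but requires the conditional G-expectation, independence of the increment, and $\hat{\mathbb{E}}[\Delta B_i]=\hat{\mathbb{E}}[-\Delta B_i]=0$. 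Two refinements are needed to make your sketch of (2) airtight. First, since $\hat{\mathbb{E}}$ is only sub-additive you may not kill the off-diagonal terms one at a time; the correct bookkeeping is backward induction in which all terms containing the highest-index increment are grouped into a single block, writing $I(\eta)^2=S^2+2S\xi_{N-1}\,\Delta B_{N-1}+\xi_{N-1}^2(\Delta B_{N-1})^2$ with $S=\sum_{i<N-1}\xi_i\Delta B_i$ being $\mathcal{F}_{t_{N-1}}$-measurable, and absorbing the whole middle block at once under $\hat{\mathbb{E}}_{t_{N-1}}$. Second, the substitution of $\Delta\langle B\rangle_i$ for $(\Delta B_i)^2$ is itself an absorption step, not a pointwise identity: their difference equals $2\int_{t_i}^{t_{i+1}}(B(s)-B(t_i))\,dB(s)$, a symmetric G-martingale increment, so you need the identity in the stronger form $\hat{\mathbb{E}}[X+Y]=\hat{\mathbb{E}}[X]$ for any $Y$ with $\hat{\mathbb{E}}_{t_i}[Y]=\hat{\mathbb{E}}_{t_i}[-Y]=0$, not only for $Y=\xi\,\Delta B_i$. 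Parts (1) and (3) are fine as you describe them: (1) is the backward-induction absorption, and (3) uses nothing beyond sub-additivity, positive homogeneity, monotonicity, and density of step processes. With these two points made explicit, your outline is precisely the proof found in the cited literature.
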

The concept of G-capacity and lemma \ref{l2} can be found in
\cite{dhp}.
\begin{defn}
Let $\mathcal {B}(\Omega)$ be a Borel $\sigma$-algebra of $\Omega$
and $\mathcal{P}$ be a collection of all probability measures on
$(\Omega, \mathcal {B}(\Omega)$. Then the G-capacity denoted by
$\hat{C}$ is defined as the following
\begin{equation*}\hat{C}(A)=\sup_{\mathbb{P}\in\mathcal{P}}\mathbb{P}(A),\end{equation*}
where set $A\in\mathcal {B}(\Omega)$.
\end{defn}
\begin{defn}
A set $A\in\mathcal {B}(\Omega)$ is said to be polar if its capacity
is zero i.e. $\hat{C}(A)=0$ and a property holds quasi-surely (q.s)
if it holds outside a polar set.
\end{defn}
\begin{lem}\label{l2} Let $X\in L^p$ and $\hat{\mathbb{E}}|X|^p<\infty$. Then for each
$\delta>0,$ the G-Markov inequality is defined by
\begin{equation*}\hat{C}(|X|>\delta)\leq \frac{\hat{\mathbb{E}}[|X|^p]}{\delta}.\end{equation*}
\end{lem}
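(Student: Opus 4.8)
The plan is to reduce the estimate to the classical Markov inequality valid under each individual probability measure, and then to invoke the defining property of the G-capacity as a supremum over the family $\mathcal{P}$. Since by definition $\hat{C}(A)=\sup_{\mathbb{P}\in\mathcal{P}}\mathbb{P}(A)$, it is enough to bound $\mathbb{P}(|X|>\delta)$ by a quantity independent of $\mathbb{P}$ and then pass to the supremum.

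First I would fix an arbitrary $\mathbb{P}\in\mathcal{P}$ and treat $X$ as an ordinary random variable on $(\Omega,\mathcal{B}(\Omega),\mathbb{P})$. Applying the classical Markov inequality to the nonnegative variable $|X|^p$ gives
\begin{equation*}
\mathbb{P}(|X|>\delta)=\mathbb{P}(|X|^{p}>\delta^{p})\leq\frac{E_{\mathbb{P}}[|X|^{p}]}{\delta^{p}},
\end{equation*}
where $E_{\mathbb{P}}$ denotes the linear expectation under $\mathbb{P}$. Next I would remove the dependence on $\mathbb{P}$ from the right-hand side: by the Denis--Hu--Peng representation of the sublinear expectation, $\hat{\mathbb{E}}[Y]=\sup_{\mathbb{Q}\in\mathcal{P}}E_{\mathbb{Q}}[Y]$, so that $E_{\mathbb{P}}[|X|^{p}]\leq\hat{\mathbb{E}}[|X|^{p}]$ for every $\mathbb{P}\in\mathcal{P}$. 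Substituting this bound and then taking the supremum over $\mathbb{P}\in\mathcal{P}$ yields the asserted estimate $\hat{C}(|X|>\delta)\leq\hat{\mathbb{E}}[|X|^{p}]/\delta^{p}$.

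An alternative route, phrased purely in terms of the defining properties of the sub-expectation, starts from the pointwise inequality $\delta^{p}I_{\{|X|>\delta\}}\leq|X|^{p}$. Monotonicity (property $(1)$) followed by positive homogeneity (property $(3)$) then gives $\delta^{p}\,\hat{\mathbb{E}}[I_{\{|X|>\delta\}}]\leq\hat{\mathbb{E}}[|X|^{p}]$, and one concludes upon identifying $\hat{\mathbb{E}}[I_{A}]$ with $\hat{C}(A)$.

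The hard part will be exactly this identification $\hat{\mathbb{E}}[I_{\{|X|>\delta\}}]=\hat{C}(\{|X|>\delta\})$, together with the fact that the indicator $I_{\{|X|>\delta\}}$ is discontinuous and therefore does not belong to the space $L^{p}_{G}(\Omega)$ on which $\hat{\mathbb{E}}$ was originally defined; one cannot apply the sub-expectation to it without first enlarging its domain. I would handle this by extending $\hat{\mathbb{E}}$ from $L^{p}_{G}(\Omega)$ to the bounded Borel functions via the upper expectation $\sup_{\mathbb{P}\in\mathcal{P}}E_{\mathbb{P}}[\cdot]$, for which $\hat{\mathbb{E}}[I_{A}]=\hat{C}(A)$ holds by construction. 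Once this extension is in place the two routes coincide, and all the substantive content of the lemma is carried by the definition $\hat{C}(A)=\sup_{\mathbb{P}\in\mathcal{P}}\mathbb{P}(A)$ together with the ordinary Markov inequality applied under each $\mathbb{P}$.
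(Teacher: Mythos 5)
Your proof is correct, and it is essentially the argument behind the cited result: the paper itself gives no proof of this lemma, referring instead to Denis--Hu--Peng \cite{dhp}, where the inequality is obtained exactly along your first route --- the classical Markov inequality under each fixed $\mathbb{P}\in\mathcal{P}$, followed by the bound $E_{\mathbb{P}}[|X|^{p}]\leq\hat{\mathbb{E}}[|X|^{p}]$ from the representation $\hat{\mathbb{E}}[\cdot]=\sup_{\mathbb{P}\in\mathcal{P}}E_{\mathbb{P}}[\cdot]$, and a supremum over $\mathbb{P}$. Two remarks. First, your argument requires that $\mathcal{P}$ be precisely the family that \emph{represents} $\hat{\mathbb{E}}$ (the content of the representation theorem in \cite{dhp}); with the paper's literal wording, in which $\mathcal{P}$ is ``the collection of all probability measures on $(\Omega,\mathcal{B}(\Omega))$'', the inequality $E_{\mathbb{P}}[|X|^{p}]\leq\hat{\mathbb{E}}[|X|^{p}]$ would fail, so you are right to invoke the representation explicitly rather than rely on that definition as stated. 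Second, what you actually prove is $\hat{C}(|X|>\delta)\leq\hat{\mathbb{E}}[|X|^{p}]/\delta^{p}$, which is the correct form of the G-Markov (Chebyshev) inequality; the statement as printed has $\delta$ rather than $\delta^{p}$ in the denominator, and that version is false in general for $\delta<1$ (take $X\equiv\tfrac12$ and $p=2$, $\delta=0.3$: the left side is $1$ while $\hat{\mathbb{E}}[|X|^{2}]/\delta=5/6$). So your version corrects a typo rather than deviating from the intended claim. Your alternative route via $\delta^{p}I_{\{|X|>\delta\}}\leq|X|^{p}$ is also sound, and you correctly identify its only delicate point: the indicator lies outside $L^{p}_{G}(\Omega)$, so $\hat{\mathbb{E}}$ must first be extended to bounded Borel functions as the upper expectation, after which $\hat{\mathbb{E}}[I_{A}]=\hat{C}(A)$ holds by construction and the two routes coincide.
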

For the proof of the following lemmas \ref{l3} and \ref{l4} see
\cite{g}.
\begin{lem}\label{l3} Let $p\geq 2$, $\eta\in M_G^2(0,T)$, $a\in\R^d$ and $X(t)=\int_0^t
\eta(s)dB^a(s)$. Then there exists a continuous modification
$\bar{X}(t)$ of $X(t)$, that is, on some $\bar{\Omega}\subset\Omega$
with $\hat{C}(\bar{\Omega}^c)=0$ and for all $t\in[0,T]$,
$\hat{C}(|X(t)-\bar{X}|\neq 0)=0$ such that
\begin{equation*}
\hat{\mathbb{E}}\Big[\sup_{s\leq v\leq
t}|\bar{X}(v)-\bar{X}(s)|^p\Big]\leq
\hat{K}\sigma_{aa^{\tau}}^{\frac{p}{2}}\hat{\mathbb{E}}\Big(\int_s^t|\eta(v)|^2dv\Big)^{\frac{p}{2}},
\end{equation*}
where $0<\hat{K}<\infty$ is a positive constant.
\end{lem}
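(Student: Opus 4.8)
The plan is to establish this Burkholder--Davis--Gundy type estimate in the G-framework by the usual two-stage scheme: prove it first for elementary integrands and then extend by density, since $M_G^{2,0}(0,T)$ is dense in $M_G^2(0,T)$. I would fix an elementary process $\eta_t=\sum_{i=0}^{N-1}\xi_i I_{[t_i,t_{i+1}]}(t)$ and work with $X(t)=\int_0^t\eta(v)dB^a(v)$, which is then a G-martingale to which the G-It\^o calculus applies cleanly.

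First I would derive an endpoint moment bound for $\hat{\mathbb{E}}|X(t)-X(s)|^p$. Applying the G-It\^o formula to $v\mapsto|X(v)-X(s)|^p$ gives
\[
|X(t)-X(s)|^p=p\int_s^t|X(v)-X(s)|^{p-2}(X(v)-X(s))\eta(v)\,dB^a(v)+\frac{p(p-1)}{2}\int_s^t|X(v)-X(s)|^{p-2}\eta^2(v)\,d\langle B^a\rangle(v).
\]
Taking $\hat{\mathbb{E}}$, the stochastic integral term drops out by part (1) of the Proposition, and the bound $\langle B^a\rangle(t)-\langle B^a\rangle(s)\leq\sigma_{aa^\tau}(t-s)$ from the definition of the G-quadratic variation process converts the remaining integral into a Lebesgue integral, producing the factor $\sigma_{aa^\tau}$. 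A H\"older estimate on $\int_s^t|X(v)-X(s)|^{p-2}\eta^2(v)\,dv$, splitting the exponents as $\tfrac{p-2}{p}$ and $\tfrac{2}{p}$, then yields a bound of the shape $\hat{\mathbb{E}}|X(t)-X(s)|^p\leq C\,\sigma_{aa^\tau}^{p/2}\,\hat{\mathbb{E}}(\int_s^t|\eta(v)|^2dv)^{p/2}$ after the supremum factor is absorbed.

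Next I would pass from the endpoint to the supremum using Doob's maximal inequality for G-martingales in the sublinear setting, namely $\hat{\mathbb{E}}[\sup_{s\leq v\leq t}|X(v)-X(s)|^p]\leq C_p\,\hat{\mathbb{E}}|X(t)-X(s)|^p$; combining this with the previous step fixes the constant $\hat K$. The extension from elementary to general $\eta\in M_G^2(0,T)$ is a routine limiting argument: pick $\eta^{(n)}\in M_G^{2,0}(0,T)$ with $\eta^{(n)}\to\eta$ in $M_G^2(0,T)$, apply the inequality to each $\eta^{(n)}$, and pass to the limit by completeness. Finally, the continuous modification $\bar X$ is produced by a Kolmogorov-type continuity criterion in the G-framework: the estimate just obtained, valid for all $p\geq 2$, supplies exactly the H\"older-moment control needed to build a version that is continuous on a set $\bar\Omega$ of full capacity and that satisfies $\hat C(|X(t)-\bar X(t)|\neq 0)=0$ for each $t\in[0,T]$.

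The main obstacle I anticipate is the sublinearity of $\hat{\mathbb{E}}$ together with the fact that, unlike classical Brownian motion, $\langle B^a\rangle$ is a genuinely random increasing process rather than a deterministic clock; the bound $d\langle B^a\rangle\leq\sigma_{aa^\tau}\,dt$ is precisely what makes the estimate dimensionally correct in $\sigma_{aa^\tau}$, and the vanishing of the stochastic integral under $\hat{\mathbb{E}}$ and the validity of Doob's inequality must be justified within G-martingale theory rather than borrowed from a single-measure setting. The most delicate point is establishing the continuous modification \emph{quasi-surely}: the Kolmogorov criterion has to be run with respect to the capacity $\hat C$ (equivalently, uniformly over the family $\mathcal P$), so that the exceptional set is polar rather than merely null under one $\mathbb{P}$.
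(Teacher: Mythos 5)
First, a point of fact: the paper does not prove Lemma \ref{l3} at all. It is imported as a known result, with the explicit remark ``For the proof of the following lemmas \ref{l3} and \ref{l4} see \cite{g}.'' So the comparison is between your sketch and the argument in the cited source (Gao), not against anything in this paper. Your route is genuinely different from that one. Gao's proof does not run an intrinsic G-It\^o-formula/Doob/density scheme; it uses the Denis--Hu--Peng representation $\hat{\mathbb{E}}[\cdot]=\sup_{P\in\mathcal{P}}E_P[\cdot]$ over a weakly compact family of classical probability measures, under each of which the canonical process is a continuous martingale whose quadratic variation has density bounded by $\sigma_{aa^\tau}$ and the G-It\^o integral coincides with the classical It\^o integral. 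The classical Burkholder--Davis--Gundy inequality then gives the estimate under each $P$ with a constant depending only on $p$, and taking the supremum over $\mathcal{P}$ yields the sublinear-expectation statement; the quasi-sure continuous modification comes from the capacity/quasi-continuity theory of Denis--Hu--Peng rather than from rerunning Kolmogorov's criterion by hand. What the representation approach buys is that all the hard probabilistic work (BDG, maximal inequalities, path continuity) is borrowed wholesale from the single-measure setting with constants uniform in $P$; what your intrinsic approach would buy, if completed, is a proof that never invokes the representation theorem.

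That said, your sketch has two soft spots that would need attention before it counts as a proof. First, ``Doob's maximal inequality for G-martingales'' is not among the tools quoted in this paper, and in the literature it is itself usually justified either by the very representation described above (classical Doob under each $P$, then take suprema) or by nontrivial intrinsic G-martingale theory; so as written your argument either silently relies on the same machinery as \cite{g} or needs a further reference and hypothesis check (the integral $\int_0^t\eta\,dB^a$ is a symmetric G-martingale, which is what makes this legitimate). Second, the absorption step --- bounding $\hat{\mathbb{E}}\int_s^t|X(v)-X(s)|^{p-2}\eta^2(v)\,dv$ via H\"older with exponents $p/(p-2)$ and $p/2$ and then moving the factor $\big(\hat{\mathbb{E}}[\sup_{s\le v\le t}|X(v)-X(s)|^p]\big)^{(p-2)/p}$ to the left-hand side --- is only valid if you know a priori that this supremum moment is finite; for elementary integrands with $\xi_i\in L^p_G(\Omega_{t_i})$ this requires a truncation or localization argument, and the same finiteness is needed to apply property (1) of the Proposition to the stochastic-integral term produced by the G-It\^o formula. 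Neither issue is fatal, and your identification of the quasi-sure (capacity-level) Kolmogorov criterion as the delicate point for the continuous modification is exactly right, but both gaps must be filled for the intrinsic route to stand on its own.
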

\begin{lem}\label{l4} Let $p\geq 1$, $\eta\in M_G^p(0,T)$ and $a,\hat{a}\in\R^d$, then there exists a continuous modification
$\bar{X}^{a,\hat{a}}(t)$ of  $X^{a,\hat{a}}(t)=\int_0^t \eta(s)d
\langle B^{a}, B^{\hat{a}} \rangle (s)$ such that for $0\leq s\leq
t\leq T$,
\begin{equation*}
\hat{\mathbb{E}}\Big[\sup_{0\leq s\leq v\leq
t}|\bar{X}^{a,\hat{a}}(v)-\bar{X}^{a,\hat{a}}(s)|^p\Big]\leq
\Big(\frac{1}{4}\sigma_{(a+\hat{a})(a-\hat{a})^\tau}\Big)^{p}(t-s)^{p-1}\hat{\mathbb{E}}\int_s^t|\eta(v)|^pdv,
\end{equation*}
\end{lem}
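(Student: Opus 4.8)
The plan is to exploit the feature that, in contrast with the stochastic integral treated in Lemma \ref{l3}, the integrator $\langle B^{a}, B^{\hat a}\rangle$ is a continuous process of \emph{bounded variation}, so that $X^{a,\hat a}$ is an ordinary pathwise Lebesgue--Stieltjes integral and no Burkholder--Davis--Gundy type inequality is needed. The single analytic ingredient is a Lipschitz-in-time control of the mutual variation: combining the polarization identity $\langle B^{a}, B^{\hat a}\rangle=\tfrac14[\langle B^{a+\hat a}\rangle-\langle B^{a-\hat a}\rangle]$ with the increment bound $\langle B^{b}\rangle(t)-\langle B^{b}\rangle(s)\le \sigma_{bb^\tau}(t-s)$ recorded above for the G-quadratic variation, applied to each of the increasing processes $\langle B^{a+\hat a}\rangle$ and $\langle B^{a-\hat a}\rangle$, one obtains, quasi-surely and for all $0\le s\le t\le T$,
\begin{equation*}
\big|\langle B^{a}, B^{\hat a}\rangle(t)-\langle B^{a}, B^{\hat a}\rangle(s)\big|\le \tfrac14\,\sigma_{(a+\hat a)(a-\hat a)^\tau}\,(t-s).
\end{equation*}
Equivalently, the signed measure $d\langle B^{a}, B^{\hat a}\rangle$ is absolutely continuous with respect to Lebesgue measure, with density bounded in modulus by $\tfrac14\sigma_{(a+\hat a)(a-\hat a)^\tau}$.

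First I would establish the estimate for a simple integrand $\eta\in M_G^{p,0}(0,T)$, where $X^{a,\hat a}$ is the explicit finite sum $\sum_i \xi_i\big(\langle B^{a}, B^{\hat a}\rangle(t_{i+1})-\langle B^{a}, B^{\hat a}\rangle(t_i)\big)$ defining $H_{0,T}(\eta)$; being a finite combination of the continuous processes $\langle B^{a}, B^{\hat a}\rangle$, it is manifestly continuous, so on this dense subclass the modification $\bar X^{a,\hat a}$ may be taken to be $X^{a,\hat a}$ itself. For such $\eta$ the bounded-variation bound yields the pathwise inequality $|X^{a,\hat a}(v)-X^{a,\hat a}(s)|\le \tfrac14\sigma_{(a+\hat a)(a-\hat a)^\tau}\int_s^{v}|\eta(r)|\,dr$ for every $v\in[s,t]$; since the right-hand side is nondecreasing in $v$, taking the supremum over $v\in[s,t]$ replaces the upper limit $v$ by $t$. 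Raising to the $p$-th power and invoking H\"older's inequality in the form $\big(\int_s^t|\eta(r)|\,dr\big)^p\le (t-s)^{p-1}\int_s^t|\eta(r)|^p\,dr$ gives the $\omega$-wise bound
\begin{equation*}
\sup_{s\le v\le t}\big|X^{a,\hat a}(v)-X^{a,\hat a}(s)\big|^p\le \Big(\tfrac14\sigma_{(a+\hat a)(a-\hat a)^\tau}\Big)^p (t-s)^{p-1}\int_s^t|\eta(r)|^p\,dr,
\end{equation*}
and applying the monotone sublinear expectation $\hat{\mathbb{E}}$ produces the claimed inequality for simple integrands.

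To pass to a general $\eta\in M_G^{p}(0,T)$, I would choose simple processes $\eta_n\to\eta$ in the norm of $M_G^{p}(0,T)$ and apply the estimate just proved to the differences $\eta_n-\eta_m$. This shows that $\{X^{a,\hat a}_n\}$ is Cauchy under $\hat{\mathbb{E}}\big[\sup_{0\le v\le T}|\cdot|^p\big]$, so by completeness it converges; because the controlling bound is uniform in time, the limit is a uniform (hence continuous) limit of continuous processes and therefore admits a continuous modification $\bar X^{a,\hat a}$ agreeing quasi-surely with $H_{0,T}(\eta)$. The displayed inequality is then inherited in the limit from $\hat{\mathbb{E}}\int_s^t|\eta_n|^p\,dv\to\hat{\mathbb{E}}\int_s^t|\eta|^p\,dv$, using Fatou's property of $\hat{\mathbb{E}}$.

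The step I expect to be the genuine obstacle is the first one: justifying the Lipschitz-in-time control of $\langle B^{a}, B^{\hat a}\rangle$ with the precise constant $\tfrac14\sigma_{(a+\hat a)(a-\hat a)^\tau}$, since the mutual variation is a \emph{difference} of two increasing processes and its total variation must be governed through the G-variance structure rather than by monotonicity alone. Once that bound is secured the rest is a routine H\"older-and-density argument; the only G-specific care needed thereafter is that every pathwise statement and the passage to the limit be read in the quasi-sure sense, using the G-capacity framework together with the G-Markov inequality of Lemma \ref{l2} to upgrade $M_G^{p}$-convergence to quasi-sure uniform convergence along a subsequence.
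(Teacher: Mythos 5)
Two remarks before the substance: the paper itself contains \emph{no} proof of this lemma --- it is quoted, with attribution, from Gao \cite{g} (``For the proof of the following lemmas \ref{l3} and \ref{l4} see \cite{g}'') --- so your proposal can only be measured against that source. Your architecture is in fact the standard (Gao's) one: the mutual variation is a difference of two increasing, Lipschitz-in-time processes, hence of bounded variation with a density controlled by the G-variance structure; a pathwise bound plus H\"older's inequality $\big(\int_s^t|\eta(r)|\,dr\big)^p\le(t-s)^{p-1}\int_s^t|\eta(r)|^p\,dr$ gives the estimate for simple integrands; and density of $M_G^{p,0}$ in $M_G^{p}$, the estimate applied to differences $\eta_n-\eta_m$, and a capacity/Borel--Cantelli upgrade produce the continuous modification and the inequality in the limit. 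That skeleton is sound.

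The obstacle you flagged is, however, a genuine gap, and it cannot be closed in the form you stated. From the polarization identity and the increment bound $0\le \langle B^{b}\rangle(t)-\langle B^{b}\rangle(s)\le\sigma_{bb^\tau}(t-s)$ applied with $b=a+\hat a$ and $b=a-\hat a$, what actually follows is that the signed measure $d\langle B^{a},B^{\hat a}\rangle$ has density lying in the interval $\big[-\tfrac14\sigma_{(a-\hat a)(a-\hat a)^\tau},\,\tfrac14\sigma_{(a+\hat a)(a+\hat a)^\tau}\big]$, hence bounded in modulus by $\tfrac14\max\big\{\sigma_{(a+\hat a)(a+\hat a)^\tau},\,\sigma_{(a-\hat a)(a-\hat a)^\tau}\big\}$, and a fortiori by the sum $\tfrac14\big[\sigma_{(a+\hat a)(a+\hat a)^\tau}+\sigma_{(a-\hat a)(a-\hat a)^\tau}\big]$, which is the constant appearing in Gao's version of this lemma. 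It does \emph{not} yield the single cross term $\tfrac14\sigma_{(a+\hat a)(a-\hat a)^\tau}$, and no argument can: under the standard reading $\sigma_{ab^\tau}=2G\big(\tfrac{ab^\tau+ba^\tau}{2}\big)=\sup_{Q}a^\tau Q b$, with $Q$ ranging over the covariance-uncertainty set, taking $a=\hat a$ gives $\sigma_{(a+\hat a)(a-\hat a)^\tau}=0$, while $\langle B^{a},B^{a}\rangle=\langle B^{a}\rangle$ is nontrivial; the inequality as printed would then force $\int_0^t\eta(s)\,d\langle B^{a}\rangle(s)\equiv 0$, which is false. In other words, the constant printed in the statement is a mis-transcription of the constant in \cite{g}, and your two ingredients prove the lemma with the (correct) max or sum constant, not with the printed one. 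Once the constant is read as $\tfrac14\max\big\{\sigma_{(a+\hat a)(a+\hat a)^\tau},\,\sigma_{(a-\hat a)(a-\hat a)^\tau}\big\}$ (or Gao's sum form), your proof --- pathwise bound, H\"older, density of simple processes, quasi-sure passage to the limit --- goes through exactly as written.
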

The next two lemmas will also be used in the subsequent sections of
this article \cite{m}.
\begin{lem}\label{l7} Let $a,b\geq0$ and
$\epsilon\in(0,1).$ Then
\begin{align*}
(a+b)^2\leq\frac{a^2}{\epsilon}+\frac{b^2}{1-\epsilon}.
\end{align*}
\end{lem}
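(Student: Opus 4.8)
The plan is to reduce the inequality to the nonnegativity of a perfect square. Since $\epsilon\in(0,1)$, both $\epsilon$ and $1-\epsilon$ are strictly positive, so the right-hand side is well defined. First I would subtract the left-hand side from the right-hand side and collect like powers of $a$ and $b$, writing
\[
\frac{a^2}{\epsilon}+\frac{b^2}{1-\epsilon}-(a+b)^2
= a^2\,\frac{1-\epsilon}{\epsilon}+b^2\,\frac{\epsilon}{1-\epsilon}-2ab,
\]
where I have used $\tfrac{1}{\epsilon}-1=\tfrac{1-\epsilon}{\epsilon}$ and $\tfrac{1}{1-\epsilon}-1=\tfrac{\epsilon}{1-\epsilon}$ and cancelled the $a^2+b^2$ coming from the expansion of $(a+b)^2$.

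The second step is to recognise the resulting expression as a perfect square. Setting $u=a\sqrt{(1-\epsilon)/\epsilon}$ and $v=b\sqrt{\epsilon/(1-\epsilon)}$, one checks that $u^2=a^2\tfrac{1-\epsilon}{\epsilon}$, $v^2=b^2\tfrac{\epsilon}{1-\epsilon}$, and, since the two square roots are reciprocals, $uv=ab$. Hence the right-hand side of the displayed identity equals $u^2+v^2-2uv=(u-v)^2\ge 0$, which is exactly the assertion of the lemma.

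Alternatively, I could obtain the estimate in a single line from the two-term Cauchy--Schwarz inequality: writing $a+b=\sqrt{\epsilon}\cdot\bigl(a/\sqrt{\epsilon}\bigr)+\sqrt{1-\epsilon}\cdot\bigl(b/\sqrt{1-\epsilon}\bigr)$ and applying Cauchy--Schwarz gives $(a+b)^2\le\bigl(\epsilon+(1-\epsilon)\bigr)\bigl(\tfrac{a^2}{\epsilon}+\tfrac{b^2}{1-\epsilon}\bigr)$, and the first factor is identically $1$. Either route is short and self-contained.

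There is no genuine obstacle here; the lemma is a purely algebraic estimate with no analytic or probabilistic content, and the ``hard part'' is merely bookkeeping of the coefficients. The one point deserving a word of care is the nonvanishing of the two denominators, which is guaranteed by the strict inclusion $\epsilon\in(0,1)$. I note in passing that the hypothesis $a,b\ge 0$ is not actually needed for the square-completion argument, since $(u-v)^2\ge 0$ regardless of sign; it simply records the regime $a,b\ge 0$ in which the lemma is invoked in the later moment and $L^2_G$-estimates.
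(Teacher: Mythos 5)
Your proof is correct. Note that the paper itself offers no proof of Lemma \ref{l7}: it is quoted, together with Lemma \ref{l8}, from Mao's monograph \cite{m}, so there is no argument in the paper to compare yours against. Both of your routes are standard and fully rigorous: the square-completion identity
\begin{equation*}
\frac{a^2}{\epsilon}+\frac{b^2}{1-\epsilon}-(a+b)^2
= a^2\,\frac{1-\epsilon}{\epsilon}+b^2\,\frac{\epsilon}{1-\epsilon}-2ab
=\Bigl(a\sqrt{\tfrac{1-\epsilon}{\epsilon}}-b\sqrt{\tfrac{\epsilon}{1-\epsilon}}\Bigr)^2\ge 0
\end{equation*}
is checked correctly, and the one-line weighted Cauchy--Schwarz argument with weights $\epsilon+(1-\epsilon)=1$ gives the same conclusion. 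Your closing observation is also accurate: the hypothesis $a,b\ge 0$ is not needed, since the inequality is equivalent to the nonnegativity of a square and therefore holds for all real $a,b$; the sign restriction merely reflects the setting (norms and moduli) in which the lemma is invoked in the later $L^2_G$ and exponential estimates.
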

\begin{lem}\label{l8} Assume $p\geq2$ and $\hat{\epsilon},a,b>0.$
Then the following two inequalities hold.
\begin{itemize}
 \item[${\bf(i)}$]  $a^{p-1}b\leq\frac{(p-1)\hat{\epsilon}a^p}{p}+\frac{b^p}{p\hat{\epsilon}^{p-1}}.$
\item[${\bf(ii)}$]  $a^{p-2}b^2\leq\frac{(p-2)\hat{\epsilon}a^p}{p}+\frac{2b^p}{p\hat{\epsilon}^{\frac{p-2}{2}}}.$
\end{itemize}
\end{lem}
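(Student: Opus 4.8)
The plan is to obtain both inequalities as instances of the classical Young inequality: for conjugate exponents $r,s>1$ with $\frac1r+\frac1s=1$ and all $x,y\geq0$ one has $xy\leq\frac{x^r}{r}+\frac{y^s}{s}$. The only real task is to pick the exponents dictated by the powers of $a$ appearing on the left, and then to split the weight $\hat{\epsilon}$ between $x$ and $y$ so that the right-hand side reproduces the stated constants exactly. No other analytic input is needed, so I would present the two parts as direct substitutions.

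For part (i) I would take the conjugate pair $r=\frac{p}{p-1}$ and $s=p$, which is admissible since $\frac{p-1}{p}+\frac1p=1$ and both exponents exceed $1$ for every $p\geq2$. Choosing $x=\hat{\epsilon}^{(p-1)/p}a^{p-1}$ and $y=\hat{\epsilon}^{-(p-1)/p}b$ gives $xy=a^{p-1}b$, while a short exponent computation yields $\frac{x^r}{r}=\frac{(p-1)\hat{\epsilon}a^p}{p}$ and $\frac{y^s}{s}=\frac{b^p}{p\hat{\epsilon}^{p-1}}$. Substituting these into Young's inequality produces (i) at once.

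For part (ii) I would instead use the pair $r=\frac{p}{p-2}$ and $s=\frac{p}{2}$, noting $\frac{p-2}{p}+\frac2p=1$, and set $x=\hat{\epsilon}^{(p-2)/p}a^{p-2}$, $y=\hat{\epsilon}^{-(p-2)/p}b^2$, so that $xy=a^{p-2}b^2$, $\frac{x^r}{r}=\frac{(p-2)\hat{\epsilon}a^p}{p}$ and $\frac{y^s}{s}=\frac{2b^p}{p\hat{\epsilon}^{(p-2)/2}}$, giving (ii). The one point I would flag as the only genuine subtlety is the degenerate case $p=2$ in (ii): there $r=\frac{p}{p-2}$ blows up and the Young argument does not apply. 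This case is however trivial, since for $p=2$ the asserted bound reads $b^2\leq 0+b^2$, an equality, so I would dispose of it by inspection and run the conjugate-exponent argument only for $p>2$. Part (i) needs no such case distinction because $r=\frac{p}{p-1}$ remains finite for all $p\geq2$. Beyond this degeneracy the proof is purely bookkeeping of exponents with no obstacle.
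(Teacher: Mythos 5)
Your proof is correct, but there is nothing in the paper to compare it against: the paper does not prove Lemma \ref{l8} at all, it merely states it and refers the reader to Mao's book \cite{m} (``The next two lemmas will also be used in the subsequent sections of this article''). Your argument therefore supplies a proof the paper omits, and it is the standard one: the weighted form of Young's inequality $xy\leq\frac{x^r}{r}+\frac{y^s}{s}$ with the conjugate pairs $\bigl(\frac{p}{p-1},p\bigr)$ for (i) and $\bigl(\frac{p}{p-2},\frac{p}{2}\bigr)$ for (ii), with the weight $\hat{\epsilon}$ distributed so that the exponent bookkeeping reproduces the stated constants; I checked the substitutions $x=\hat{\epsilon}^{(p-1)/p}a^{p-1}$, $y=\hat{\epsilon}^{-(p-1)/p}b$ and $x=\hat{\epsilon}^{(p-2)/p}a^{p-2}$, $y=\hat{\epsilon}^{-(p-2)/p}b^2$, and they yield exactly the right-hand sides claimed. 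Your treatment of the degenerate case $p=2$ in (ii) is the one genuine subtlety and you handle it properly: the conjugate exponent $\frac{p}{p-2}$ is undefined there, but the asserted inequality collapses to $b^2\leq b^2$, so disposing of it by inspection and running the Young argument only for $p>2$ is exactly right.
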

\section{Existence and uniqueness of solutions}
In the phase space $BC((-\infty,T];\R^d))$, equation \eqref{1} under
the global Lipschitz and growth conditions admits a unique solution
\cite{f6,rbs}. In this section, we study the existence and
uniqueness theory for the solutions to \eqref{1} in the phase space
with fading memory $C_q((-\infty,0];\R^d)$.
\begin{thm}\label{thn} Let there exists two positive constants $L$ and $K$ such that for all
$\varphi,\psi\in C_q((-\infty,0];\R^d)$ and $t\in[0,T]$, the following conditions
hold.
\begin{equation}\label{cf1}|g(\psi)-g(\varphi)|^2\vee |h(\psi)-h(\varphi)|^2\vee|\gamma(\psi)-\gamma(\phi)|^2\leq L\|\psi-\phi\|_q^2,\end{equation}
\begin{equation}\label{cf2}|g(\varphi)|^2\vee |h(\varphi)|^2\vee|\gamma(\varphi)|^2\leq K(1+\|\varphi\|_q^2).\end{equation}
Then problem \eqref{1} with initial data $\zeta\in
C_q((-\infty,0];\R^{d})$ admits a unique bounded solution $X(t)$,
which is continuous and $\mathcal{F}_t$-adapted on $t\in[0,T]$.
\end{thm}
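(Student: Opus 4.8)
The plan is to construct the solution on each finite window $[0,T]$ by Picard iteration and to reduce every estimate involving the infinite-delay norm $\|\cdot\|_q$ to an estimate of the running supremum on $[0,t]$. The decisive preliminary fact is that, because all objects share the initial segment $\zeta$, the fading-memory norm of a shifted segment is controlled by the supremum over the elapsed time. Concretely, for any continuous process $Z$ with $Z_0=\zeta$ and any $s\ge 0$, splitting $\sup_{-\infty<\alpha\le 0}e^{q\alpha}|Z(s+\alpha)|$ according to whether $s+\alpha\le 0$ (history, where $Z=\zeta$ and $e^{q\alpha}|\zeta(s+\alpha)|=e^{-qs}e^{q(s+\alpha)}|\zeta(s+\alpha)|\le\|\zeta\|_q$) or $s+\alpha>0$ (current, where $e^{q\alpha}\le 1$) gives
\begin{equation*}
\|Z_s\|_q\le \|\zeta\|_q+\sup_{0\le r\le s}|Z(r)|,
\end{equation*}
and for two processes $X,Y$ sharing the data $\zeta$ the history parts cancel, so $\|X_s-Y_s\|_q\le \sup_{0\le r\le s}|X(r)-Y(r)|$. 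These two inequalities are what allow the $\|\cdot\|_q$ term to be absorbed into a Gronwall loop.

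Next I would set $X^0(t)=\zeta(0)$ on $t\ge 0$, $X^0_0=\zeta$, and define
\begin{equation*}
X^{n+1}(t)=\zeta(0)+\int_0^t g(X^n_s)\,ds+\int_0^t h(X^n_s)\,d\langle B,B\rangle(s)+\int_0^t \gamma(X^n_s)\,dB(s),
\end{equation*}
with $X^{n+1}_0=\zeta$. Each iterate is continuous and $\mathcal{F}_t$-adapted, and the growth condition \eqref{cf2} together with the moment estimates for the G-stochastic integrals given in Section~2 (in particular Lemmas~\ref{l3} and~\ref{l4}) ensures the integrands lie in $M_G^2(0,T)$, so the recursion is well defined. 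For a uniform a priori bound I would estimate $\hat{\mathbb{E}}[\sup_{0\le r\le t}|X^{n+1}(r)|^2]$ by applying an elementary inequality of the type $(a+b+c+d)^2\le 4(a^2+b^2+c^2+d^2)$, then Lemmas~\ref{l3} and~\ref{l4} to the two stochastic terms, then \eqref{cf2} to replace each coefficient by $K(1+\|X^n_s\|_q^2)$, and finally the segment estimate above to write $\|X^n_s\|_q^2\le 2\|\zeta\|_q^2+2\sup_{0\le r\le s}|X^n(r)|^2$. This yields a bound $\hat{\mathbb{E}}[\sup_{0\le r\le t}|X^{n+1}(r)|^2]\le C_1+C_2\int_0^t \hat{\mathbb{E}}[\sup_{0\le r\le s}|X^n(r)|^2]\,ds$ with constants independent of $n$; induction followed by Gronwall's inequality bounds $\sup_n\hat{\mathbb{E}}[\sup_{0\le r\le T}|X^n(r)|^2]$ by a finite constant depending only on $T,K,L,\|\zeta\|_q$, which will also furnish the asserted boundedness of the solution.

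The main work is the contraction step, and I expect the absorption of the fading-memory norm to be the crux. Writing $D_n(t)=\hat{\mathbb{E}}[\sup_{0\le r\le t}|X^{n+1}(r)-X^n(r)|^2]$, I would subtract the defining identities, apply the same three moment inequalities to the increments, invoke the Lipschitz condition \eqref{cf1} to bound each coefficient difference by $L\|X^n_s-X^{n-1}_s\|_q^2$, and then use the second segment inequality (the history parts cancel since every iterate starts from $\zeta$) to replace $\|X^n_s-X^{n-1}_s\|_q^2$ by $\sup_{0\le r\le s}|X^n(r)-X^{n-1}(r)|^2$. This produces $D_n(t)\le C_3\int_0^t D_{n-1}(s)\,ds$, hence $D_n(T)\le (C_3T)^n D_0(T)/n!\to 0$. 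Without the segment inequalities the $\|\cdot\|_q$ term cannot be closed into the Gronwall loop, and one must be careful to treat the $d\langle B,B\rangle$ integral by Lemma~\ref{l4} rather than the classical martingale bound. By completeness the iterates converge uniformly on $[0,T]$ to a continuous, adapted, square-integrable limit $X$; passing to the limit in the integral identity, using the Lipschitz continuity of $g,h,\gamma$ to carry the limit through the integrals, shows that $X$ solves \eqref{1}.

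Finally, for uniqueness I would take two solutions $X,Y$ on $[0,T]$ with the same data $\zeta$, set $\Delta(t)=\hat{\mathbb{E}}[\sup_{0\le r\le t}|X(r)-Y(r)|^2]$, and repeat verbatim the estimate of the contraction step to obtain $\Delta(t)\le C_3\int_0^t\Delta(s)\,ds$; Gronwall's inequality then forces $\Delta\equiv 0$, so $X=Y$ quasi-surely on $[0,T]$.
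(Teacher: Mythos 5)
Your Picard-iteration argument is correct and is exactly the route the paper intends: the paper omits the proof of Theorem \ref{thn}, deferring to the Picard approximation technique of Ren, Bi and Sakthivel \cite{rbs}, which is precisely what you reconstruct (a priori bound, contraction via Gronwall, limit passage, uniqueness). Your preliminary segment inequalities controlling $\|Z_s\|_q$ by $\|\zeta\|_q$ plus the running supremum are essentially the paper's Lemma \ref{Lf3}, so your treatment of the fading-memory norm also matches the paper's toolkit.
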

We omit the proof as it can be derived in a similar way like
\cite{rbs}. To extend the above existence-uniqueness result to
G-SFDEwID with locally Lipschitz continuous coefficients, we first
give the following lemma.
\begin{lem}\label{Lf3} Let  $p\geq 1$ and $\lambda< pq$. Then for any $\zeta\in C_q((-\infty,0];\R^d)$,
\begin{equation*}\begin{split}\label{}
\|X_t\|^p_q&\leq  e^{-\lambda t} \|\zeta\|_q^p+\sup_{0<s\leq
t}|X(s)|^p.
\end{split}\end{equation*}
\end{lem}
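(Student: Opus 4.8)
The plan is to reduce $\|X_t\|_q^p$ to a single supremum and then split the range of the delay variable at the instant where the trajectory passes from the prescribed initial data into the genuinely solved part. Since $p\ge 1$ and the map $x\mapsto x^p$ is nondecreasing on $[0,\infty)$, I would first write
\[
\|X_t\|_q^p=\Big(\sup_{-\infty<\alpha\le 0}e^{q\alpha}|X(t+\alpha)|\Big)^p=\sup_{-\infty<\alpha\le 0}e^{pq\alpha}|X(t+\alpha)|^p .
\]
The key observation is that $X(t+\alpha)$ is governed by the initial datum exactly when $t+\alpha\le 0$, i.e.\ when $\alpha\le -t$, and by the solution itself when $-t<\alpha\le 0$. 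Accordingly I would break the supremum into the two regimes $\{\alpha\le -t\}$ and $\{-t<\alpha\le 0\}$ and estimate each separately.

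On the past regime $\alpha\le -t$ we have $X(t+\alpha)=\zeta(t+\alpha)$, because $X_0=\zeta$. Substituting $\beta=t+\alpha\le 0$ gives $e^{pq\alpha}=e^{-pqt}e^{pq\beta}$, so
\[
\sup_{\alpha\le -t}e^{pq\alpha}|X(t+\alpha)|^p=e^{-pqt}\sup_{\beta\le 0}e^{pq\beta}|\zeta(\beta)|^p=e^{-pqt}\|\zeta\|_q^p ,
\]
where the last equality again uses that the $p$th power may be taken inside the supremum. On the recent regime $-t<\alpha\le 0$ I would simply bound the weight by $e^{pq\alpha}\le 1$ and note that $s:=t+\alpha$ ranges over $(0,t]$, which yields
\[
\sup_{-t<\alpha\le 0}e^{pq\alpha}|X(t+\alpha)|^p\le \sup_{0<s\le t}|X(s)|^p .
\]

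Combining the two regimes, the supremum over the whole range is the larger of the two bounds, and estimating a maximum by the corresponding sum gives
\[
\|X_t\|_q^p\le e^{-pqt}\|\zeta\|_q^p+\sup_{0<s\le t}|X(s)|^p .
\]
Finally, since $t\ge 0$ and $\lambda< pq$ together imply $e^{-pqt}\le e^{-\lambda t}$, the factor multiplying $\|\zeta\|_q^p$ may be relaxed to $e^{-\lambda t}$, which is precisely the claimed inequality. The argument is essentially bookkeeping; the only point requiring care is the change of variables $\beta=t+\alpha$ on the past regime, and this is exactly where the hypothesis $\lambda<pq$ enters: the genuine decay rate $pq$ produced by that substitution must be weakenable to the prescribed rate $\lambda$ uniformly for every $t\ge 0$, which is guaranteed precisely by $\lambda\le pq$.
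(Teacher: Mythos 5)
Your proof is correct, and although it follows the same skeleton as the paper's (take the $p$th power inside the supremum, split the delay range at the junction between the initial datum and the solved trajectory, bound the maximum of the two suprema by their sum), it differs in one decisive respect: \emph{when} the exponent $pq$ is relaxed to $\lambda$. You keep the genuine weight $e^{pq\alpha}$ on the past regime, so that after the substitution $\beta=t+\alpha$ the past supremum is exactly $e^{-pqt}\|\zeta\|_q^p$, and you only weaken $e^{-pqt}\le e^{-\lambda t}$ at the very end. The paper instead replaces $e^{pq\alpha}$ by $e^{\lambda\alpha}$ at the first step; after its substitution the past term becomes $e^{-\lambda t}\sup_{s\le 0}e^{\lambda s}|\zeta(s)|^p$, which the paper then \emph{equates} with $e^{-\lambda t}\|\zeta\|_q^p$. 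That identification is false in general: for $s\le 0$ and $\lambda<pq$ one has $e^{\lambda s}\ge e^{pqs}$, so this supremum dominates $\|\zeta\|_q^p$ and may even be infinite (take $|\zeta(s)|=e^{-qs}v$ for a unit vector $v$, so that $\|\zeta\|_q=1$ while $\sup_{s\le 0}e^{\lambda s}|\zeta(s)|^p=\sup_{s\le 0}e^{(\lambda-pq)s}=\infty$). Your ordering of the two operations avoids this difficulty and in fact yields the slightly stronger bound $\|X_t\|_q^p\le e^{-pqt}\|\zeta\|_q^p+\sup_{0<s\le t}|X(s)|^p$; so your argument not only proves the lemma as stated but repairs a genuine gap in the paper's own derivation.
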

\begin{proof} By virtue of the definition of norm $\|.\|$ and observing that
$pq>\lambda$ we have
\begin{equation*}\begin{split}\label{}
\|X_t\|^p_q& = \Big[\sup_{-\infty<\alpha\leq
0}e^{q\alpha}|X(t+\alpha)|\Big]^p\\& \leq \sup_{-\infty<\alpha\leq
0}e^{\lambda\alpha}|X(t+\alpha)|^p\\& \leq \sup_{-\infty<s\leq
0}e^{-\lambda(t-s)}|X(s)|^p+\sup_{0<s\leq
t}e^{-\lambda(t-s)}|X(s)|^p\\& =e^{-\lambda t}
\|\zeta\|_q^p+e^{-\lambda t}\sup_{0<s\leq t}e^{\lambda
s}|X(s)|^p\\&\leq e^{-\lambda t} \|\zeta\|_q^p+\sup_{0<s\leq
t}|X(s)|^p.
\end{split}\end{equation*}
The proof is complete.
\end{proof}
All through this article we assume that for any $p\geq 1$, $\lambda<
pq$.
\begin{thm}\label{thm1} Let for any $m>0$ there exists a positive constant $K_m$ such that for all $\varphi,\psi\in C_q((-\infty,0];\R^d)$
and $t\in[0,T]$, the following local Lipschitz condition hold,
\begin{equation}\label{cn}
|g(\varphi)-g(\psi)|^2\vee|h(\psi)-h(\varphi)|^2\vee|\gamma(\psi)-\gamma(\varphi)|^2\leq
K_m\|\psi-\phi\|^2_q,
\end{equation}
with $\|\psi\|\vee\|\varphi\|\leq m.$ Then the G-SFDEwID \eqref{1}
having the initial data $\zeta\in C_q((-\infty,0];\R^{d})$ admits a
continuous and $\mathcal{F}_t$-adapted unique local solution $X(t)$
quasi-surely on $t\in(-\infty,\theta_e)$, where $\theta_e$ is the
potential explosion time.
\end{thm}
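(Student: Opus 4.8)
The plan is to run a standard truncation (localization) argument that reduces the locally Lipschitz problem to the globally Lipschitz situation already settled in Theorem~\ref{thn}. For each integer $m\geq 1$ I would first replace the coefficients by truncated versions obtained by composing $g,h,\gamma$ with the radial retraction of the Banach space $C_q((-\infty,0];\R^d)$ onto the closed ball of radius $m$. Concretely, set
\[
\pi_m(\psi)=\begin{cases} \psi & \text{if } \|\psi\|_q\leq m,\\[1mm] \dfrac{m\psi}{\|\psi\|_q} & \text{if } \|\psi\|_q> m,\end{cases}
\]
and put $g_m=g\circ\pi_m$, $h_m=h\circ\pi_m$, $\gamma_m=\gamma\circ\pi_m$. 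Since $\pi_m$ maps into $\{\|\cdot\|_q\leq m\}$ and is Lipschitz in the norm $\|\cdot\|_q$ with a constant at most $2$, the local Lipschitz bound \eqref{cn} transfers into a global Lipschitz estimate of the form \eqref{cf1} for $g_m,h_m,\gamma_m$ (with constant $4K_m$). Likewise, writing $|g_m(\psi)|\leq |g_m(\psi)-g(0)|+|g(0)|$ and applying \eqref{cn} on the ball together with $\|\pi_m(\psi)\|_q\leq m$, each truncated coefficient is bounded and therefore satisfies the linear growth condition \eqref{cf2} with an $m$-dependent constant; the same computation applies to $h_m$ and $\gamma_m$.

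Next, for each $m$ I would apply Theorem~\ref{thn} to the truncated equation obtained by replacing $g,h,\gamma$ in \eqref{1} by $g_m,h_m,\gamma_m$, yielding a unique continuous, $\mathcal{F}_t$-adapted global solution $X^{(m)}$ on $[0,T]$ with $X^{(m)}_0=\zeta$. Taking $m>\|\zeta\|_q$ so that the hitting times are positive, I define the stopping times
\[
\theta_m=\inf\{t\geq 0:\ \|X^{(m)}_t\|_q\geq m\}.
\]
On the stochastic interval $[0,\theta_m)$ we have $\|X^{(m)}_t\|_q<m$, so $\pi_m$ acts as the identity, the truncated coefficients coincide with the genuine ones, and hence $X^{(m)}$ solves the original equation \eqref{1} up to $\theta_m$. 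Lemma~\ref{Lf3} enters here as the tool controlling the fading-memory norm $\|X^{(m)}_t\|_q$ in terms of $e^{-\lambda t}\|\zeta\|_q + \sup_{0<s\leq t}|X^{(m)}(s)|$, which makes the norm-based localization comparable with pointwise growth of the path.

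The patching step is to show $X^{(m+1)}(t)=X^{(m)}(t)$ for $t\in[0,\theta_m)$ quasi-surely. Because $g_{m+1},h_{m+1},\gamma_{m+1}$ agree with $g_m,h_m,\gamma_m$ on $\{\|\cdot\|_q\leq m\}$, the restriction of $X^{(m+1)}$ to $[0,\theta_m)$ satisfies the same radius-$m$ truncated equation as $X^{(m)}$, and the uniqueness part of Theorem~\ref{thn} forces them to coincide q.s.; in particular $\{\theta_m\}$ is nondecreasing. I then set $\theta_e=\lim_{m\to\infty}\theta_m$ and define $X(t)=X^{(m)}(t)$ for $t\in[0,\theta_m)$, together with $X(t)=\zeta(t)$ on $(-\infty,0]$. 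By the consistency just established this is a well-defined, continuous, $\mathcal{F}_t$-adapted process on $(-\infty,\theta_e)$ that solves \eqref{1} up to each $\theta_m$, hence is a local strong solution in the sense of the Definition. Uniqueness of the (maximal) local solution follows by applying the same localized uniqueness argument to any two such solutions on each $[0,\theta_m)$.

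The hard part will be carrying the patching and uniqueness arguments quasi-surely in the sublinear $G$-expectation framework: the identifications $X^{(m+1)}=X^{(m)}$ hold only outside polar sets, so I must verify that the countable union over $m$ of these negligible sets is again polar, so that $X$ is genuinely well defined q.s. and the resulting $\{\theta_m\}$ is a legitimate nondecreasing sequence of stopping times with $\theta_m\to\theta_e$ q.s. A secondary technical point I expect to check carefully is the Lipschitz property of the radial retraction $\pi_m$ in the fading-memory norm $\|\cdot\|_q$, since $C_q$ is an $L^\infty$-type space where the retraction constant must be established rather than taken for granted; this is precisely what is needed for \eqref{cn} to yield a global Lipschitz bound for the truncated coefficients and thus to license the application of Theorem~\ref{thn}.
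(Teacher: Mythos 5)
Your proposal follows essentially the same route as the paper's proof: truncate the coefficients by composing with the radial retraction onto the ball of radius $m$ in $\|\cdot\|_q$, invoke Theorem \ref{thn} for the truncated equations, patch the resulting solutions along a nondecreasing sequence of stopping times $\theta_m\to\theta_e$, and settle uniqueness by a localized Gronwall-type argument outside polar sets. The one notable difference --- defining $\theta_m$ as the exit time of the segment norm $\|X^{(m)}_t\|_q$ from the ball rather than via $|X^{(m)}(t)|>m$ as the paper does --- is a sound (indeed cleaner) variant, since it makes the truncated and original coefficients coincide automatically before $\theta_m$, a point the paper's choice of stopping time leaves slightly mismatched with its $\|\cdot\|_q$-based truncation.
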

\begin{proof}
For any $m\geq m_0$, we define the following stopping time
\begin{equation*} \theta_m=\inf\{t\geq0, |X(t)|>m\},
\end{equation*}
with $\inf\emptyset=\infty.$ Let equation \eqref{1} has two
solutions $X(t)$ and $Y(t)$ where
\begin{equation*}
\theta_m=\inf\{t\geq0, |X(t)|>m\} \wedge \inf\{t\geq0, |Y(t)|>m\}.
\end{equation*}
By using the inequality $(\sum_{i=1}^3a_i)^2\leq3\sum_{i=1}^3a^2_i$,
from \eqref{1} we have
\begin{equation*}\begin{split}
|Y(t\wedge \theta_m)-X(t\wedge
\theta_m)|^2&\leq3\Big|\int_0^{s\wedge
\theta_m}[g(Y_s)-g(X_s)]ds\Big|^2+3\Big|\int_0^{s\wedge
\theta_m}[h(Y_s)-h(X_s)]d \langle B, B \rangle
(s)\Big|^2\\&+3\Big|\int_0^{s\wedge
\theta_m}[\gamma(Y_s)-\gamma(X_s)]d B(s)\Big|^2.
\end{split}\end{equation*}
Taking the G-expectation on both sides, using the Holder inequality,
lemma \ref{l3}, lemma \ref{l4} and condition \eqref{cn}, there exist
positive constants $c$, $c_1$ and $c_2$ such that
\begin{equation*}\begin{split}
\hat{\mathbb{E}}\Big[\sup_{0\leq s\leq t}|Y(s\wedge
\theta_m)-X(s\wedge
\theta_m)|^2\Big]&\leq3c\hat{\mathbb{E}}\int_0^{t\wedge
\theta_m}|g(Y_s)-g(X_s)|^2ds+3c_1\hat{\mathbb{E}}\int_0^{t\wedge
\theta_m}|h(Y_s)-h(X_s)|^2d s\\&+3c_2\hat{\mathbb{E}}\int_0^{t\wedge
\theta_m}[\gamma(Y_s)-\gamma(X_s)|^2d s\\& \leq
3K_m(c+c_1+c_2)\hat{\mathbb{E}}\int_0^{t\wedge
\theta_m}\|Y_s-X_s\|_q^2d s\\& \leq
3K_m(c+c_1+c_2)\int_0^{t}\hat{\mathbb{E}}\Big[\sup_{0\leq s\leq
t}|Y(s\wedge \theta_m)-X(s\wedge \theta_m)|^2\Big]d s
\end{split}\end{equation*}
The above last inequality is obtained by using lemma \eqref{Lf3}.
Finally, the Grownwall inequity yields
\begin{equation*}
\hat{\mathbb{E}}\Big[\sup_{0\leq s\leq t}|Y(s\wedge
\theta_m)-X(s\wedge \theta_m)|^2\Big]=0,
\end{equation*}
which gives that $Y(t\wedge \theta_m)=X(t\wedge \theta_m)$
quasi-surely. According to the definition of $\theta_m$,
$\{\theta_m:m\geq m_0\}$ is a non-decreasing sequence and as
$m\rightarrow \infty$ quasi-surely $\theta_m\rightarrow
\theta_{\infty}\leq \theta_e$. We therefore have $Y(t)=X(t)$ for all
$t\in(-\infty,\theta_e)$. The uniqueness has been proved. To prove
the existence of solutions, for any sufficiently large $m_0$ and
$m\geq m_0,$ we set the truncation functions $g_m$, $h_m$ and
$\gamma_m$ as follows
\begin{equation*}\begin{split}&g_m(\varphi)= \begin{cases}
    g(\varphi), & \text{if $\|\varphi\|_q\leq m$ ;}
    \\    g(\frac{m\varphi}{\|\varphi\|_q}), & \text{if $\|\varphi\|_q> m$,}
  \end{cases}\\&
h_m(\varphi)= \begin{cases}
    h(\varphi), & \text{if $\|\varphi\|_q\leq m$ ;}
    \\    h(\frac{m\varphi}{\|\varphi\|_q}), & \text{if $\|\varphi\|_q> m$,}
  \end{cases}\\&
  \gamma_m(\varphi)= \begin{cases}
    \gamma(\varphi), & \text{if $\|\varphi\|_q\leq m$ ;}
    \\    \gamma(\frac{m\varphi}{\|\varphi\|_q}), & \text{if $\|\varphi\|_q> m$.}
  \end{cases}\\&
  \end{split}\end{equation*}
Then $g_m$, $h_m$ and $\gamma_m$ satisfy the assumptions \eqref{cf1}
and \eqref{cf2}. By virtue of theorem \ref{thn}, problem
\begin{equation}\label{n} X^{(m)}(t) = \zeta(0)+\int_0^t g_m ( X^{(m)}_{s}) ds +\int_0^t  h_m ( X^{(m)}_{s}) d \langle B, B
\rangle (s) + \int_0^t \gamma_m ( X^{(m)}_{s}) dB (s),
\end{equation}
admits a unique bounded solution $X^{(m)}(t)$, which is continuous
and $\mathcal{F}_t$-adopted. Notice that for $0\leq t\leq\theta_m$,
we have $g_n(\varphi^n)=g(\varphi^n)$, $h_n(\varphi^n)=h(\varphi^n)$
and $\gamma_n(\varphi^n)=\gamma(\varphi^n)$. By using the inequality
$(\sum_{i=1}^3a_i)^2\leq3\sum_{i=1}^3a^2_i$, from \eqref{1} and
\eqref{n} for each $t\in[0,\theta_m]$, we get
\begin{equation*}\begin{split}
|X^{(m)}(t)-X(t)|^2&\leq3\Big|\int_0^{t}[g(X^{(m)}_s)-g(X_s)]ds\Big|^2+3\Big|\int_0^{t}[h(X^{(m)}_s)-h(X_s)]d
\langle B, B \rangle
(s)\Big|^2\\&+3\Big|\int_0^{t}[\gamma(X^{(m)}_s)-\gamma(X_s)]d
B(s)\Big|^2.
\end{split}\end{equation*}
Taking the G-expectation on both sides, using the Holder inequality,
lemma \ref{l3}, lemma \ref{l4}, condition \eqref{cn} and lemma
\eqref{Lf3}, by straightforward calculations in a similar way as
above, we derive
\begin{equation*}\begin{split}
\hat{\mathbb{E}}\Big[\sup_{0\leq s\leq
t}|X^{(m)}(s)-X(s)|^2\Big]&\leq3c\hat{\mathbb{E}}\int_0^{t}|g(X^{(m)}_s)-g(X_s)|^2ds+3c_1\hat{\mathbb{E}}\int_0^{t}|h(X^{(m)}_s)-h(X_s)|^2d
s\\&+3c_2\hat{\mathbb{E}}\int_0^{t}[\gamma(X^{(m)}_s)-\gamma(X_s)|^2d
s\\& \leq
3K_n(c+c_1+c_2)\hat{\mathbb{E}}\int_0^{t}\|X^{(n)}_s-X_s\|_q^2d s\\&
\leq 3K_m(c+c_1+c_2)\int_0^{t}\hat{\mathbb{E}}\Big[\sup_{0\leq s\leq
t}|X^{(m)}(s)-X(s)|^2\Big]d s\\&
\end{split}\end{equation*}
By virtue of the Grownwall inequity, it follows
\begin{equation}\label{n1}
\hat{\mathbb{E}}\Big[\sup_{0\leq s\leq
t}|X^{(m)}(s)-X(s)|^2\Big]=0,\,\,\,\,\,0\leq t\leq \theta_m,
\end{equation}
The above expression means that for all $t\in[0,\theta_m]$,
$X^{(m)}(t)=X(t)$ quasi-surely. Therefore we have $X^{(m)}(t)=X(t)$,
for all $t\in(-\infty,\theta_m]$ quasi-surely. Also, by using lemma
\ref{Lf3} and expression \eqref{n1} we have
\begin{equation*}
\hat{\mathbb{E}}\|X^{(m)}_t-X_t\|^2_q\leq\hat{\mathbb{E}}\Big[\sup_{0\leq
s\leq t}|X^{(m)}(s)-X(s)|^2\Big]=0,\,\,\,\,\,0\leq t\leq \theta_m,
\end{equation*}
which implies that for all $t\in[0,\theta_m]$, $X^{(m)}_t=X_t$
quasi-surely. Next to show that $X(t)$ is the solution of \eqref{1},
by $X^{(m)}(t\wedge \theta_m)=X(t\wedge \theta_m)$,
$X^{(m)}_{t\wedge \theta_m}=X_{t\wedge \theta_m}$ and \eqref{n}, it
follows
\begin{equation*}\begin{split} X(t\wedge \theta_m) &= \zeta(0)+\int_0^{t\wedge \theta_m} g_m ( X_{s}) ds
+\int_0^{t\wedge \theta_m}  h_m ( X_{s}) d \langle B, B \rangle (s)
+ \int_0^{t\wedge \theta_m} \gamma_m ( X_{s}) dB (s)\\& =
\zeta(0)+\int_0^{t\wedge \theta_m} g ( X_{s}) ds +\int_0^{t\wedge
\theta_m}  h ( X_{s}) d \langle B, B \rangle (s) + \int_0^{t\wedge
\theta_m} \gamma ( X_{s}) dB (s),
\end{split}\end{equation*}
 letting $m\rightarrow\infty$, it follows that for
any $t\in[0,\theta_e)$,
\begin{equation*}\begin{split} X(t) & =
\zeta(0)+\int_0^{t} g ( X_{s}) ds +\int_0^{t}  h ( X_{s}) d \langle
B, B \rangle (s) + \int_0^{t} \gamma ( X_{s}) dB (s),
\end{split}\end{equation*}
that is, $X(t)$, $t\in(-\infty,\theta_e)$ is a local solution of
equation \eqref{1}. The proof stands completed.
\end{proof}
To examine the global existence and uniqueness of solutions for
problem \eqref{1}, we assume the following conditions.
\begin{itemize}
\item[${\bf(A_{1})}$] For any probability measure $\mu_1,\mu_2,\mu_3\in N_{2q}$
there exists positive constants $\lambda_i$, $i=1,2,..,5$ such that
for any $\psi,\varphi\in C_q((-\infty,0];\R^d)$, we have
\begin{equation}\label{c1}
[\psi(0)-\varphi(0)]^{\tau}[g(\psi)-g(\varphi)]\leq
-\lambda_1|\psi(0)-\varphi(0)|^2+\lambda_2\int_{-\infty}^0|\psi(\alpha)-\varphi(\alpha)|^2\mu_1(d\alpha),
\end{equation}
\begin{equation}\label{c2}
[\psi(0)-\varphi(0)]^{\tau}[h(\psi)-h(\varphi)]\leq
-\lambda_3|\psi(0)-\varphi(0)|^2+\lambda_4\int_{-\infty}^0|\psi(\alpha)-\varphi(\alpha)|^2\mu_2(d\alpha),
\end{equation}
and
\begin{equation}\label{c3}
|\gamma(\psi)-\gamma(\varphi)|^2\leq
\lambda_5\int_{-\infty}^0|\psi(\alpha)-\varphi(\alpha)|^2\mu_3(d\alpha).
\end{equation}
\end{itemize}
The upcoming lemma will be used in several places all through this
article.
\begin{lem}\label{Lf2} Let $p\geq 2$, $\lambda< pq$ and for any
$i\in \Z^+$, $\mu_i\in N_{l}$. Then for any $\zeta\in
C_q((-\infty,0];\R^d)$,
\begin{equation}\label{fn}
\int_0^{t}\int_{-\infty}^0|X(s+\alpha)|^p\mu_i(d\alpha)ds \leq
\frac{\mu_i^{(2q)}}{2q}\|\zeta\|^p_q
+\int_{0}^{t}|X(s)|^pds,\end{equation}
\begin{equation}\label{ffn}
\int_0^{t}\int_{-\infty}^0e^{\lambda
s}|X(s+\alpha)|^p\mu_i(d\alpha)ds \leq
\frac{\mu_i^{(pq)}}{2q-\lambda}\|\zeta\|^p_q
+\mu_i^{(pq)}\int_{0}^{t}e^{\lambda s}|X(s)|^pds.\end{equation}
\end{lem}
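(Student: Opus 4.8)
The plan is to prove both estimates by the same device: for each fixed $s\in[0,t]$, split the inner $\mu_i$-integral over $\alpha\in(-\infty,0]$ according to whether the time argument $s+\alpha$ is nonpositive or positive. On $\{\alpha\le -s\}$ one has $s+\alpha\le 0$, so $X(s+\alpha)=\zeta(s+\alpha)$ is given by the initial data, while on $\{-s<\alpha\le 0\}$ the argument $s+\alpha$ lies in $(0,s]$ and $X(s+\alpha)$ is a genuine value of the process. The only pointwise input is the defining bound of the fading-memory norm, namely $|\zeta(\beta)|\le e^{-q\beta}\|\zeta\|_q$ for $\beta\le 0$, whence $|\zeta(s+\alpha)|^p\le e^{-pq(s+\alpha)}\|\zeta\|_q^p$. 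Since every integrand is nonnegative, all interchanges of the $ds$-integral with the $\mu_i(d\alpha)$-integral below are legitimate by Tonelli, and finiteness of the constants is guaranteed by $\mu_i\in N_l$ (with $l\ge pq$, so $\mu_i^{(pq)}=\int_{-\infty}^0e^{-pq\alpha}\mu_i(d\alpha)<\infty$).

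For the \emph{history part} I would substitute $X=\zeta$, apply the norm bound, and factor the exponential as $e^{-pq(s+\alpha)}=e^{-pqs}e^{-pq\alpha}$. Integrating $e^{-pq\alpha}$ against $\mu_i$ over $(-\infty,-s]\subset(-\infty,0]$ produces at most $\mu_i^{(pq)}$, and the remaining $s$-integral is $\int_0^t e^{-pqs}\,ds\le 1/(pq)$ in \eqref{fn}, while in \eqref{ffn} the extra weight merges into $\int_0^t e^{(\lambda-pq)s}\,ds\le 1/(pq-\lambda)$, the latter finite precisely because $\lambda<pq$. This yields the $\|\zeta\|_q^p$-term, the coefficient being the elementary exponential integral times $\mu_i^{(pq)}$.

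For the \emph{current part} I would invoke Tonelli to rewrite the double integral as $\int_{-t}^0\big(\int_{-\alpha}^{t}\cdots\,ds\big)\mu_i(d\alpha)$, noting that the region $\{-s<\alpha\le0,\,0\le s\le t\}$ forces $\alpha\in(-t,0]$ and $s\in(-\alpha,t]$. Then I change variables $u=s+\alpha$ in the inner integral, so its range becomes $(0,t+\alpha]$. Because the integrand is nonnegative and $t+\alpha\le t$, I enlarge $\int_0^{t+\alpha}$ to $\int_0^t$. In \eqref{fn} the leftover $\alpha$-integral is $\int_{-t}^0\mu_i(d\alpha)\le 1$ since $\mu_i$ is a probability measure, giving exactly $\int_0^t|X(s)|^p\,ds$; in \eqref{ffn} the substitution leaves a factor $e^{-\lambda\alpha}$, and $\int_{-t}^0 e^{-\lambda\alpha}\mu_i(d\alpha)\le\mu_i^{(pq)}$ (using $\lambda<pq$ and $\alpha\le0$), producing $\mu_i^{(pq)}\int_0^t e^{\lambda s}|X(s)|^p\,ds$. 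Summing the two parts gives the claimed bounds.

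The exponential integrals and the probability-measure normalization are harmless; the one place demanding genuine care is the current part, where I must fix the region of integration correctly after the split and perform the substitution $u=s+\alpha$ so that the $u$-range is $(0,t+\alpha]$ before it is enlarged to $(0,t]$ by nonnegativity. A minor point is the bookkeeping of the history constant: the value emerging directly from $\int_0^\infty e^{-pqs}\,ds$ (resp. $\int_0^\infty e^{(\lambda-pq)s}\,ds$) is $\mu_i^{(pq)}/(pq)$ (resp. $\mu_i^{(pq)}/(pq-\lambda)$), which coincides with the coefficients displayed in \eqref{fn}--\eqref{ffn} in the case $p=2$; for general $p\ge2$ the argument is identical with $pq$ in place of $2q$.
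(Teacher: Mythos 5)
Your proposal is correct and follows essentially the same route as the paper's own proof: both split the inner integral at $\alpha=-s$, bound the history part via $e^{pq(s+\alpha)}|\zeta(s+\alpha)|^p\le\|\zeta\|_q^p$ and the elementary exponential integrals, and handle the current part by Tonelli plus the substitution $u=s+\alpha$ with the range enlarged to $(0,t]$. Your closing remark about the constants also matches the paper, whose proof in fact produces $\mu_i^{(pq)}/(pq)$ and $\mu_i^{(pq)}/(pq-\lambda)$ even though the displayed statement writes $2q$ in those places.
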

\begin{proof} Noticing that $\zeta\in C_q((-\infty,0];\R^d)$ and for any
$i\in \Z^+$, $\mu_i\in N_{pq}$, using the definition of norm and the
Fubini theorem, we derive
\begin{equation*}\begin{split}
&\int_0^{t}\int_{-\infty}^0|X(s+\alpha)|^p\mu_i(d\alpha)ds\\&
=\int_0^{t}\Big[\int_{-\infty}^{-s}e^{pq(s+\alpha)}|X(s+\alpha)|^pe^{-pq(s+\alpha)}\mu_i(d\alpha)+\int_{-s}^0|X(s+\alpha)|^p\mu_i(d\alpha)\Big]ds\\&
\leq \|\zeta\|^p_q \int_0^{t}
e^{-pqs}ds\int_{-\infty}^{0}e^{-pq\alpha}\mu_i(d\alpha)+\int_{-\infty}^0\mu_i(d\alpha)\int_0^{t}|X(s)|^pds,
\end{split}\end{equation*}
by noticing that $\int_{-\infty}^0\mu_i(d\alpha)=1$ and
$\int_{-\infty}^{0}e^{-pq\alpha}\mu_i(d\alpha)=\mu_i^{(pq)}$,
$i\in\Z^+$, we derive
\begin{equation*}
\int_0^{t}\int_{-\infty}^0|X(s+\alpha)|^p\mu_i(d\alpha)ds \leq
\frac{\mu_i^{(pq)}}{pq}\|\zeta\|^p_q
+\int_{0}^{t}|X(s)|^pds.\end{equation*} The proof of \eqref{fn} is
complete. To prove \eqref{ffn}, we use similar arguments as used
above and proceed as follows
\begin{equation*}\begin{split}
&\int_0^{t}\int_{-\infty}^0e^{\lambda
s}|X(s+\alpha)|^p\mu_i(d\alpha)ds\\& =\int_0^{t}e^{\lambda
s}ds\Big[\int_{-\infty}^{-s}|z(s+\alpha)|^p\mu_i(d\alpha)+\int_{-s}^{0}|X(s+\alpha)|^p\mu_i(d\alpha)\Big]\\&
=\int_0^{t}e^{\lambda
s}ds\int_{-\infty}^{-s}|X(s+\alpha)|^p\mu_i(d\alpha)+\int_{-t}^{0}\mu_i(d\alpha)\int_{-\alpha}^{t}e^{\lambda
s}|X(s+\alpha)|^pds\\& \leq \int_0^{t}e^{\lambda
s}ds\int_{-\infty}^{-s}e^{2q(s+\alpha)}|X(s+\alpha)|^pe^{-2q(s+\alpha)}\mu_i(d\alpha)+\int_{-\infty}^{0}\mu_i(d\alpha)\int_{0}^{t}e^{\lambda
(s-\alpha)}|X(s)|^pds\\& \leq \|\zeta\|^p_q
\int_0^{t}e^{-(pq-\lambda)
s}ds\int_{-\infty}^{0}e^{-pq\alpha}\mu_i(d\alpha)+\int_{-\infty}^{0}e^{-\lambda
\alpha}\mu_i(d\alpha)\int_{0}^{t}e^{\lambda s}|X(s)|^pds,
\end{split}\end{equation*}
by using the definition $\mu^{(m)}=\int_{-\infty}^{0}e^{-m
\alpha}\mu(d\alpha)$ and noticing that $pq>\lambda$, we have
\begin{equation*}
\int_0^{t}\int_{-\infty}^0e^{\lambda
s}|X(s+\alpha)|^p\mu_i(d\alpha)ds \leq
\frac{\mu_i^{(2q)}}{pq-\lambda}\|\zeta\|^2_q
+\mu_i^{(pq)}\int_{0}^{t}e^{\lambda s}|X(s)|^pds.\end{equation*} The
proof of \eqref{ffn} stands completed.
\end{proof}
\begin{thm} Let assumptions \eqref{cn} and $A_{1}$ hold. Then the G-SFDEwID \eqref{1} has a continuous and $\mathcal{F}_t$-adapted global
solution.
\end{thm}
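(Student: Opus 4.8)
The plan is to combine the local existence--uniqueness result of Theorem \ref{thm1}, which already applies since \eqref{cn} holds, with an a priori second-moment bound furnished by the monotone-type hypotheses $A_1$; this bound will prevent the local solution from exploding and force the explosion time to be infinite. Concretely, Theorem \ref{thm1} produces a unique continuous $\mathcal{F}_t$-adapted local solution $X(t)$ on $(-\infty,\theta_e)$, and uniqueness is inherited from that theorem, so it suffices to prove that $\theta_e=\infty$ quasi-surely. For $m\ge m_0$ set $\theta_m=\inf\{t\ge0:|X(t)|>m\}$; since $\{\theta_m\}$ is nondecreasing with limit $\theta_e$, I would show $\hat{C}(\theta_m\le T)\to0$ as $m\to\infty$ for each fixed $T$.

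First I would apply the G-It\^o formula to $e^{\lambda t}|X(t)|^2$ with $\lambda<2q$ (permissible by the blanket assumption with $p=2$). Taking $\varphi\equiv0\in C_q$, the values $g(0),h(0),\gamma(0)$ are finite because \eqref{cn} makes $g,h,\gamma$ locally bounded, so I can split $2X(s)^\tau g(X_s)=2X(s)^\tau[g(X_s)-g(0)]+2X(s)^\tau g(0)$, apply \eqref{c1} to the first piece and Young's inequality (Lemma \ref{l8}) to the second; \eqref{c2} and \eqref{c3} are used the same way for the $d\langle B,B\rangle$ and $dB$ terms. The $-\lambda_1|X(s)|^2$ and $-\lambda_3|X(s)|^2$ contributions carry the favourable sign, and since $\langle B,B\rangle$ is increasing the term $-2\lambda_3|X(s)|^2\,d\langle B,B\rangle$ is nonpositive and may be discarded, while the remaining positive integrands against $d\langle B,B\rangle$ are dominated using $d\langle B,B\rangle\le\bar\sigma^2\,ds$.

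Next I would take the supremum over $[0,t\wedge\theta_m]$ and then the G-expectation. The stochastic integral $\int_0^{\cdot}2e^{\lambda s}X(s)^\tau\gamma(X_s)\,dB(s)$ is controlled by Lemma \ref{l3} together with Lemma \ref{l7} to absorb the resulting supremum into the left-hand side, and the $d\langle B,B\rangle$ integral by Lemma \ref{l4}. The delay integrals $\int_0^{t}\int_{-\infty}^0 e^{\lambda s}|X(s+\alpha)|^2\mu_i(d\alpha)\,ds$ coming from \eqref{c1}--\eqref{c3} are precisely of the form handled by Lemma \ref{Lf2}: estimate \eqref{ffn} replaces each of them by $\frac{\mu_i^{(2q)}}{2q-\lambda}\|\zeta\|_q^2+\mu_i^{(2q)}\int_0^{t}e^{\lambda s}|X(s)|^2\,ds$. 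Collecting all terms yields an inequality of the form
\[
\hat{\mathbb{E}}\Big[\sup_{0\le s\le t\wedge\theta_m}e^{\lambda s}|X(s)|^2\Big]\le C_1(1+\|\zeta\|_q^2)+C_2\int_0^{t}\hat{\mathbb{E}}\Big[\sup_{0\le r\le s\wedge\theta_m}e^{\lambda r}|X(r)|^2\Big]\,ds,
\]
with $C_1,C_2$ independent of $m$; the Gronwall inequality then gives $\hat{\mathbb{E}}\big[\sup_{0\le s\le T\wedge\theta_m}|X(s)|^2\big]\le C(T,\zeta)$ uniformly in $m$.

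Finally, on the event $\{\theta_m\le T\}$ continuity of $X$ forces $|X(\theta_m)|=m$, so $m^2\hat{C}(\theta_m\le T)\le\hat{\mathbb{E}}\big[\sup_{0\le s\le T\wedge\theta_m}|X(s)|^2\big]\le C(T,\zeta)$ by the G-Markov argument of Lemma \ref{l2}; hence $\hat{C}(\theta_m\le T)\le C(T,\zeta)/m^2\to0$, and letting $T\to\infty$ shows $\theta_e=\infty$ quasi-surely, so the local solution is in fact global. The main obstacle will be the uniform-in-$m$ bookkeeping of constants: one must verify that the favourable $-\lambda_1,-\lambda_3$ terms and the factors $\mu_i^{(2q)}$ produced by Lemma \ref{Lf2} genuinely close the Gronwall loop, while simultaneously keeping the $d\langle B,B\rangle$ contributions on the correct side of the inequality in the G-setting, where $\langle B,B\rangle$ is only known to be increasing with $d\langle B,B\rangle\le\bar\sigma^2\,ds$.
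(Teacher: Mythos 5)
Your proposal is correct and follows essentially the same route as the paper: Theorem \ref{thm1} gives the maximal local solution, an a priori second-moment bound up to $t\wedge\theta_m$ is obtained from the G-It\^o formula together with assumption $A_1$ applied against $\varphi\equiv 0$, the delay integrals are handled by Lemma \ref{Lf2}, Gronwall closes the loop with constants independent of $m$, and the G-Markov inequality of Lemma \ref{l2} then yields $\hat{C}(\theta_m\le T)\to 0$, hence $\theta_e=\infty$ q.s. The only differences are technical: the paper applies It\^o to the unweighted $|X(t)|^2$ and takes plain G-expectation, so the $dB$-integral vanishes by the zero-mean property of the G-It\^o integral and the simpler estimate \eqref{fn} suffices, whereas you work with $e^{\lambda t}|X(t)|^2$, take the supremum inside the expectation, and therefore need Lemma \ref{l3} with Lemma \ref{l7} to absorb the martingale term and the weighted estimate \eqref{ffn} --- more bookkeeping, producing a supremum bound that is stronger than what the Chebyshev step actually requires.
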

\begin{proof} For any initial data $\zeta\in C_q$, in view of local
Lipschitz condition $A_1$, theorem \ref{thm1} gives that \eqref{1}
admits a unique maximal local strong solution $X(t)$ on
$t\in(-\infty,\theta_e)$ and this solution is continuous for any
$t\in(-\infty,\theta_e)$ and $\mathcal{F}_t$-adopted. To prove that
this solution is global, we only need to show that $\theta_e=\infty$
q.s. Note that $\theta_m$ is increasing as $m\rightarrow\infty$ and
$\theta_m\rightarrow \theta_\infty\leq\theta_e$ q.s. If we can prove
that $\theta_\infty=\infty$ q.s., then $\theta_e=\infty$ q.s., which
implies that $X(t)$ is global. This is equivalent to proving that as
$m\rightarrow\infty$, for any $T>0$, $\hat{C}(\theta_m\leq
T)\rightarrow0$. Applying the G-It\^o formula to $|X(t)|^2$, taking
G-expectation on both sides, using properties of G-It\^o integral
and lemma \ref{l4}, there exists a positive constant $k_1$ such that
\begin{equation}\begin{split}\label{3} \hat{\mathbb{E}}|X(t\wedge\theta_m)|^2&\leq
\hat{\mathbb{E}}|X(0)|^2+\hat{\mathbb{E}}\int_0^{t\wedge\theta_m}2X^\tau(s)
g(X_s)ds\\&+k_1\hat{\mathbb{E}}\int_0^{t\wedge\theta_m}\Big[2X^\tau(s)
h(X_s)+|\gamma(X_s)|^2\Big]ds.
\end{split}\end{equation}
By using condition \eqref{c1} and the fundamental inequality
$2a_1a_2\leq \sum_{i=1}^2 a^2_i$ we derive
\begin{equation}\begin{split}\label{4}
X^{\tau}(t)g(X_t)&\leq
-(\lambda_1-\frac{1}{2})|X(t)|^2+\frac{1}{2}|g(0)|^2+\lambda_2\int_{-\infty}^0|X(t+\alpha)|^2\mu_1(d\alpha).
\end{split}\end{equation}
Similar arguments follows
\begin{equation}\begin{split}\label{5}
X^{\tau}(t)h(X_t)&\leq
-(\lambda_3-\frac{1}{2})|X(t)|^2+\frac{1}{2}|h(0)|^2+\lambda_4\int_{-\infty}^0|X(t+\alpha)|^2\mu_2(d\alpha).
\end{split}\end{equation}
By using condition\eqref{c3} and the basic inequality $(\sum_{i=1}^2
a_i)^2\leq 2\sum_{i=1}^2 a^2_i$ we get
\begin{equation}\begin{split}\label{6}
|\gamma(X_t)|^2&\leq2|\gamma(0)|^2+2\lambda_5\int_{-\infty}^0|X(t+\alpha)|^2\mu_3(d\alpha).
\end{split}\end{equation}
On substituting \eqref{4}, \eqref{5} and \eqref{6} in \eqref{3}, we
derive
\begin{equation*}\begin{split}
\hat{\mathbb{E}}|X(t\wedge\theta_m)|^2& \leq
\hat{\mathbb{E}}|X(0)|^2+[|g(0)|^2+k_1|h(0)|^2+2k_1|\gamma(0)|^2]T\\&+(k_1-2\lambda_1-2k_1\lambda_3+1)\hat{\mathbb{E}}\int_0^{t\wedge\theta_m}|X(s)|^2ds
+2\lambda_2\hat{\mathbb{E}}\int_0^{t\wedge\theta_m}\int_{-\infty}^0|X(s+\alpha)|^2\mu_1(d\alpha)ds\\&
+2k_1\lambda_4\hat{\mathbb{E}}\int_0^{t\wedge\theta_m}\int_{-\infty}^0|X(s+\alpha)|^2\mu_2(d\alpha)ds\\&
+2k_1\lambda_5\hat{\mathbb{E}}\int_0^{t\wedge\theta_m}\int_{-\infty}^0|X(s+\alpha)|^2\mu_3(d\alpha)ds,
\end{split}\end{equation*}
letting
$K_1=\hat{\mathbb{E}}|X(0)|^2+[|g(0)|^2+k_1|h(0)|^2+2k_1|\gamma(0)|^2]T$,
we obtain
\begin{equation}\begin{split}\label{7}
\hat{\mathbb{E}}|X(t\wedge\theta_m)|^2&\leq
K_1+(-2\lambda_1-2k_1\lambda_3+k_1+1)\hat{\mathbb{E}}\int_0^{t\wedge\theta_m}|X(s)|^2ds\\&
+2\lambda_2\hat{\mathbb{E}}\int_0^{t\wedge\theta_m}\int_{-\infty}^0|X(s+\alpha)|^2\mu_1(d\alpha)ds
+2\lambda_4k_1\hat{\mathbb{E}}\int_0^{t\wedge\theta_m}\int_{-\infty}^0|X(s+\alpha)|^2\mu_2(d\alpha)ds\\&
+2k_1\lambda_5\hat{\mathbb{E}}\int_0^{t\wedge\theta_m}\int_{-\infty}^0|X(s+\alpha)|^2\mu_3(d\alpha)ds.
\end{split}\end{equation}
From result \eqref{fn} of lemma \ref{Lf2}, we have
\begin{equation}\label{f}
\int_0^{t\wedge\theta_m}\int_{-\infty}^0|X(s+\alpha)|^2\mu_i(d\alpha)ds
\leq \frac{1}{2q}\|\zeta\|^2_q\mu_i^{(2q)}
+\int_{0}^{t}|X(s\wedge\theta_m)|^2ds.\end{equation} Substitutions
for $i=1,2,3$ in \eqref{7} give
\begin{equation*}\begin{split}
\hat{\mathbb{E}}|X(t\wedge\theta_m)|^2&\leq
K_1+\frac{1}{q}[\lambda_2\mu_1^{(2q)}+k_1\lambda_4\mu_2^{(2q)}+k_1\lambda_5\mu_3^{(2q)}]\hat{\mathbb{E}}\|\zeta\|^2_q\\&
+(k_1+1-2\lambda_1+2\lambda_2-2k_1\lambda_3+2k_1\lambda_4+2k_1\lambda_5)\hat{\mathbb{E}}\int_0^{t}|X(s\wedge\theta_m)|^2ds\\&
= K_2+ K_3\int_0^{t}\hat{\mathbb{E}}|X(s\wedge\theta_m)|^2ds
\end{split}\end{equation*}
where
$K_2=K_1+\frac{1}{q}[\lambda_2\mu_1^{(2q)}+k_1\lambda_4\mu_2^{(2q)}+k_1\lambda_5\mu_3^{(2q)}]\|\zeta\|^2_q$
and
$K_3=k_1+1-2\lambda_1+2\lambda_2-2k_1\lambda_3+2k_1\lambda_4+2k_1\lambda_5$.
By virtue of the Grownwall inequality,
\begin{equation*}
\hat{\mathbb{E}}|X(t\wedge\theta_m))|^2\leq K_2e^{K_3t},
\end{equation*}
taking $t=T$ yields
\begin{equation*}
\hat{\mathbb{E}}|X(T\wedge\theta_m)|^2\leq K_2e^{K_3T}.
\end{equation*}
By using lemma \ref{l2}, the definition of $\theta_m$ and the above
inequality, it follows
\begin{equation*}\begin{split}
\hat{C}(\theta_m\leq T)&=\hat{C}(\theta_m\leq
T,|X(T\wedge\theta_m)|>m)\\&\leq\frac{1}{m^2}\hat{\mathbb{E}}|X(\theta_m)|^21_{\theta_m\leq
T}\\&=\frac{1}{m^2}\hat{\mathbb{E}}|X(T\wedge\theta_m)|^21_{\theta_m\leq
T}\\& =\frac{1}{m^2}\hat{\mathbb{E}}|X(T\wedge\theta_m)|^2\\& \leq
\frac{1}{m^2}.K_2e^{K_3T}.
\end{split}\end{equation*}
Taking limits $m\rightarrow\infty$ gives
\begin{equation*}
\lim_{m\rightarrow\infty}\hat{C}(\theta_m\leq T)=0,
\end{equation*}
which implies that G-SFDEwID \eqref{1} admits a unique global
solution $X(t)$ on $(-\infty,\infty)$ quasi-surely. The proof stands
completed.
\end{proof}
\section{Asymptotic properties of solution}
In this section, the mean square boundedness for solution of the
G-SFDEwID is proved. Under different initial data, the convergence
of solutions is derived.
\begin{thm}\label{thm2} Let $X(t)$ be the unique solution of problem \eqref{1} with initial data $\zeta\in C_q((-\infty,0];\R^d)$.
 Assume assumption $A_{1}$ holds. Let
$\lambda_i$, $i=1,2,..,5$ satisfy $2\lambda_1>
2\lambda_2\mu_1^{(2q)}+2k_1\lambda_4\mu_2^{(2q)}+k_1\lambda_5\mu_3^{(2q)}-2k_1\lambda_3$.
Then there exists $\lambda\in (0,(2\lambda_1+2k_1\lambda_3
-2\lambda_2\mu_1^{(2q)}-2k_1\lambda_4\mu_2^{(2q)}-k_1\lambda_5\mu_3^{(2q)})\wedge
2q)$ such that
\begin{equation}\label{f3}
\hat{\mathbb{E}}[|X(t)|^2]\leq K_4+K_5e^{-\lambda t},\end{equation}
where
\begin{equation*}K_4=\frac{1}{\lambda}\Big(\frac{1}{\epsilon}|g(0)|^2+\frac{k_1}{\epsilon_1}|h(0)|^2+\frac{k_1}{\epsilon_2}|\gamma(0)|^2\Big)\end{equation*}
and \begin{equation*}K_5=\hat{\mathbb{E}}|X(0)|^2+
\frac{2\lambda_2\mu_1^{(2q)}}{2q-\lambda}\hat{\mathbb{E}}\|\zeta\|_q^2
+
\frac{2k_1\lambda_4\mu_2^{(2q)}}{2q-\lambda}\hat{\mathbb{E}}\|\zeta\|_q^2
+\frac{k_1\lambda_5\mu_3^{(2q)}}{(2q-\lambda)(1-\epsilon_2)}\hat{\mathbb{E}}\|\zeta\|_q^2.\end{equation*}
and $\epsilon$,$\epsilon_1$ and $\epsilon_2$ are sufficiently small
such that
\begin{equation*}2\lambda_1-\epsilon-\lambda-k_1\epsilon_1+2k_1\lambda_3
-2\lambda_2\mu_1^{(2q)}-2k_1\lambda_4\mu_2^{(2q)}-\frac{k_1\lambda_5}{1-\epsilon_2}\mu_3^{(2q)}>0.
\end{equation*}
\end{thm}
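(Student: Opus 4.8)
The plan is to apply the G-It\^o formula to the weighted process $e^{\lambda t}|X(t)|^2$ and then take the G-expectation, exactly mirroring the mechanism used in the global existence proof but carrying the exponential weight that will produce the decay rate. Working on the localized interval $[0,t\wedge\theta_m]$ with the stopping times $\theta_m$ of Theorem \ref{thm1}, the stochastic integral against $dB(s)$ is a G-martingale and vanishes under $\hat{\mathbb{E}}$ by Proposition property (1), while the integral against $d\langle B,B\rangle(s)$ is dominated by a constant $k_1$ through Lemma \ref{l4}. This yields
\begin{equation*}
\hat{\mathbb{E}}[e^{\lambda(t\wedge\theta_m)}|X(t\wedge\theta_m)|^2]\leq \hat{\mathbb{E}}|X(0)|^2+\hat{\mathbb{E}}\int_0^{t\wedge\theta_m}e^{\lambda s}\Big[\lambda|X(s)|^2+2X^\tau(s)g(X_s)+k_1\big(2X^\tau(s)h(X_s)+|\gamma(X_s)|^2\big)\Big]ds.
\end{equation*}
Since the solution furnished by the previous theorem is global, $\theta_m\rightarrow\infty$ quasi-surely, so it suffices to obtain the bound on each localized interval and pass to the limit $m\rightarrow\infty$.

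Next I would estimate the drift integrand using assumption $A_1$ evaluated at $\varphi\equiv 0$. From \eqref{c1} together with Young's inequality $2X^\tau(s)g(0)\leq\epsilon|X(s)|^2+\frac{1}{\epsilon}|g(0)|^2$ one gets $2X^\tau(s)g(X_s)\leq(-2\lambda_1+\epsilon)|X(s)|^2+\frac{1}{\epsilon}|g(0)|^2+2\lambda_2\int_{-\infty}^0|X(s+\alpha)|^2\mu_1(d\alpha)$, and \eqref{c2} treats $2X^\tau(s)h(X_s)$ in the same way with the slack parameter $\epsilon_1$, the term $|h(0)|^2$, the constant $\lambda_4$ and the measure $\mu_2$. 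For the diffusion term I would write $\gamma(X_s)=[\gamma(X_s)-\gamma(0)]+\gamma(0)$, apply Lemma \ref{l7} with $\epsilon_2$ and then \eqref{c3}, obtaining $|\gamma(X_s)|^2\leq\frac{1}{\epsilon_2}|\gamma(0)|^2+\frac{\lambda_5}{1-\epsilon_2}\int_{-\infty}^0|X(s+\alpha)|^2\mu_3(d\alpha)$. The three resulting delay integrals are then converted by the pathwise inequality \eqref{ffn} of Lemma \ref{Lf2} (with $p=2$, hence $pq=2q$), each contributing a $\|\zeta\|_q^2$ term divided by $2q-\lambda$ together with a multiple of $\mu_i^{(2q)}\int_0^t e^{\lambda s}|X(s)|^2ds$, after which monotonicity and sub-additivity of $\hat{\mathbb{E}}$ let everything be combined.

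Collecting terms, the coefficient of $\hat{\mathbb{E}}\int_0^{t}e^{\lambda s}|X(s)|^2ds$ becomes
\begin{equation*}
\lambda-2\lambda_1+\epsilon-2k_1\lambda_3+k_1\epsilon_1+2\lambda_2\mu_1^{(2q)}+2k_1\lambda_4\mu_2^{(2q)}+\frac{k_1\lambda_5}{1-\epsilon_2}\mu_3^{(2q)},
\end{equation*}
which is precisely the negative of the quantity assumed strictly positive in the hypothesis; since it is therefore negative and the integral is nonnegative, this entire term may be discarded. The surviving constants assemble, by construction, into $K_5$ (from $\hat{\mathbb{E}}|X(0)|^2$ together with the three $\|\zeta\|_q^2$ pieces) and, using $\int_0^t e^{\lambda s}ds\leq e^{\lambda t}/\lambda$, into $K_4 e^{\lambda t}$. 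Dividing through by $e^{\lambda t}$ delivers the claimed estimate \eqref{f3}.

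The only delicate point is the compatible choice of $\lambda$ and of the slack parameters. As $\epsilon,\epsilon_1,\epsilon_2\rightarrow 0^+$ the displayed coefficient tends to $\lambda-(2\lambda_1+2k_1\lambda_3-2\lambda_2\mu_1^{(2q)}-2k_1\lambda_4\mu_2^{(2q)}-k_1\lambda_5\mu_3^{(2q)})$, which is strictly negative exactly when $\lambda$ lies below the upper endpoint of the admissible interval; the standing hypothesis $2\lambda_1>2\lambda_2\mu_1^{(2q)}+2k_1\lambda_4\mu_2^{(2q)}+k_1\lambda_5\mu_3^{(2q)}-2k_1\lambda_3$ guarantees this endpoint is positive, while the constraint $\lambda<2q$ keeps $2q-\lambda>0$ so that Lemma \ref{Lf2} applies and $K_4,K_5$ are finite. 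Hence one first fixes such a $\lambda$ and then selects $\epsilon,\epsilon_1,\epsilon_2$ small enough to enforce the stated strict positivity. I expect the main obstacle to be not analytic but bookkeeping: one must track the five constants $\lambda_i$, the variation constant $k_1$ and the three slack parameters so that the assembled coefficient is exactly minus the hypothesized quantity and the leftover constants coincide with the prescribed $K_4$ and $K_5$.
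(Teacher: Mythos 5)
Your proposal is correct and follows essentially the same route as the paper's proof: apply the G-It\^o formula to $e^{\lambda t}|X(t)|^2$, estimate the drift and diffusion terms via assumption $A_1$ at $\varphi\equiv 0$ with slack parameters $\epsilon,\epsilon_1,\epsilon_2$ (Lemmas \ref{l7}--\ref{l8}), convert the delay integrals by \eqref{ffn} of Lemma \ref{Lf2}, discard the nonpositive integral term under the stated choice of $\lambda$, and divide by $e^{\lambda t}$. The only difference is your extra localization by the stopping times $\theta_m$ (the paper works directly with the global solution), which adds rigor but does not change the argument.
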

\begin{proof}
Applying the G-It\^o formula to $e^{t\lambda}|X(t)|^2$, taking the
G-expectation on both sides, using properties of G-It\^o integral
and lemma \ref{l4}, there exists a positive constant $k_1$ such that
\begin{equation}\begin{split}\label{8}
\hat{\mathbb{E}}[e^{\lambda t}|X(t)|^2]&\leq
\hat{\mathbb{E}}|X(0)|^2+\hat{\mathbb{E}}\int_0^t e^{\lambda
s}\Big[\lambda|X(s)|^2+2X^{\tau}(s)g(X_s)\Big]ds\\&+k_1\hat{\mathbb{E}}\int_0^t
e^{\lambda s}\Big[2X^{\tau}(s)h(X_s)+|\gamma(X_s)|^2\Big]ds.
\end{split}\end{equation}
By using condition \eqref{c1} and Lemma \ref{l8} we derive
\begin{equation*}\begin{split}
X^{\tau}(t)g(X_t)&\leq (\frac{\epsilon}{2}-\lambda_1)
|X(t)|^2+\frac{1}{2\epsilon}|g(0)|^2+\lambda_2\int_{-\infty}^0|X(t+\alpha)|^2\mu_1(d\alpha).
\end{split}\end{equation*}
Similar arguments yield
\begin{equation*}\begin{split}
X^{\tau}(t)h(X_t)& \leq (\frac{\epsilon_1}{2}-\lambda_3)
|X(t)|^2+\frac{1}{2\epsilon_1}|h(0)|^2+\lambda_4\int_{-\infty}^0|X(t+\alpha)|^2\mu_2(d\alpha).\\&
\end{split}\end{equation*}
In view of condition \eqref{c3} and lemma \ref{l7} we obtain
\begin{equation*}\begin{split}
|\gamma(X_t)|^2&\leq
\frac{1}{\epsilon_2}|\gamma(0)|^2+\frac{\lambda_5}{1-\epsilon_2}\int_{-\infty}^0|X(t+\alpha)|^2\mu_3(d\alpha).
\end{split}\end{equation*}
By substituting the above obtained inequalities, \eqref{8} takes the
following form
\begin{equation}\begin{split}\label{9}
\hat{\mathbb{E}}[e^{\lambda t}|X(t)|^2]& \leq
\hat{\mathbb{E}}|X(0)|^2+\frac{1}{\lambda}\Big(\frac{1}{\epsilon}|g(0)|^2+\frac{k_1}{\epsilon_1}|h(0)|^2+\frac{k_1}{\epsilon_2}|\gamma(0)|^2\Big)(e^{\lambda
t}-1)\\& +
(\epsilon+\lambda-2\lambda_1+k_1\epsilon_1-2k_1\lambda_3)\hat{\mathbb{E}}\int_0^t
e^{\lambda s}|X(s)|^2ds\\& + 2\lambda_2\hat{\mathbb{E}}\int_0^t
e^{\lambda s}\int_{-\infty}^0|X(s+\alpha)|^2\mu_1(d\alpha)ds
\\&
+2k_1\lambda_4\hat{\mathbb{E}}\int_0^t e^{\lambda s}
\int_{-\infty}^0|X(s+\alpha)|^2\mu_2(d\alpha)ds\\&
+k_1\frac{\lambda_5}{1-\epsilon_2}\hat{\mathbb{E}}\int_0^t
e^{\lambda s}\int_{-\infty}^0|X(s+\alpha)|^2\mu_3(d\alpha)ds\\&
\end{split}\end{equation}
By result \eqref{ffn} in lemma \ref{Lf2}, we have
\begin{equation}\label{f1} \int_0^t \int_{-\infty}^0 e^{\lambda
s}|X(s+\alpha)|^2\mu_i(d\alpha)ds\leq
\frac{1}{2q-\lambda}\|\zeta\|_q^2 \mu_i^{(2q)}+\mu_i^{(2q)} \int_0^t
e^{\lambda s}|X(s)|^2ds,\end{equation} which on substituting in
\eqref{9} for $i=1,2,3$ follows
\begin{equation*}\begin{split}
&\hat{\mathbb{E}}[e^{\lambda t}|X(t)|^2] \leq
\hat{\mathbb{E}}|X(0)|^2+
\frac{2\lambda_2\mu_1^{(2q)}}{2q-\lambda}\hat{\mathbb{E}}\|\zeta\|_q^2
+
\frac{2k_1\lambda_4\mu_2^{(2q)}}{2q-\lambda}\hat{\mathbb{E}}\|\zeta\|_q^2
+\frac{k_1\lambda_5\mu_3^{(2q)}}{(2q-\lambda)(1-\epsilon_2)}\hat{\mathbb{E}}\|\zeta\|_q^2
\\&
+\frac{1}{\lambda}\Big(\frac{1}{\epsilon}|g(0)|^2+\frac{k_1}{\epsilon_1}|h(0)|^2+\frac{k_1}{\epsilon_2}|\gamma(0)|^2\Big)(e^{\lambda
t}-1)\\& -(2\lambda_1-\epsilon-\lambda-k_1\epsilon_1+2k_1\lambda_3
-2\lambda_2\mu_1^{(2q)}-2k_1\lambda_4\mu_2^{(2q)}-\frac{k_1\lambda_5}{1-\epsilon_2}\mu_3^{(2q)})\hat{\mathbb{E}}\int_0^t
e^{\lambda s}|X(s)|^2ds.
\end{split}\end{equation*}
From the assumptions we observe that $2\lambda_1>
2\lambda_2\mu_1^{(2q)}+2k_1\lambda_4\mu_2^{(2q)}+k_1\lambda_5\mu_3^{(2q)}-2k_1\lambda_3$
and $\lambda\in (0,(2\lambda_1+2k_1\lambda_3
-2\lambda_2\mu_1^{(2q)}-2k_1\lambda_4\mu_2^{(2q)}-k_1\lambda_5\mu_3^{(2q)})\wedge
2q)$. Choosing $\epsilon$,$\epsilon_1$ and $\epsilon_2$ sufficiently
small such that
\begin{equation*}2\lambda_1-\epsilon-\lambda-k_1\epsilon_1+2k_1\lambda_3
-2\lambda_2\mu_1^{(2q)}-2k_1\lambda_4\mu_2^{(2q)}-\frac{k_1\lambda_5}{1-\epsilon_2}\mu_3^{(2q)}>0,
\end{equation*}
we have
\begin{equation*}
E[|X(t)|^2] \leq K_4+K_5 e^{-\lambda t},
\end{equation*}
where
\begin{equation*}K_4=\frac{1}{\lambda}\Big(\frac{1}{\epsilon}|g(0)|^2+\frac{k_1}{\epsilon_1}|h(0)|^2+\frac{k_1}{\epsilon_2}|\gamma(0)|^2\Big)\end{equation*}
and \begin{equation*}K_5=\hat{\mathbb{E}}|X(0)|^2+
\frac{2\lambda_2\mu_1^{(2q)}}{2q-\lambda}\hat{\mathbb{E}}\|\zeta\|_q^2
+
\frac{2k_1\lambda_4\mu_2^{(2q)}}{2q-\lambda}\hat{\mathbb{E}}\|\zeta\|_q^2
+\frac{k_1\lambda_5\mu_3^{(2q)}}{(2q-\lambda)(1-\epsilon_2)}\hat{\mathbb{E}}\|\zeta\|_q^2.\end{equation*}
The proof stands completed.
\end{proof}
\begin{rem} The above theorem \ref{thm2} shows that the solution of initial value problem \eqref{1} with
 given initial data $\zeta\in C_q((-\infty,0];\R^d)$ is mean square
bounded.
\end{rem}
\begin{thm}\label{thm33} Let all the assumptions of theorem \ref{thm2} hold. Let equation
\eqref{1} has two different solutions $X(t)$ and $Y(t)$
corresponding to distinct initial data $\zeta$ and $\xi$
respectively. Then
\begin{equation}\label{f4}
\hat{\mathbb{E}}[|X(t)-Y(t)|^2] \leq
K_6\hat{\mathbb{E}}\|\zeta-\xi\|_q^2e^{-\lambda t},
\end{equation}
where
$K_6=1+\frac{1}{2q-\lambda}(2\lambda_2\mu_1^{(2q)}+2k_1\lambda_4\mu_2^{(2q)}+k_1\lambda_5\mu_3^{(2q)})$.
\end{thm}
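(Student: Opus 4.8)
The plan is to follow the argument of Theorem \ref{thm2} almost verbatim, but applied to the difference process $Z(t)=X(t)-Y(t)$, whose initial segment is $Z_0=\zeta-\xi$. Subtracting the two integral forms of \eqref{1} shows that $Z$ is an It\^o process driven by the coefficient differences $g(X_s)-g(Y_s)$, $h(X_s)-h(Y_s)$ and $\gamma(X_s)-\gamma(Y_s)$. First I would apply the G-It\^o formula to $e^{\lambda t}|Z(t)|^2$, take the G-expectation, and use the properties of the G-It\^o integral together with lemma \ref{l4} to obtain, for the same constant $k_1$,
\begin{equation*}
\hat{\mathbb{E}}[e^{\lambda t}|Z(t)|^2]\leq \hat{\mathbb{E}}|Z(0)|^2+\hat{\mathbb{E}}\int_0^t e^{\lambda s}\big[\lambda|Z(s)|^2+2Z^{\tau}(s)(g(X_s)-g(Y_s))\big]ds+k_1\hat{\mathbb{E}}\int_0^t e^{\lambda s}\big[2Z^{\tau}(s)(h(X_s)-h(Y_s))+|\gamma(X_s)-\gamma(Y_s)|^2\big]ds.
\end{equation*}

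The decisive simplification relative to Theorem \ref{thm2} is that the dissipativity and growth bounds in assumption $A_1$ are stated for the \emph{differences} of the coefficients, so no constant terms $g(0),h(0),\gamma(0)$ intervene (and hence no $K_4$-type floor, nor the auxiliary constants $\epsilon,\epsilon_1,\epsilon_2$, appear). Concretely, taking $\psi=X_s$, $\varphi=Y_s$ so that $\psi(0)-\varphi(0)=Z(s)$ and $\psi(\alpha)-\varphi(\alpha)=Z(s+\alpha)$, conditions \eqref{c1}, \eqref{c2} and \eqref{c3} give directly
\begin{equation*}
Z^{\tau}(s)(g(X_s)-g(Y_s))\leq -\lambda_1|Z(s)|^2+\lambda_2\int_{-\infty}^0|Z(s+\alpha)|^2\mu_1(d\alpha),
\end{equation*}
and the analogous estimates for the $h$- and $\gamma$-terms. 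Substituting these collects the local term with coefficient $(\lambda-2\lambda_1-2k_1\lambda_3)$ and leaves three delayed double integrals weighted by $2\lambda_2$, $2k_1\lambda_4$ and $k_1\lambda_5$ against $\mu_1,\mu_2,\mu_3$.

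Next I would apply the estimate \eqref{ffn} of lemma \ref{Lf2} (with $p=2$ and $Z$ in place of $X$, so that $\|\zeta\|_q$ is replaced by $\|\zeta-\xi\|_q$) to each of the three double integrals. This produces a boundary contribution $\frac{1}{2q-\lambda}(2\lambda_2\mu_1^{(2q)}+2k_1\lambda_4\mu_2^{(2q)}+k_1\lambda_5\mu_3^{(2q)})\hat{\mathbb{E}}\|\zeta-\xi\|_q^2$ together with an extra multiple of $\int_0^t e^{\lambda s}|Z(s)|^2ds$. Combining with the local term, the full coefficient of $\int_0^t e^{\lambda s}|Z(s)|^2ds$ becomes $-(2\lambda_1+2k_1\lambda_3-\lambda-2\lambda_2\mu_1^{(2q)}-2k_1\lambda_4\mu_2^{(2q)}-k_1\lambda_5\mu_3^{(2q)})$, which is nonpositive precisely because $\lambda$ was chosen in $(0,(2\lambda_1+2k_1\lambda_3-2\lambda_2\mu_1^{(2q)}-2k_1\lambda_4\mu_2^{(2q)}-k_1\lambda_5\mu_3^{(2q)})\wedge 2q)$; this is the one place where the full hypotheses of Theorem \ref{thm2} are used, and the sign-bookkeeping here is the step I expect to demand the most care. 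Dropping that nonpositive integral leaves
\begin{equation*}
\hat{\mathbb{E}}[e^{\lambda t}|Z(t)|^2]\leq \hat{\mathbb{E}}|Z(0)|^2+\frac{1}{2q-\lambda}(2\lambda_2\mu_1^{(2q)}+2k_1\lambda_4\mu_2^{(2q)}+k_1\lambda_5\mu_3^{(2q)})\hat{\mathbb{E}}\|\zeta-\xi\|_q^2.
\end{equation*}

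Finally, since $|Z(0)|=|\zeta(0)-\xi(0)|\leq \|\zeta-\xi\|_q$ by the definition of the norm $\|\cdot\|_q$ (evaluating the supremum at $\alpha=0$), the term $\hat{\mathbb{E}}|Z(0)|^2$ is bounded by $\hat{\mathbb{E}}\|\zeta-\xi\|_q^2$, which supplies the leading $1$ in $K_6$. Collecting the two contributions gives $\hat{\mathbb{E}}[e^{\lambda t}|Z(t)|^2]\leq K_6\hat{\mathbb{E}}\|\zeta-\xi\|_q^2$ with $K_6=1+\frac{1}{2q-\lambda}(2\lambda_2\mu_1^{(2q)}+2k_1\lambda_4\mu_2^{(2q)}+k_1\lambda_5\mu_3^{(2q)})$, and multiplying through by $e^{-\lambda t}$ yields \eqref{f4}. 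The only genuinely new ingredient beyond Theorem \ref{thm2} is the observation that working with the difference removes all inhomogeneous terms, turning the mean-square bound into exponential convergence of the two solutions.
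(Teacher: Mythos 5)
Your proposal is correct and follows essentially the same route as the paper's own proof: the paper likewise applies the G-It\^o formula to $e^{\lambda t}|X(t)-Y(t)|^2$, invokes lemma \ref{l4} to produce $k_1$, applies the assumption $A_1$ bounds directly to the coefficient differences (so no $g(0),h(0),\gamma(0)$ terms arise), uses estimate \eqref{ffn} of lemma \ref{Lf2} on the three delayed integrals, and drops the resulting integral term thanks to the choice of $\lambda$, bounding $\hat{\mathbb{E}}|\zeta(0)-\xi(0)|^2$ by $\hat{\mathbb{E}}\|\zeta-\xi\|_q^2$ to obtain $K_6$. No gaps; your sign bookkeeping for the coefficient of $\int_0^t e^{\lambda s}|\Lambda(s)|^2 ds$ matches the paper's exactly.
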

\begin{proof} First we define $\Lambda(t)=X(t)-Y(t)$, $\hat{g}(t)=g(X_t)-g(Y_t)$, $\hat{h}(t)=h(X_t)-h(Y_t)$ and
$\hat{\gamma}(t)=\gamma(X_t)-\gamma(Y_t)$. Then applying the
G-It$\hat{o}$ formula to $e^{\lambda t}|\Lambda(t)|^2$, taking the
G-expectation on both sides, using properties of G-It\^o integral
and lemma \ref{l4}, there exists a positive constant $k_1$ such that
\begin{equation}\begin{split}\label{10}
e^{\lambda t}E|\Lambda(t)|^2&\leq
\hat{\mathbb{E}}|\zeta(0)-\xi(0)|^2+\hat{\mathbb{E}}\int_0^te^{\lambda
s}[\lambda|\Lambda(s)|^2+2\Lambda^{\tau}(s)\hat{g}(s)]ds\\&+
k_1\hat{\mathbb{E}}\int_0^te^{\lambda
s}[2\Lambda^{\tau}(s)\hat{h}(s)+|\hat{\gamma}(s)|^2]ds.
\end{split}\end{equation}
From assumption $A_{1}$, we have
\begin{equation*}\begin{split}
&\Lambda^{\tau}(t)\hat{g}(t)\leq-\lambda_1|\Lambda(t)|^2+\lambda_2\int_{-\infty}^0\Lambda(t+\alpha)\mu_1(d\alpha),\\&
\Lambda^{\tau}(t)\hat{h}(t)\leq-\lambda_3|\Lambda(t)|^2+\lambda_4\int_{-\infty}^0\Lambda(t+\alpha)\mu_2(d\alpha)\\&
\end{split}\end{equation*}
and
\begin{equation*}
|\hat{\gamma}(t)|^2\leq\lambda_5\int_{-\infty}^0\Lambda(t+\alpha)\mu_3(d\alpha).
\end{equation*}
In view of the above inequalities,  \eqref{10} takes the following
form
\begin{equation}\begin{split}\label{11}
e^{\lambda t}\hat{\mathbb{E}}|\Lambda(t)|^2& \leq
\hat{\mathbb{E}}|\zeta(0)-\xi(0)|^2
+(\lambda-2\lambda_1-2k_1\lambda_3)\hat{\mathbb{E}}\int_0^te^{\lambda
s}|\Lambda(s)|^2ds\\& +
2\lambda_2\hat{\mathbb{E}}\int_0^t\int_{-\infty}^0e^{\lambda
s}\Lambda(s+\alpha)\mu_1(d\alpha)ds +2k_1\lambda_4
\hat{\mathbb{E}}\int_0^t\int_{-\infty}^0e^{\lambda
s}\Lambda(s+\alpha)\mu_2(d\alpha)ds\\&+k_1\lambda_5\hat{\mathbb{E}}\int_0^t\int_{-\infty}^0e^{\lambda
s}\Lambda(s+\alpha)\mu_3(d\alpha)ds.
\end{split}\end{equation}
From the result \ref{ffn} of lemma \ref{Lf2} for $i=1,2,3$ we have
\begin{equation}\begin{split}\label{12} &\int_0^t \int_{-\infty}^0 e^{\lambda
s}|\Lambda(s+\alpha)|^2\mu_i(d\alpha)ds  \leq
\frac{1}{2q-\lambda}\|\zeta-\xi\|_q^2 \mu_i^{(2q)}+\mu_i^{(2q)}
\int_0^t e^{\lambda s}|\Lambda(s)|^2ds
\end{split}\end{equation}
By substituting \eqref{12} in \eqref{11}  we obtain
\begin{equation*}\begin{split}
e^{\lambda t}\hat{\mathbb{E}}|\Lambda(t)|^2&\leq
\hat{\mathbb{E}}|\zeta(0)-\xi(0)|^2 + \frac{1}{2q-\lambda}
[2\lambda_2\mu_1^{(2q)}+2k_1\lambda_4\mu_2^{(2q)}+k_1\lambda_5\mu_3^{(2q)}]\hat{\mathbb{E}}\|\zeta-\xi\|_q^2\\&
-(2\lambda_1+2k_1\lambda_3-\lambda-2\lambda_2\mu_1^{(2q)}-2k_1\lambda_4
\mu_2^{(2q)}-k_1\lambda_5\mu_3^{(2q)})\hat{\mathbb{E}}\int_0^te^{\lambda
s}|\Lambda(s)|^2ds.
\end{split}\end{equation*}
In view of the conditions $2\lambda_1>
2\lambda_2\mu_1^{(2q)}+2k_1\lambda_4\mu_2^{(2q)}+k_1\lambda_5\mu_3^{(2q)}-2k_1\lambda_3$
and
 $\lambda\in (0,(2\lambda_1+2k_1\lambda_3
-2\lambda_2\mu_1^{(2q)}-2k_1\lambda_4\mu_2^{(2q)}-k_1\lambda_5\mu_3^{(2q)})\wedge
2q)$ it follows
\begin{equation*}\begin{split}
\hat{\mathbb{E}}|\Lambda(t)|^2& \leq
[1+\frac{1}{2q-\lambda}(2\lambda_2\mu_1^{(2q)}+2k_1\lambda_4\mu_2^{(2q)}+k_1\lambda_5\mu_3^{(2q)})]\hat{\mathbb{E}}\|\zeta-\xi\|_q^2
e^{- \lambda t},
\end{split}\end{equation*}
consequently,
\begin{equation*}\begin{split}
\hat{\mathbb{E}}|X(t)-Y(t)|^2& \leq K_6
\hat{\mathbb{E}}\|\zeta-\xi\|_q^2e^{- \lambda t},
\end{split}\end{equation*}
where
$K_6=1+\frac{1}{2q-\lambda}(2\lambda_2\mu_1^{(2q)}+2k_1\lambda_4\mu_2^{(2q)}+k_1\lambda_5\mu_3^{(2q)})$.
The proof is complete.
\end{proof}
\begin{rem} The above theorem \ref{thm33} describes that two distinct solutions of the initial value problem \eqref{1} with two distinct initial
data are convergent.
\end{rem}
From theorem \ref{thm33}, we get the following stability result.
\begin{cor} Let all conditions of theorem \ref{thm33} hold. If
$g(0)=h(0)=\gamma(0)=0$, then the trivial solution of problem
\eqref{1} is mean square exponentially stable.
\end{cor}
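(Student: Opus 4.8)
The plan is to recognize that the homogeneity hypothesis $g(0)=h(0)=\gamma(0)=0$ forces the identically-zero process to be a genuine solution of \eqref{1}, and then to obtain the stability estimate as a direct specialization of the convergence estimate \eqref{f4} already established in Theorem \ref{thm33}.

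First I would verify that $Y(t)\equiv 0$ on $(-\infty,\infty)$ is the solution of \eqref{1} corresponding to the zero initial datum $\xi\equiv 0\in C_q((-\infty,0];\R^d)$. For each $t\geq 0$ the segment $Y_t$ is the zero element of $C_q$, so by the standing hypothesis $g(Y_t)=g(0)=0$, $h(Y_t)=h(0)=0$ and $\gamma(Y_t)=\gamma(0)=0$. Substituting into the integral form of a solution gives
\[
Y(t)=\xi(0)+\int_0^t g(Y_s)\,ds+\int_0^t h(Y_s)\,d\langle B,B\rangle(s)+\int_0^t \gamma(Y_s)\,dB(s)=0
\]
quasi-surely, and by the uniqueness part of Theorem \ref{thm1} this is the unique solution with initial data $\xi\equiv 0$. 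Note that $\xi\equiv 0$ is an admissible datum, since the zero function lies in $C_q$ with $\|\xi\|_q=0$.

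Next I would apply Theorem \ref{thm33} to the pair $X(t)$ and $Y(t)\equiv 0$, whose initial data are $\zeta$ and $\xi\equiv 0$. The estimate \eqref{f4} then specializes to
\[
\hat{\mathbb{E}}[|X(t)|^2]=\hat{\mathbb{E}}[|X(t)-Y(t)|^2]\leq K_6\,\hat{\mathbb{E}}\|\zeta-\xi\|_q^2\,e^{-\lambda t}=K_6\,\hat{\mathbb{E}}\|\zeta\|_q^2\,e^{-\lambda t},\qquad t\geq 0,
\]
where $K_6=1+\frac{1}{2q-\lambda}(2\lambda_2\mu_1^{(2q)}+2k_1\lambda_4\mu_2^{(2q)}+k_1\lambda_5\mu_3^{(2q)})>0$ and $\lambda\in(0,2q)$ is the same positive rate furnished by Theorem \ref{thm33}.

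Finally I would observe that this bound is precisely the definition of mean square exponential stability of the trivial solution: the second moment $\hat{\mathbb{E}}[|X(t)|^2]$ decays to zero exponentially with rate $\lambda>0$, controlled by the size $\hat{\mathbb{E}}\|\zeta\|_q^2$ of the initial perturbation through the constant $K_6$. There is essentially no obstacle in this argument; the single point that needs a line of justification is the existence of the trivial solution, which is immediate from the homogeneity of $g,h,\gamma$ at the origin, after which the conclusion is a direct reading of \eqref{f4}.
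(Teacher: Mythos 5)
Your proof is correct and follows exactly the route the paper intends: the paper gives no separate argument for this corollary, stating only that it follows from Theorem \ref{thm33}, and your verification that the zero process is the solution with datum $\xi\equiv 0$ (thanks to $g(0)=h(0)=\gamma(0)=0$) followed by specializing estimate \eqref{f4} is precisely that deduction, spelled out.
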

\section{Convergence and boundedness of the solution map}
 This section
examines the asymptotic properties of the solution map $X_t$. First
we find that the solution map $X_t$ is mean square bounded. Then we
show the convergence of distinct solution maps having distinct
initial data. Let problem \eqref{1} with the given initial data
$\zeta\in C_q((-\infty,0];\R^d)$ has a unique solution $X(t)$.
\begin{thm}\label{thm3} Let assumption  $A_{1}$ holds.
Assume that $\lambda_i$, $i=1,2,..,5$ satisfy
$2\lambda_1>2\lambda_2\mu_1^{(2q)}+1+2k_1\lambda_4\mu_2^{(2q)}-2k_1\lambda_3+k_1
+2(k_1\mu_2^{(2q)}+2k_3\mu_3^{(2q)})\lambda_5$ and
$\lambda\in\Big(0,(2\lambda_1-2\lambda_2\mu_1^{(2q)}-1-2k_1\lambda_4\mu_2^{(2q)}+2k_1\lambda_3-k_1
-2(k_1\mu_2^{(2q)}+2k_3\mu_3^{(2q)})\lambda_5)\wedge 2q\Big)$. For
any initial data $\zeta\in C_q((-\infty,0];\R^d)$, we then have
\begin{equation*}
\hat{\mathbb{E}}\|X_t\|_q^2 \leq K_7+K_8e^{-\lambda t},
\end{equation*}
where
$K_7=\frac{2}{\lambda}\Big(|g(0)|^2+k_1|h(0)|^2+2(k_1+2k_3)|\gamma(0)|^2\Big)$
and $K_8=3+\frac{4}{2q-\lambda}[ \lambda_2\mu_1^{(2q)}+
k_1(\lambda_4+\lambda_5)\mu_2^{(2q)}+2k_3\lambda_5\mu_3^{(2q)}]$.
\end{thm}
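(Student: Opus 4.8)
The plan is to reduce the claim to an estimate for the weighted running supremum $\hat{\mathbb{E}}\big[\sup_{0\le s\le t}e^{\lambda s}|X(s)|^2\big]$ and then pass to the phase-space norm through the argument already contained in Lemma \ref{Lf3}. Carrying out the computation in the proof of Lemma \ref{Lf3} with $p=2$ only up to its penultimate line, before the factor $e^{\lambda(s-t)}\le1$ is discarded, gives the sharper pathwise inequality
\begin{equation*}
\|X_t\|_q^2\le e^{-\lambda t}\|\zeta\|_q^2+e^{-\lambda t}\sup_{0<s\le t}e^{\lambda s}|X(s)|^2 .
\end{equation*}
Taking $\hat{\mathbb{E}}$, it then suffices to establish a bound of the form $\hat{\mathbb{E}}\big[\sup_{0\le s\le t}e^{\lambda s}|X(s)|^2\big]\le C_1e^{\lambda t}+C_2$: multiplying by $e^{-\lambda t}$ converts the $C_1e^{\lambda t}$ part into the steady-state constant $K_7$ and attaches the remaining mass, together with $\hat{\mathbb{E}}\|\zeta\|_q^2$, to the decaying factor $e^{-\lambda t}$, which is precisely the shape $K_7+K_8e^{-\lambda t}$.

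To get that bound I would apply the G-It\^o formula to $e^{\lambda r}|X(r)|^2$, take the supremum over $r\in[0,t]$, and then the G-expectation. The drift and the $d\langle B,B\rangle$ contributions are treated exactly as in Theorem \ref{thm2}: conditions \eqref{c1}, \eqref{c2}, \eqref{c3} together with Lemmas \ref{l7} and \ref{l8} bound $2X^\tau(s)g(X_s)$, $2X^\tau(s)h(X_s)$ and $|\gamma(X_s)|^2$ by a dissipative $|X(s)|^2$ term, the constants $|g(0)|^2,|h(0)|^2,|\gamma(0)|^2$, and the delay integrals $\int_{-\infty}^0|X(s+\alpha)|^2\mu_i(d\alpha)$. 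The genuinely new ingredient is the supremum of the stochastic integral $\int_0^r e^{\lambda s}2X^\tau(s)\gamma(X_s)\,dB(s)$. Here I would invoke Lemma \ref{l3} (the G-Burkholder--Davis--Gundy inequality, the source of the constant $k_3$) to bound its expected running supremum by a multiple of $\hat{\mathbb{E}}\big(\int_0^t e^{2\lambda s}|X(s)|^2|\gamma(X_s)|^2\,ds\big)^{1/2}$, factor the integrand as $(e^{\lambda s}|X(s)|^2)(e^{\lambda s}|\gamma(X_s)|^2)$, pull out $\big(\sup_{0\le s\le t}e^{\lambda s}|X(s)|^2\big)^{1/2}$, and apply Young's inequality. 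This splits the martingale term into $\tfrac12\hat{\mathbb{E}}\big[\sup_{0\le r\le t}e^{\lambda r}|X(r)|^2\big]$, which is absorbed into the left-hand side (doubling the surviving constants, hence the factor $2$ in $K_7$ and $K_8$), plus a multiple of $\hat{\mathbb{E}}\int_0^t e^{\lambda s}|\gamma(X_s)|^2\,ds$, again controlled by \eqref{c3}.

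After absorption, each delay integral $\int_0^t e^{\lambda s}\int_{-\infty}^0|X(s+\alpha)|^2\mu_i(d\alpha)\,ds$ is replaced, via \eqref{ffn} of Lemma \ref{Lf2}, by $\frac{\mu_i^{(2q)}}{2q-\lambda}\|\zeta\|_q^2$ plus $\mu_i^{(2q)}\int_0^t e^{\lambda s}|X(s)|^2\,ds$, the $\|\zeta\|_q^2$ pieces accumulating into the $\frac{4}{2q-\lambda}[\cdots]$ part of $K_8$. Collecting the coefficient of $\int_0^t e^{\lambda s}|X(s)|^2\,ds$ and taking $\epsilon,\epsilon_1,\epsilon_2$ small, the hypothesis $2\lambda_1>2\lambda_2\mu_1^{(2q)}+1+2k_1\lambda_4\mu_2^{(2q)}-2k_1\lambda_3+k_1+2(k_1\mu_2^{(2q)}+2k_3\mu_3^{(2q)})\lambda_5$ is exactly what forces this coefficient to be nonpositive, so the integral is dropped; the extra $|\gamma|^2$ produced by the G-BDG step is precisely why $\gamma$ now enters through both $k_1$ and $k_3$ (and why $K_7$ carries $2(k_1+2k_3)|\gamma(0)|^2$). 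The remaining constant-forcing terms integrate to $\frac{2}{\lambda}(e^{\lambda t}-1)\big(|g(0)|^2+k_1|h(0)|^2+2(k_1+2k_3)|\gamma(0)|^2\big)$, yielding $C_1e^{\lambda t}$, while $2\hat{\mathbb{E}}|X(0)|^2$ and the $\|\zeta\|_q^2$ terms give $C_2$; substituting into the first display completes the argument.

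I expect the main obstacle to be twofold. First, the supremum control of the G-It\^o integral with the absorption bookkeeping: one must arrange the Young splitting so that the absorbed fraction is strictly below $1$, verify that Lemma \ref{l3} applies to the integrand $e^{\lambda s}2X^\tau(s)\gamma(X_s)$ (progressively measurable and in $M_G^2$ up to the localizing stopping times), and confirm the strengthened dissipativity condition still leaves the coefficient of $\int_0^t e^{\lambda s}|X(s)|^2\,ds$ nonpositive after the constants are doubled. Second, and more delicate, is handling the finite-variation supremum so that the dissipative $|X(s)|^2$ terms are retained rather than lost to a crude positive-part estimate; the clean way is to transfer the dissipative contribution to the left-hand side before taking the supremum, which is what ultimately lets the hypothesis kill the residual $\int_0^t e^{\lambda s}|X(s)|^2\,ds$.
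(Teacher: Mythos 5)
Your proposal is correct and takes essentially the same route as the paper's own proof: G-It\^o formula applied to $e^{\lambda s}|X(s)|^2$, the drift and $d\langle B,B\rangle$ terms bounded via assumption $A_1$ and the delay integrals via \eqref{ffn} of Lemma \ref{Lf2}, the stochastic integral's running supremum controlled by Lemma \ref{l3} followed by the Young splitting that absorbs $\tfrac{1}{2}\hat{\mathbb{E}}\big[\sup_{0<s\leq t}e^{\lambda s}|X(s)|^2\big]$ into the left side (producing the factor $2$ and the constant $k_3=k_2^2$), the dissipativity hypothesis annihilating the residual $\int_0^t e^{\lambda s}|X(s)|^2ds$ term, and the final passage to $\hat{\mathbb{E}}\|X_t\|_q^2$ through exactly the sharpened, penultimate-line form of Lemma \ref{Lf3} that you identified. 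The only cosmetic deviation is your appeal to Lemmas \ref{l7} and \ref{l8} with small $\epsilon$'s for the drift bounds, whereas the paper reuses the $\epsilon$-free inequalities \eqref{4}--\eqref{6} (from $2ab\leq a^2+b^2$), which is immaterial to the argument.
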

\begin{proof}
Applying the G-It$\hat{o}$ formula to $e^{\lambda t}|X(t)|^2$ and
taking the G-expectation on both sides, we have
\begin{equation}\begin{split}\label{13}
\hat{\mathbb{E}}\Big[\sup_{0<s\leq t}e^{\lambda s}|X(s)|^2\Big]&\leq
\hat{\mathbb{E}}|\zeta(0)|^2+\hat{\mathbb{E}}\Big[\sup_{0<s\leq
t}\int_0^t e^{\lambda
s}\Big(\lambda|X(s)|^2+2X^{\tau}(s)g(X_s)\Big)ds\Big]\\&+\hat{\mathbb{E}}\Big[\sup_{0<s\leq
t}\int_0^t e^{\lambda
s}\Big(2X^{\tau}(s)h(X_s)+|\gamma(X_s)|^2\Big)d \langle B, B \rangle
(s)\Big]\\& +2\hat{\mathbb{E}}\Big[\sup_{0<s\leq t}\int_0^t
e^{\lambda s}X^{\tau}(s)\gamma(X_s)dB(s)\Big].
\end{split}\end{equation}
By straightforward calculations, using \eqref{4} and then \eqref{f1}
we obtain
\begin{equation}\begin{split}\label{14}&\hat{\mathbb{E}}\Big[\sup_{0<s\leq t}\int_0^t e^{\lambda
s}\Big(\lambda|X(s)|^2+2X^{\tau}(s)g(X_s)\Big)ds\Big]\\&\leq\frac{1}{\lambda}|g(0)|^2(e^{\lambda
t}-1)+\frac{2\lambda_2}{2q-\lambda}\hat{\mathbb{E}}\|\zeta\|_q^2
\mu_1^{(2q)}+(\lambda-2\lambda_1+2\lambda_2\mu_1^{(2q)}+1)\hat{\mathbb{E}}\int_0^t
e^{\lambda s}|X(s)|^2ds.
\end{split}\end{equation}
By using \eqref{5}, \eqref{6}, \eqref{f1} and lemma \ref{l4}, there
exists a positive constant $k_1$ such that
\begin{equation*}\begin{split}
&\hat{\mathbb{E}}\Big[\sup_{0<s\leq t}\int_0^t e^{\lambda
s}\Big(2X^{\tau}(s)h(X_s)+|\gamma(X_s)|^2\Big)d \langle B, B \rangle
(s)\Big]\\& \leq
\frac{1}{\lambda}k_1(|h(0)|^2+2|\gamma(0)|^2)(e^{\lambda t}-1)
-k_1(2\lambda_3-1)\hat{\mathbb{E}}\int_0^t e^{\lambda
s}|X(s)|^2ds\\& +2k_1\lambda_4\hat{\mathbb{E}}\int_0^t
\int_{-\infty}^0e^{\lambda s}|X(s+\alpha)|^2\mu_2(d\alpha)ds\\&
+2k_1\lambda_5\hat{\mathbb{E}}\int_0^t \int_{-\infty}^0e^{\lambda
s}|X(s+\alpha)|^2\mu_3(d\alpha)ds\\& \leq
\frac{1}{\lambda}k_1(|h(0)|^2+2|\gamma(0)|^2)(e^{\lambda t}-1)
-k_1(2\lambda_3-1)\hat{\mathbb{E}}\int_0^t e^{\lambda
s}|X(s)|^2ds\\&
+\frac{2k_1\lambda_4}{2q-\lambda}\hat{\mathbb{E}}\|\zeta\|_q^2
\mu_2^{(2q)}+2k_1\lambda_4\mu_2^{(2q)} \hat{\mathbb{E}}\int_0^t
e^{\lambda s}|X(s)|^2ds\\&
+\frac{2k_1\lambda_5}{2q-\lambda}\hat{\mathbb{E}}\|\zeta\|_q^2
\mu_2^{(2q)}+2k_1\lambda_5\mu_2^{(2q)} \hat{\mathbb{E}}\int_0^t
e^{\lambda s}|X(s)|^2ds,
\end{split}\end{equation*}
simplification yields
\begin{equation}\begin{split}\label{15}
&\hat{\mathbb{E}}\Big[\sup_{0<s\leq t}\int_0^t e^{\lambda
s}\Big(2X^{\tau}(s)h(X_s)+|\gamma(X_s)|^2\Big)d \langle B, B \rangle
(s)\Big]\\& \leq
\frac{1}{\lambda}k_1(|h(0)|^2+2|\gamma(0)|^2)(e^{\lambda t}-1)
+\frac{2}{2q-\lambda}(k_1\lambda_4\mu_2^{(2q)}+k_1\lambda_5\mu_2^{(2q)})\hat{\mathbb{E}}\|\zeta\|_q^2\\&
+(2k_1\lambda_4\mu_2^{(2q)}+2k_1\lambda_5\mu_2^{(2q)}-2k_1\lambda_3+k_1)\hat{\mathbb{E}}\int_0^t
e^{\lambda s}|X(s)|^2ds.
\end{split}\end{equation}
By utilizing \eqref{6}, the inequality
$a_1a_2\leq\frac{1}{2}\sum_{i=1}^2a_i$ and lemma \ref{l3}, there
exists a positive constant $k_2$ such that
\begin{equation*}\begin{split}
2\hat{\mathbb{E}}\Big[\sup_{0<s\leq t}\int_0^t e^{\lambda
s}X^{\tau}(s)\gamma(X_s)dB(t)\Big]& \leq
2k_2\hat{\mathbb{E}}\Big[\int_0^t e^{\lambda s}|X(s)|^2e^{\lambda
s}|\gamma(X_s)|^2ds\Big]^\frac{1}{2}\\& \leq
\frac{1}{2}E\Big[\sup_{0<s\leq t}e^{\lambda
s}|X(s)|^2\Big]+2k^2_2E\int_0^t e^{\lambda s}|\gamma(X_s)|^2ds\\&
\leq\frac{1}{2}\hat{\mathbb{E}}\Big[\sup_{0<s\leq t}e^{\lambda
s}|X(s)|^2\Big] +4k^2_2\frac{1}{\lambda}|\gamma(0)|^2(e^{\lambda
t}-1)\\& +4k^2_2\lambda_5\hat{\mathbb{E}}\int_0^t
\int_{-\infty}^0e^{\lambda s}|X(s+\alpha)|^2\mu_3(d\alpha),
\end{split}\end{equation*}
by using lemma \ref{Lf2}, we get
\begin{equation}\begin{split}\label{16}
2\hat{\mathbb{E}}\Big[\sup_{0<s\leq t}\int_0^t e^{\lambda
s}X^{\tau}(s)\gamma(X_s)dB(t)\Big]& \leq
\frac{1}{2}\hat{\mathbb{E}}\Big[\sup_{0<s\leq t}e^{\lambda
s}|X(s)|^2\Big] +4k_3\frac{1}{\lambda}|\gamma(0)|^2(e^{\lambda
t}-1)\\&
+\frac{4k_3\lambda_5}{2q-\lambda}\mu_3^{(2q)}\hat{\mathbb{E}}\|\zeta\|_q^2
+4k_3\lambda_5\mu_3^{(2q)} \hat{\mathbb{E}}\int_0^t e^{\lambda
s}|X(s)|^2ds,
\end{split}\end{equation}
where $k_3=k^2_2$. Noticing that $\zeta(0)\leq
\sup_{-\infty<\alpha\leq 0}e^{q\alpha}|\zeta(\alpha)|=\|\zeta\|_q$
and substituting \eqref{14}, \eqref{15} and \eqref{16} in
\eqref{13}, it follows
\begin{equation*}\begin{split}
\hat{\mathbb{E}}\Big[\sup_{0<s\leq t}e^{\lambda s}|X(s)|^2\Big]&
\leq
\frac{2}{\lambda}\Big(|g(0)|^2+k_1|h(0)|^2+2(k_1+2k_3)|\gamma(0)|^2\Big)(e^{\lambda
t}-1)\\& +\frac{2}{2q-\lambda}\Big(
2q-\lambda+2\lambda_2\mu_1^{(2q)}+
2k_1(\lambda_4+\lambda_5)\mu_2^{(2q)}+4k_3\lambda_5\mu_3^{(2q)}\Big)\hat{\mathbb{E}}\|\zeta\|_q^2\\&
-2\Big(2\lambda_1-2\lambda_2\mu_1^{(2q)}-1-\lambda\\&-2k_1\lambda_4\mu_2^{(2q)}+2k_1\lambda_3-k_1
-2(k_1\mu_2^{(2q)}+2k_3\mu_3^{(2q)})\lambda_5\Big)\hat{\mathbb{E}}\int_0^t
e^{\lambda s}|X(s)|^2ds.
\end{split}\end{equation*}
By using the assumptions
$2\lambda_1>2\lambda_2\mu_1^{(2q)}+1+2k_1\lambda_4\mu_2^{(2q)}-2k_1\lambda_3+k_1
+2(k_1\mu_2^{(2q)}+2k_3\mu_3^{(2q)})\lambda_5$ and
$\lambda\in\Big(0,(2\lambda_1-2\lambda_2\mu_1^{(2q)}-1-2k_1\lambda_4\mu_2^{(2q)}+2k_1\lambda_3-k_1
-2(k_1\mu_2^{(2q)}+2k_3\mu_3^{(2q)})\lambda_5)\wedge 2q\Big)$, we
get
\begin{equation}\begin{split}\label{17}
\hat{\mathbb{E}}\Big[\sup_{0<s\leq t}e^{\lambda s}|X(s)|^2\Big]&
\leq
\frac{2}{\lambda}\Big(|g(0)|^2+k_1|h(0)|^2+2(k_1+2k_3)|\gamma(0)|^2\Big)(e^{\lambda
t}-1)\\& +\frac{2}{2q-\lambda}\Big(
2q-\lambda+2\lambda_2\mu_1^{(2q)}+
2k_1(\lambda_4+\lambda_5)\mu_2^{(2q)}+4k_3\lambda_5\mu_3^{(2q)}\Big)\hat{\mathbb{E}}\|\zeta\|_q^2.
\end{split}\end{equation}
From lemma \eqref{Lf3},  we have
\begin{equation*}\begin{split}\label{}
\hat{\mathbb{E}}\|X_t\|^2_q& \leq e^{-\lambda t}
\hat{\mathbb{E}}\|\zeta\|_q^2+e^{-\lambda
t}\hat{\mathbb{E}}\Big[\sup_{0<s\leq t}e^{\lambda s}|X(s)|^2\Big].
\end{split}\end{equation*}
Substituting \eqref{17} in the above inequality yields
\begin{equation*}\begin{split}\label{}
\hat{\mathbb{E}}\|X_t\|& \leq e^{-\lambda t}
\hat{\mathbb{E}}\|\zeta\|_q^2+\frac{2}{\lambda}\Big(|g(0)|^2+k_1|h(0)|^2+2(k_1+2k_3)|\gamma(0)|^2\Big)(1-e^{-\lambda
t})\\& +\frac{2}{2q-\lambda}\Big( 2q-\lambda+2\lambda_2\mu_1^{(2q)}+
2k_1(\lambda_4+\lambda_5)\mu_2^{(2q)}+4k_3\lambda_5\mu_3^{(2q)}\Big)\hat{\mathbb{E}}\|\zeta\|_q^2]e^{-\lambda
t}\\& \leq K_7+K_8e^{-\lambda t}
\end{split}\end{equation*}
where
$K_7=\frac{2}{\lambda}\Big(|g(0)|^2+k_1|h(0)|^2+2(k_1+2k_3)|\gamma(0)|^2\Big)$
and $K_8=3+\frac{4}{2q-\lambda}\Big( \lambda_2\mu_1^{(2q)}+
k_1(\lambda_4+\lambda_5)\mu_2^{(2q)}+2k_3\lambda_5\mu_3^{(2q)}\Big)$.
The proof is complete.
\end{proof}
\begin{rem} Theorem \ref{thm3} states that the solution map $X_t$ of the initial value problem \ref{1} with given initial data $\zeta\in C_q((-\infty,0];\R^d)$ is mean square
bounded.
\end{rem}
\begin{thm}\label{thm4} Let assumptions $A_{1}$ holds. Assume that
$\lambda_i$, $i=1,2,..,5$ satisfy $2\lambda_1>2\lambda_2\mu_1^{(2q)}
-2k_1\lambda_3+2k_1\lambda_4\mu_2^{(2q)}+(k_1+2k_3)\mu_3^{(2q)}\lambda_5$
and $\lambda\in(0,(2\lambda_1-2\lambda_2\mu_1^{(2q)}
+2k_1\lambda_3-2k_1\lambda_4\mu_2^{(2q)}-(k_1+2k_3)\mu_3^{(2q)}\lambda_5)\wedge
2q)$. Then for distinct initial data $\zeta,\xi\in C_q$ the
respective solution maps $X_t^{\zeta}$ and $Y_t^{\xi}$ satisfy
\begin{equation*} \hat{\mathbb{E}}\|X_t^{\zeta}-Y_t^{\xi}\|_q^2\leq
K_9\hat{\mathbb{E}}\|\zeta-\xi\|_q^2e^{-\hat{\lambda} t},
\end{equation*}
where $K_9=1+\frac{4}{2q-\lambda}[\lambda_2\mu_1^{(2q)}
+k_1(2\lambda_4\mu_2^{(2q)}+\lambda_5\mu_3^{(2q)})+k_3\lambda_5\mu_3^{(2q)}]$.
\end{thm}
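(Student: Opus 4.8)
The plan is to mirror the combined strategy of Theorems \ref{thm33} and \ref{thm3}: set up the difference process, apply the G-It\^o formula together with the supremum-type estimates, exploit the monotonicity-type bounds of assumption $A_{1}$, and finally convert the pathwise supremum into a norm estimate via Lemma \ref{Lf3}.

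First I would introduce $\Lambda(t)=X_t^\zeta-Y_t^\xi$ together with $\hat{g}(t)=g(X_t^\zeta)-g(Y_t^\xi)$, $\hat{h}(t)=h(X_t^\zeta)-h(Y_t^\xi)$ and $\hat{\gamma}(t)=\gamma(X_t^\zeta)-\gamma(Y_t^\xi)$. Applying the G-It\^o formula to $e^{\lambda t}|\Lambda(t)|^2$ and taking the G-expectation of the supremum over $0<s\leq t$, I obtain, exactly as in \eqref{13}, three groups of terms: a Lebesgue integral carrying $\lambda|\Lambda|^2+2\Lambda^\tau\hat{g}$, a $d\langle B,B\rangle$ integral carrying $2\Lambda^\tau\hat{h}+|\hat{\gamma}|^2$ that is controlled by Lemma \ref{l4} with its constant $k_1$, and the genuine stochastic integral $2\int_0^s e^{\lambda r}\Lambda^\tau(r)\hat{\gamma}(r)dB(r)$. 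On each group I apply the monotonicity bounds \eqref{c1}, \eqref{c2} and the growth bound \eqref{c3} of $A_{1}$ to the differences, which yields $\Lambda^\tau\hat{g}\leq-\lambda_1|\Lambda|^2+\lambda_2\int_{-\infty}^0|\Lambda(t+\alpha)|^2\mu_1(d\alpha)$ and the analogous inequalities for $\Lambda^\tau\hat{h}$ and $|\hat{\gamma}|^2$; crucially, no residual $g(0),h(0),\gamma(0)$ constants appear, since these cancel in the difference.

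The main obstacle, as in Theorem \ref{thm3}, is the stochastic integral term. I would estimate $2\hat{\mathbb{E}}[\sup_{0<s\leq t}\int_0^s e^{\lambda r}\Lambda^\tau\hat{\gamma}\,dB]$ using Lemma \ref{l3} (the G-version of the Burkholder--Davis--Gundy inequality), producing a constant $k_2$ and a bound of the form $2k_2\hat{\mathbb{E}}(\int_0^t e^{\lambda s}|\Lambda(s)|^2 e^{\lambda s}|\hat{\gamma}(s)|^2 ds)^{1/2}$. With the elementary inequality $a_1a_2\leq\tfrac12(a_1^2+a_2^2)$ I split this into $\tfrac12\hat{\mathbb{E}}[\sup_{0<s\leq t}e^{\lambda s}|\Lambda(s)|^2]$, which is absorbed into the left-hand side, plus $2k_3\hat{\mathbb{E}}\int_0^t e^{\lambda s}|\hat{\gamma}(s)|^2 ds$ with $k_3=k_2^2$; the latter is then bounded via \eqref{c3}.

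To treat the three delay integrals $\int_0^t\int_{-\infty}^0 e^{\lambda s}|\Lambda(s+\alpha)|^2\mu_i(d\alpha)ds$ that arise, I invoke the estimate \eqref{ffn} of Lemma \ref{Lf2} applied to $\Lambda$ (legitimate because $\Lambda_0=\zeta-\xi\in C_q$), producing the boundary term $\tfrac{1}{2q-\lambda}\|\zeta-\xi\|_q^2\mu_i^{(2q)}$ plus $\mu_i^{(2q)}\int_0^t e^{\lambda s}|\Lambda(s)|^2 ds$. Collecting everything, the coefficient multiplying $\int_0^t e^{\lambda s}|\Lambda(s)|^2 ds$ is $-(2\lambda_1-2\lambda_2\mu_1^{(2q)}+2k_1\lambda_3-2k_1\lambda_4\mu_2^{(2q)}-(k_1+2k_3)\mu_3^{(2q)}\lambda_5-\lambda)$, which is nonpositive by the hypotheses on the $\lambda_i$ and the admissible range of $\lambda$, so this integral may be discarded. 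After absorbing the $\tfrac12\sup$ term, I reach $\hat{\mathbb{E}}[\sup_{0<s\leq t}e^{\lambda s}|\Lambda(s)|^2]\leq K_9\hat{\mathbb{E}}\|\zeta-\xi\|_q^2$ with the stated constant. Finally, applying Lemma \ref{Lf3} with $p=2$ to $\Lambda$ gives $\hat{\mathbb{E}}\|X_t^\zeta-Y_t^\xi\|_q^2=\hat{\mathbb{E}}\|\Lambda_t\|_q^2\leq e^{-\lambda t}\hat{\mathbb{E}}\|\zeta-\xi\|_q^2+e^{-\lambda t}\hat{\mathbb{E}}[\sup_{0<s\leq t}e^{\lambda s}|\Lambda(s)|^2]$, and substituting the supremum bound yields the claimed $K_9\hat{\mathbb{E}}\|\zeta-\xi\|_q^2 e^{-\lambda t}$ decay (here $\hat{\lambda}=\lambda$).
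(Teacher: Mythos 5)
Your proposal is correct and follows essentially the same route as the paper: the difference process $\Lambda$, the G-It\^o formula applied to $e^{\lambda t}|\Lambda(t)|^2$ with a supremum, Lemmas \ref{l3}/\ref{l4} for the stochastic and quadratic-variation integrals with the $\tfrac12$-absorption trick and $k_3=k_2^2$, Lemma \ref{Lf2} (estimate \eqref{ffn}) for the delay integrals, the hypotheses on $\lambda_i$ and $\lambda$ to discard the $\int_0^t e^{\lambda s}|\Lambda(s)|^2ds$ term, and Lemma \ref{Lf3} to convert to the $\|\cdot\|_q$ norm. The only bookkeeping slip is that the intermediate supremum bound should carry the constant $K_9-1=\frac{4}{2q-\lambda}[\lambda_2\mu_1^{(2q)}+k_1(2\lambda_4\mu_2^{(2q)}+\lambda_5\mu_3^{(2q)})+k_3\lambda_5\mu_3^{(2q)}]$ rather than $K_9$ itself, which then combines with the extra $e^{-\lambda t}\hat{\mathbb{E}}\|\zeta-\xi\|_q^2$ term from Lemma \ref{Lf3} to produce exactly $K_9$.
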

\begin{proof} Keeping in mind the definitions of $\Lambda(t)$,
$\hat{g}(t)$, $\hat{h}(t)$ and $\hat{\gamma}(t)$, we apply the
G-It$\hat{o}$ formula to $e^{\lambda t}|\Lambda(t)|^2$ and then
taking G-expectation on both sides to obtain
\begin{equation}\begin{split}\label{18}
\hat{\mathbb{E}}[\sup_{0<s\leq t}e^{\lambda t}|\Lambda(s)|^2]&\leq
\hat{\mathbb{E}}|\zeta(0)-\xi(0)|^2+\hat{\mathbb{E}}[\sup_{0<s\leq
t}\int_0^te^{\lambda
s}[\lambda|\Lambda(s)|^2+2\Lambda^{\tau}(s)\hat{g}(s)]ds]\\&+
\hat{\mathbb{E}}[\sup_{0<s\leq t}\int_0^te^{\lambda
s}[2\Lambda^{\tau}(s)\hat{h}(s)+|\hat{\gamma}(s)|^2]d \langle B, B
\rangle (s)]\\&+2\hat{\mathbb{E}}[\sup_{0<s\leq t}\int_0^t
e^{\lambda s}\Lambda^{\tau}(s)\gamma(X_s)dB(s)].
\end{split}\end{equation}
By using \eqref{c1} and then lemma \ref{Lf2} we have
\begin{equation*}\begin{split}
&\hat{\mathbb{E}}[\sup_{0<s\leq t}\int_0^te^{\lambda
s}[\lambda|\Lambda(s)|^2+2\Lambda^{\tau}(s)\hat{g}(s)]ds]\\&
\leq\frac{2\lambda_2\mu_1^{(2q)}}{2q-\lambda}\hat{\mathbb{E}}\|\zeta-\xi\|_q^2
- (2\lambda_1-\lambda-2\lambda_2\mu_1^{(2q)})
\hat{\mathbb{E}}\int_0^te^{\lambda s}|\Lambda(s)|^2ds.
\end{split}\end{equation*}
By using \eqref{c2}, \eqref{c3}, lemma \ref{Lf2} and lemma \ref{l4},
there exists a positive constant $k_1$ such that
\begin{equation*}\begin{split}
 &\hat{\mathbb{E}}[\sup_{0<s\leq
t}\int_0^te^{\lambda
s}[2\Lambda^{\tau}(s)\hat{h}(s)+|\hat{\gamma}(s)|^2]d \langle B, B
\rangle (s)]\\& \leq
-2k_1\lambda_3\hat{\mathbb{E}}\int_0^te^{\lambda s} |\Lambda(s)|^2ds
+2k_1\lambda_4\hat{\mathbb{E}}\int_0^t\int_{-\infty}^0e^{\lambda
s}|\Lambda(s+\alpha)|^2\mu_2(d\alpha)ds\\&
+k_1\lambda_5\hat{\mathbb{E}}\int_0^t\int_{-\infty}^0e^{\lambda
s}|\Lambda(s+\alpha)|^2\mu_3(d\alpha)ds\\&
\leq\frac{1}{2q-\lambda}k_1(2\lambda_4\mu_2^{(2q)}+\lambda_5\mu_3^{(2q)})\hat{\mathbb{E}}\|\zeta-\xi\|_q^2
-(2k_1\lambda_3-2k_1\lambda_4\mu_2^{(2q)}-\lambda_5k_1\mu_3^{(2q)} )
\hat{\mathbb{E}}\int_0^te^{\lambda s} |\Lambda(s)|^2ds.
\end{split}\end{equation*}
By utilizing \eqref{6}, the inequality
$a_1a_2\leq\frac{1}{2}\sum_{i=1}^2a_i$ and lemma \ref{l3}, there
exists a positive constant $k_2$ such that
\begin{equation*}\begin{split}
&2\hat{\mathbb{E}}[\sup_{0<s\leq t}\int_0^t e^{\lambda
s}\Lambda^{\tau}(s)\hat{\gamma}(X_s)dB(s)] \leq
2k_2\hat{\mathbb{E}}(\int_0^t e^{\lambda s}|\Lambda(s)|^2e^{\lambda
s}|\hat{\gamma}(X_s)|^2ds)^\frac{1}{2}\\& \leq
\hat{\mathbb{E}}(\sup_{0<s\leq t}e^{\lambda
s}|F(s)|^2)^\frac{1}{2}(4k^2_2\int_0^t e^{\lambda
s}|\hat{\gamma}(X_s)|^2ds)^\frac{1}{2}\\& \leq
\frac{1}{2}\hat{\mathbb{E}}(\sup_{0<s\leq t}e^{\lambda
s}|\Lambda(s)|^2)+2k^2_2\lambda_5\hat{\mathbb{E}}\int_0^t
\int_{-\infty}^0e^{\lambda s}|\Lambda(s+\alpha)|^2\mu_3(d\alpha)ds,
\end{split}\end{equation*}
by using lemma \ref{Lf2}, we get
\begin{equation}\begin{split}\label{16}
&2\hat{\mathbb{E}}[\sup_{0<s\leq t}\int_0^t e^{\lambda
s}\Lambda^{\tau}(s)\hat{\gamma}(X_s)dB(s)]\\& \leq
\frac{1}{2}\hat{\mathbb{E}}(\sup_{0<s\leq t}e^{\lambda
s}|\Lambda(s)|^2)
+\frac{2k_3\lambda_5}{2q-\lambda}\mu_3^{(2q)}\hat{\mathbb{E}}\|\zeta-\xi\|_q^2
+2k_3\lambda_5\mu_3^{(2q)} \hat{\mathbb{E}}\int_0^t e^{\lambda
s}|X(s)|^2ds,
\end{split}\end{equation}
where $k_3=k^2_2$. Substituting all the above derived inequalities
in \eqref{18} we derive
\begin{equation*}\begin{split}
&\hat{\mathbb{E}}[\sup_{0<s\leq t}e^{\lambda t}|\Lambda(t)|^2] \leq
\frac{1}{2q-\lambda}[2\lambda_2\mu_1^{(2q)}
+k_1(2\lambda_4\mu_2^{(2q)}+\lambda_5\mu_3^{(2q)})+2k_3\lambda_5\mu_3^{(2q)}]\hat{\mathbb{E}}\|\zeta-\xi\|_q^2\\&
- [2\lambda_1-\lambda-2\lambda_2\mu_1^{(2q)}
+2k_1\lambda_3-2k_1\lambda_4\mu_2^{(2q)}-\lambda_5k_1\mu_3^{(2q)}-2k_3\lambda_5\mu_3^{(2q)}
] \hat{\mathbb{E}}\int_0^te^{\lambda s}|\Lambda(s)|^2ds\\&
+\frac{1}{2}\hat{\mathbb{E}}(\sup_{0<s\leq t}e^{\lambda
s}|\Lambda(s)|^2),
\end{split}\end{equation*}
simplification yields
\begin{equation*}\begin{split}
&\hat{\mathbb{E}}[\sup_{0<s\leq t}e^{\lambda t}|\Lambda(t)|^2]\leq
\frac{2}{2q-\lambda}[2\lambda_2\mu_1^{(2q)}
+2k_1(2\lambda_4\mu_2^{(2q)}+\lambda_5\mu_3^{(2q)})+2k_3\lambda_5\mu_3^{(2q)}]\hat{\mathbb{E}}\|\zeta-\xi\|_q^2\\&
- 2[2\lambda_1-\lambda-2\lambda_2\mu_1^{(2q)}
+2k_1\lambda_3-2k_1\lambda_4\mu_2^{(2q)}-(k_1+2k_3)\mu_3^{(2q)}\lambda_5
] \hat{\mathbb{E}}\int_0^te^{\lambda s}|\Lambda(s)|^2ds.
\end{split}\end{equation*}
By using the assumptions $2\lambda_1>2\lambda_2\mu_1^{(2q)}
-2k_1\lambda_3+2k_1\lambda_4\mu_2^{(2q)}+(k_1+2k_3)\mu_3^{(2q)}\lambda_5$
and $\lambda\in(0,(2\lambda_1-2\lambda_2\mu_1^{(2q)}
+2k_1\lambda_3-2k_1\lambda_4\mu_2^{(2q)}-(k_1+2k_3)\mu_3^{(2q)}\lambda_5)\wedge
2q)$, we get
\begin{equation}\begin{split}\label{19}
\hat{\mathbb{E}}[\sup_{0<s\leq t}e^{\lambda s}|\Lambda(s)|^2]&\leq
\frac{2}{2q-\lambda}[2\lambda_2\mu_1^{(2q)}
+2k_1(2\lambda_4\mu_2^{(2q)}+\lambda_5\mu_3^{(2q)})+2k_3\lambda_5\mu_3^{(2q)}]\hat{\mathbb{E}}\|\zeta-\xi\|_q^2\\&
\end{split}\end{equation}
In view of lemma \ref{Lf3}, we have
\begin{equation*}\begin{split}
\hat{\mathbb{E}}\|X_t^{\zeta}-Y_t^{\xi}\|_q^2&\leq e^{-\lambda
t}\hat{\mathbb{E}}\|\zeta-\xi\|_q^2 +e^{-\lambda
t}\hat{\mathbb{E}}(\sup_{0<s\leq t}e^{\lambda s}|\Lambda(s)|^2),
\end{split}\end{equation*}
on substituting \eqref{19} in the above inequality, we derive
\begin{equation*}\begin{split}
&\hat{\mathbb{E}}\|X_t^{\zeta}-Y_t^{\xi}\|_q^2\\&\leq e^{-\lambda
t}\hat{\mathbb{E}}\|\zeta-\xi\|_q^2
+\frac{4}{2q-\lambda}[\lambda_2\mu_1^{(2q)}
+k_1(2\lambda_4\mu_2^{(2q)}+\lambda_5\mu_3^{(2q)})+k_3\lambda_5\mu_3^{(2q)}]\hat{\mathbb{E}}\|\zeta-\xi\|_q^2e^{-\lambda
t}\\&=K_9\hat{\mathbb{E}}\|\zeta-\xi\|_q^2e^{-\lambda t},
\end{split}\end{equation*}
where $K_9=1+\frac{4}{2q-\lambda}[\lambda_2\mu_1^{(2q)}
+k_1(2\lambda_4\mu_2^{(2q)}+\lambda_5\mu_3^{(2q)})+k_3\lambda_5\mu_3^{(2q)}]$.
The proof stands completed.
\end{proof}
\begin{rem} Theorem \ref{thm4} indicates that two distinct solution
maps $X_t$ and $Y_t$  from the respective distinct initial data
$\zeta\in C_q((-\infty,0];\R^d)$ and $\xi\in C_q((-\infty,0];\R^d)$
are convergent.
\end{rem}
\section{The exponential estimate} To establish the exponential estimate, assume that equation \eqref{1} with initial
data $\zeta\in C_q((-\infty,0];\R^d)$ has a unique solution $X(t)$
on $t\in[0,\infty)$. First, we derive the $L^2_G$ and then the
exponential estimates as follows.
\begin{thm} Let $E\|\zeta\|^2_q<\infty$ and assumption $A_1$ hold.
Then for all $t\geq 0$,
 \begin{equation*}
\hat{\mathbb{E}}\Big[\sup_{-\infty< s\leq t}|X(t)|^2\Big]\leq
[\hat{\mathbb{E}}\|\zeta\|^2_q+L_1]e^{L_2t},\end{equation*} where
$L_1=\hat{K} +
\frac{2}{q}[q+\lambda_2\mu_1^{(2q)}+k_1(\lambda_5\mu_3^{(2q)}+\mu_2^{(2q)})+2k_3\lambda_5\mu_3^{(2q)}]\hat{\mathbb{E}}\|\zeta\|^2_q$,
$\hat{K}=2[|g(0)|^2+k_1(|h(0)|^2+2|\gamma(0)|^2)+4k_3|\gamma(0)|^2]T$
and
$L_2=2[2\lambda_2-2\lambda_1+1+k_1(2\lambda_5-2\lambda_3+3)+4k_3\lambda_5]$.
\end{thm}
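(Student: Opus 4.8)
The plan is to apply the G-It\^o formula to $|X(t)|^2$ and then control each resulting term by the one-sided dissipativity bounds supplied by $A_1$. Writing
\begin{equation*}
|X(t)|^2 = |X(0)|^2 + \int_0^t 2X^\tau(s)g(X_s)\,ds + \int_0^t\big[2X^\tau(s)h(X_s)+|\gamma(X_s)|^2\big]\,d\langle B,B\rangle(s) + \int_0^t 2X^\tau(s)\gamma(X_s)\,dB(s),
\end{equation*}
I would take $\sup_{0<s\le t}$ and then the G-expectation, so the left-hand side becomes $\hat{\mathbb{E}}[\sup_{0<s\le t}|X(s)|^2]$ and the four terms on the right are estimated separately. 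The three functional bounds I need are exactly those obtained by setting $\varphi=0$ in \eqref{c1}--\eqref{c3} together with Lemma \ref{l7}, i.e.\ the estimates \eqref{4}, \eqref{5}, \eqref{6} for $X^\tau g$, $X^\tau h$ and $|\gamma|^2$.

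For the drift term I would insert \eqref{4}; for the quadratic-variation term I would first pass from $d\langle B,B\rangle$ to $ds$ by Lemma \ref{l4} (this produces the constant $k_1$) and then insert \eqref{5} and \eqref{6}. Each of these bounds leaves a memory integral $\int_0^t\int_{-\infty}^0|X(s+\alpha)|^2\mu_i(d\alpha)\,ds$, which I would convert into an ordinary time integral of $|X|^2$ plus a multiple of $\|\zeta\|_q^2$ by invoking \eqref{fn} of Lemma \ref{Lf2}. This is the mechanism that generates the $\mu_i^{(2q)}\|\zeta\|_q^2$ contributions appearing in $L_1$ and collects the $-2\lambda_1$, $2\lambda_2$ and $k_1(\dots)$ coefficients into the rate $L_2$.

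The delicate step, and the one I expect to be the main obstacle, is the martingale term $2\hat{\mathbb{E}}[\sup_{0<s\le t}\int_0^s X^\tau(u)\gamma(X_u)\,dB(u)]$, since a naive bound reproduces a $\sup$ on the right that cannot be discarded. Here I would apply the Burkholder--Davis--Gundy-type estimate of Lemma \ref{l3} (with $p=2$) to bound it by $2k_2\hat{\mathbb{E}}[(\int_0^t|X(s)|^2|\gamma(X_s)|^2\,ds)^{1/2}]$, factor out $\sup_{0<s\le t}|X(s)|^2$ from the integrand, and then use the weighted Young inequality $2ab\le \tfrac12 a^2+2k_2^2 b^2$ to split off $\tfrac12\hat{\mathbb{E}}[\sup_{0<s\le t}|X(s)|^2]$. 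This half-term is absorbed into the left-hand side, while the remaining $2k_3\hat{\mathbb{E}}\int_0^t|\gamma(X_s)|^2\,ds$ (with $k_3=k_2^2$) is again controlled by \eqref{6} and Lemma \ref{Lf2}, contributing the $4k_3\lambda_5$ term to $L_2$ and the $|\gamma(0)|^2$ and $k_3\lambda_5\mu_3^{(2q)}\|\zeta\|_q^2$ terms to $\hat{K}$ and $L_1$.

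After collecting constants, all of this yields an inequality of the form $\hat{\mathbb{E}}[\sup_{0<s\le t}|X(s)|^2]\le (\hat{\mathbb{E}}\|\zeta\|_q^2+L_1)+L_2\int_0^t\hat{\mathbb{E}}[\sup_{0<u\le s}|X(u)|^2]\,ds$, where I have used $\int_0^t|X(s)|^2\,ds\le\int_0^t\sup_{0<u\le s}|X(u)|^2\,ds$ and $\hat{\mathbb{E}}|X(0)|^2\le\hat{\mathbb{E}}\|\zeta\|_q^2$. The Gronwall inequality then gives $\hat{\mathbb{E}}[\sup_{0<s\le t}|X(s)|^2]\le(\hat{\mathbb{E}}\|\zeta\|_q^2+L_1)e^{L_2 t}$. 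Finally, since $X(s)=\zeta(s)$ for $s\le 0$, the supremum over the history is already accounted for by the term $\hat{\mathbb{E}}\|\zeta\|_q^2$ through Lemma \ref{Lf3}, which upgrades the estimate on $[0,t]$ to the claimed bound over $(-\infty,t]$.
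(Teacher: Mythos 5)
Your proposal follows essentially the same route as the paper's own proof: the same G-It\^o decomposition of $|X(t)|^2$, the dissipativity bounds \eqref{4}, \eqref{5}, \eqref{6} obtained from $A_1$, Lemma \ref{l4} to pass from $d\langle B,B\rangle(s)$ to $ds$ (producing $k_1$), Lemma \ref{l3} plus the Young-type splitting $\tfrac{1}{2}\hat{\mathbb{E}}[\sup_{0<s\leq t}|X(s)|^2]+2k_2^2(\cdots)$ for the stochastic integral (with $k_3=k_2^2$), Lemma \ref{Lf2} for the memory integrals, and Gronwall. The only cosmetic difference is the order of the last two steps: the paper first adds $\hat{\mathbb{E}}\|\zeta\|_q^2$ to pass to $\sup_{-\infty<s\leq t}$ and then applies Gronwall, which yields exactly the stated constant $[\hat{\mathbb{E}}\|\zeta\|_q^2+L_1]e^{L_2 t}$, whereas your order (Gronwall on $[0,t]$ first, history added afterwards) leaves a harmless extra additive $\hat{\mathbb{E}}\|\zeta\|_q^2$ outside the exponential.
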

\begin{proof} Applying the G-It$\hat{o}$ formula to $|X(t)|^2$ and taking the
G-expectation on both sides
\begin{equation}\begin{split}\label{20}
\hat{\mathbb{E}}\Big[\sup_{0\leq s\leq t}|X(t)|^2\Big]&\leq
\hat{\mathbb{E}}|X(0)|^2+2\hat{\mathbb{E}}\Big[\sup_{0\leq s\leq
t}\int_0^{t}X^\tau(s)
g(X_s)ds\Big]\\&+\hat{\mathbb{E}}\Big[\sup_{0\leq s\leq
t}\int_0^{t}(2X^\tau(s) h(X_s)+|\gamma(X_s)|^2) d \langle B, B
\rangle (s)\Big]\\&+2\hat{\mathbb{E}}\Big[\sup_{0\leq s\leq
t}\int_0^t X^{\tau}(s)\gamma(X_s)dB(s)\Big]
\end{split}\end{equation}
By using \eqref{4} and then \eqref{f1} we obtain
\begin{equation*}\begin{split}&2\hat{\mathbb{E}}\Big[\sup_{0<s\leq t}\int_0^t X^{\tau}(s)g(X_s)ds\Big]
\leq |g(0)|^2T+
\frac{\lambda_2}{q}\hat{\mathbb{E}}\|\zeta\|^2_q\mu_1^{(2q)}
+(2\lambda_2-2\lambda_1+1)\hat{\mathbb{E}}\int_{0}^{t}|X(s)|^2ds.
\end{split}\end{equation*}
By using \eqref{5}, \eqref{6}, \eqref{f1} and lemma \ref{l4}, there
exists a positive constant $k_1$ such that
\begin{equation*}\begin{split}
&\hat{\mathbb{E}}\Big[\sup_{0\leq s\leq t}\int_0^{t}(2X^\tau(s)
h(X_s)+|\gamma(X_s)|^2) d \langle B, B \rangle (s)\Big]\leq
k_1\hat{\mathbb{E}}\Big[\int_0^{t}(2X^\tau(s)
h(X_s)+|\gamma(X_s)|^2)\Big] ds
\\&\leq k_1[|h(0)|^2+2|\gamma(0)|^2]T+
k_1\frac{1}{q}(\lambda_5\mu_3^{(2q)}+\mu_2^{(2q)})\hat{\mathbb{E}}\|\zeta\|^2_q\\&
+k_1(2\lambda_5-2\lambda_3+3)\hat{\mathbb{E}}\int_{0}^{t}|X(s)|^2ds.
\end{split}\end{equation*}
By utilizing \eqref{6}, the inequality
$a_1a_2\leq\frac{1}{2}\sum_{i=1}^2a_i$ and lemma \ref{l3},
straightforward calculations give
\begin{equation*}\begin{split}
&2\hat{\mathbb{E}}\Big[\sup_{0<s\leq t}\int_0^t
X^{\tau}(s)\gamma(X_s)dB(t)\Big] \leq
2k_2\hat{\mathbb{E}}\Big[\int_0^t
|X^{\tau}(s)\gamma(X_s)|^2ds\Big]^\frac{1}{2}\\&
\leq\frac{1}{2}\hat{\mathbb{E}}\Big[\sup_{0<s\leq t}|X(s)|^2\Big]
+4k^2_2|\gamma(0)|^2T +4k^2_2\lambda_5\hat{\mathbb{E}}\int_0^t
\int_{-\infty}^0|X(s+\alpha)|^2\mu_3(d\alpha)ds,
\end{split}\end{equation*}
by using lemma \ref{Lf2}, we get
\begin{equation}\begin{split}\label{16}
&2E\Big[\sup_{0<s\leq t}\int_0^t X^{\tau}(s)\gamma(X_s)dB(s)\Big]\\&
\leq \frac{1}{2}\hat{\mathbb{E}}\Big[\sup_{0<s\leq t}|X(s)|^2\Big]
+4k_3|\gamma(0)|^2T
+\frac{4k_3\lambda_5}{2q}\mu_3^{(2q)}\hat{\mathbb{E}}\|\zeta\|_q^2
+4k_3\lambda_5 \hat{\mathbb{E}}\int_0^t |X(s)|^2ds,
\end{split}\end{equation}
where $k_3=k^2_2$. Substituting the above inequalities in \eqref{20}
and then by straightforward calculations, we derive
\begin{equation}\begin{split}\label{fn1}
\hat{\mathbb{E}}[\sup_{0\leq s\leq t}|X(t)|^2]&\leq L_1
+L_2\int_{0}^{t}\hat{\mathbb{E}}\Big[\sup_{0\leq s\leq
t}|X(s)|^2\Big]ds,
\end{split}\end{equation}
where $L_1=\hat{K} +
\frac{2}{q}[q+\lambda_2\mu_1^{(2q)}+k_1(\lambda_5\mu_3^{(2q)}+\mu_2^{(2q)})+2k_3\lambda_5\mu_3^{(2q)}]E\|\zeta\|^2_q$,
$\hat{K}=2\Big[|g(0)|^2+k_1(|h(0)|^2+2|\gamma(0)|^2)+4k_3|\gamma(0)|^2\Big]T$
and
$L_2=2[2\lambda_2-2\lambda_1+1+k_1(2\lambda_5-2\lambda_3+3)+4k_3\lambda_5]$.
By noticing that
\begin{equation*}
\hat{\mathbb{E}}[\sup_{-\infty< s\leq t}|X(s)|^2\Big]\leq
\hat{\mathbb{E}}\|\zeta\|_q^2+ \hat{\mathbb{E}}\Big[\sup_{0\leq
s\leq t}|X(s)|^2],
\end{equation*}
it follows
\begin{equation*}\begin{split}
\hat{\mathbb{E}}\Big[\sup_{-\infty< s\leq t}|X(s)|^2\Big]& \leq
\hat{\mathbb{E}}\|\zeta\|_q^2+ L_1
+L_2\int_{0}^{t}\hat{\mathbb{E}}\Big[\sup_{0\leq s\leq
t}|X(s)|^2\Big]ds \\&\leq \hat{\mathbb{E}}\|\zeta\|_q^2+ L_1
+L_2\int_{0}^{t}\hat{\mathbb{E}}\Big[\sup_{-\infty< s\leq
t}|X(s)|^2\Big]ds.
\end{split}\end{equation*}
By applying the Grownwall inequality, we get the desired expression.
\end{proof}
\begin{thm} Let $\hat{\mathbb{E}}\|\zeta\|^2_q<\infty$ and assumption $A_1$ hold.
Then for all $t\geq 0$,
\begin{equation*}
\lim_{t\rightarrow\infty}\sup \frac{1}{t}log|X(t)|\leq M,
\end{equation*}
where
$M=2\lambda_2-2\lambda_1+1+k_1(2\lambda_5-2\lambda_3+3)+4k_3\lambda_5$.
\end{thm}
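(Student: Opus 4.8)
The plan is to upgrade the $L^2_G$ moment bound of the previous theorem into a quasi-sure pathwise growth rate via a Borel--Cantelli argument adapted to the G-capacity. First I would invoke the preceding theorem, which supplies a constant $C=\hat{\mathbb{E}}\|\zeta\|_q^2+L_1$ with
\begin{equation*}
\hat{\mathbb{E}}\Big[\sup_{-\infty<s\le t}|X(s)|^2\Big]\le Ce^{L_2t},\qquad t\ge0,
\end{equation*}
and note that $L_2=2M$, so it suffices to prove $\limsup_{t\to\infty}\frac1t\log|X(t)|\le L_2/2$ quasi-surely.

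Next I would discretise time. Writing $Z_n=\sup_{n\le t\le n+1}|X(t)|^2$, restricting the supremum in the displayed bound to $[n,n+1]$ gives $\hat{\mathbb{E}}[Z_n]\le Ce^{L_2(n+1)}$. Fix an arbitrary $\ep>0$ and apply the G-Markov inequality of Lemma~\ref{l2} with $p=1$ to the nonnegative variable $Z_n$ at the threshold $\delta=e^{(L_2+\ep)n}$, obtaining
\begin{equation*}
\hat{C}\Big(Z_n>e^{(L_2+\ep)n}\Big)\le \frac{\hat{\mathbb{E}}[Z_n]}{e^{(L_2+\ep)n}}\le Ce^{L_2}\,e^{-\ep n}.
\end{equation*}
Summing a geometric series yields $\sum_{n\ge1}\hat{C}(Z_n>e^{(L_2+\ep)n})<\infty$.

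The key step is the Borel--Cantelli lemma for the G-capacity: since $\hat{C}=\sup_{\mathbb{P}\in\mathcal{P}}\mathbb{P}$ is countably subadditive, $\hat{C}(\bigcup_{n\ge N}A_n)\le\sum_{n\ge N}\hat{C}(A_n)\to0$, so the summability above forces $\hat{C}(\limsup_n A_n)=0$ for $A_n=\{Z_n>e^{(L_2+\ep)n}\}$. Hence quasi-surely there is a random $n_0$ with $\sup_{n\le t\le n+1}|X(t)|^2\le e^{(L_2+\ep)n}$ for all $n\ge n_0$. For $t\in[n,n+1]$ with $n\ge n_0$ this gives $\frac1t\log|X(t)|\le\frac{(L_2+\ep)n}{2t}$, and letting $t\to\infty$ (so $n/t\to1$) we obtain $\limsup_{t\to\infty}\frac1t\log|X(t)|\le(L_2+\ep)/2$ quasi-surely.

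The main obstacle I anticipate is purely the capacity-theoretic bookkeeping rather than any new estimate. One must check that the classical Borel--Cantelli lemma survives in the sublinear setting, which rests on the countable subadditivity of $\hat{C}$ (a consequence of representing it as a supremum of the genuinely countably additive measures in $\mathcal{P}$), and one must ensure that the polar exceptional set does not depend on $\ep$. This last point is handled by running the argument along $\ep=1/k$, $k\in\N$, and intersecting the countably many sets of capacity zero so obtained; on the resulting full-capacity set one lets $\ep\downarrow0$ to conclude $\limsup_{t\to\infty}\frac1t\log|X(t)|\le L_2/2=M$, as required.
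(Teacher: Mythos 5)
Your proposal is correct and follows essentially the same route as the paper's own proof: take the exponential $L^2_G$ moment bound from the preceding theorem, discretize time into unit intervals, apply the G-Markov inequality to get summable capacities, invoke Borel--Cantelli to obtain a quasi-sure bound beyond a random index, and let $\ep\downarrow 0$ to conclude $\limsup_{t\to\infty}\frac{1}{t}\log|X(t)|\leq L_2/2=M$. If anything, your write-up is slightly more careful than the paper's, since you justify the Borel--Cantelli step via countable subadditivity of $\hat{C}$ and remove the $\ep$-dependence of the polar exceptional set by intersecting over $\ep=1/k$, points the paper passes over silently.
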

\begin{proof} Applying the Grownwall inequality from \eqref{fn1}, it
follows
\begin{equation}\label{21}
\hat{\mathbb{E}}\Big[\sup_{0\leq s\leq t}|X(s)|^2\Big]\leq
L_1e^{L_2t},\end{equation} where $L_1=\hat{K} +
\frac{2}{q}[q+\lambda_2\mu_1^{(2q)}+k_1(\lambda_5\mu_3^{(2q)}+\mu_2^{(2q)})+2k_3\lambda_5\mu_3^{(2q)}]E\|\zeta\|^2_q$,
$\hat{K}=2\Big[|g(0)|^2+k_1(|h(0)|^2+2|\gamma(0)|^2)+4k_3|\gamma(0)|^2\Big]T$
and
$L_2=2[2\lambda_2-2\lambda_1+1+k_1(2\lambda_5-2\lambda_3+3)+4k_3\lambda_5]$.
By virtue of the above result \eqref{21}, for each $m=1,2,3,...,$ we
have
\begin{equation*}
\hat{\mathbb{E}}\Big[\sup_{m-1\leq t\leq m}|X(t)|^2\Big]\leq
L_1e^{L_2m}.\end{equation*} For any $\epsilon>0$, by using lemma
\ref{l2} we get
\begin{equation*}\begin{split}
\hat{C}\Big\{w:\sup_{m-1\leq t\leq
m}|X(t)|^2>e^{(L_2+\epsilon)m}\Big\}&\leq
\frac{\hat{\mathbb{E}}\Big[\sup_{m-1\leq t\leq
m}|X(t)|^2\Big]}{e^{(L_2+\epsilon)m}}\\&
\leq\frac{L_1e^{L_2m}}{e^{(L_2+\epsilon)m}}\\& =L_1e^{-\epsilon m}.
\end{split}\end{equation*}
But for almost all $w\in\Omega$, the Borel-Cantelli lemma yields
that there exists a random integer $m_0=m_0(w)$ so that
\begin{equation*}
\sup_{m-1\leq t\leq m}|X(t)|^2\leq e^{(L_2+\epsilon)m},\,\,
\textit{whenever}\,\,m\geq m_0,\end{equation*} which implies
\begin{equation*}\begin{split}
\lim_{t\rightarrow\infty}\sup \frac{1}{t}log|X(t)|&\leq
\frac{L_2+\epsilon}{2}\\&=
2\lambda_2-2\lambda_1+1+k_1(2\lambda_5-2\lambda_3+3)+4k_3\lambda_5+\frac{\epsilon}{2},
\end{split}\end{equation*}
but $\epsilon$ is arbitrary and the above result reduces to
\begin{equation*}
\lim_{t\rightarrow\infty}\sup \frac{1}{t}log|X(t)|\leq M,
\end{equation*}
where
$M=2\lambda_2-2\lambda_1+1+k_1(2\lambda_5-2\lambda_3+3)+4k_3\lambda_5$.
The proof is complete.
\end{proof}
\begin{rem}
The above lemma states that the second moment of Lyapunov exponent
\cite{f1,k} $\lim_{t\rightarrow \infty}\sup\frac{1}{t}log|X(t)|$ is
bounded with upper bound M.
\end{rem}
\section{Acknowledgement} This work is supported by the Commonwealth
Scholarship Commission in the United Kingdom with project ID Number:
PKRF-2017-429. My heartfelt gratitude is due to Professor Chenggui
Yuan who not only guided me to embark on this endeavor but also
enriched my work through his insightful input throughout this
research.

\end{document}